\numberwithin{equation}{section}
 \newtheorem{theorem}{Theorem}[section]
\newtheorem{lemma}[theorem]{Lemma}
\newtheorem{remark}[theorem]{Remark}
\begin{document}
\title{A Mini Immersed Finite Element Method for Two-Phase Stokes Problems on Cartesian Meshes}
\author{
Haifeng Ji\footnotemark[1]$~^,$\footnotemark[2] \qquad
Dong Liang\footnotemark[3] 
\qquad
Qian Zhang\footnotemark[4]
}
\footnotetext[1]{School of Science, Nanjing University of Posts and Telecommunications, Nanjing, Jiangsu 210023, China  (hfji1988@foxmail.com)}
\footnotetext[2]{Key Laboratory of NSLSCS, Ministry of Education, Nanjing Normal University, Nanjing, Jiangsu 210023, China}
\footnotetext[3]{Department of Mathematics and Statistics, York University, Toronto, ON M3J 1P3, Canada (dliang@yorku.ca)}
\footnotetext[4]{School of Artificial Intelligence and Information Technology, Nanjing University of Chinese Medicine, Nanjing, Jiangsu 210023, China (380793@njucm.edu.cn)}

\date{}
\maketitle


\begin{abstract}
This paper presents a mini immersed finite element (IFE) method for solving two- and three-dimensional two-phase Stokes problems on Cartesian meshes. The IFE space is constructed from the conventional mini element, with shape functions modified  on interface elements according to interface jump conditions, while keeping the degrees of freedom unchanged. Both discontinuous viscosity coefficients and surface forces are taken into account in the construction. The interface is approximated using discrete level set functions, and explicit formulas for IFE basis functions and correction functions are derived, facilitating ease of implementation.The inf-sup stability and the optimal a priori error  estimate of the IFE method, along with the optimal approximation capabilities of the IFE space, are derived rigorously, with constants that are independent of the mesh size and the manner in which the interface intersects the mesh, but may depend on the discontinuous viscosity coefficients.  Additionally, it is proved that the condition number has the usual bound independent of the interface. Numerical experiments are provided to confirm the theoretical results.
\end{abstract}

\textbf{Keywords.}
Stokes equations, interface, immersed finite element, unfitted mesh, mini element

\textbf{AMS subject classifications.}
65N15, 65N30, 65N12, 76D07

\section{Introduction} 
In this paper we are interested in designing and analyzing an immersed finite element (IFE) method for solving two-phase Stokes problems (also known as the Stokes interface problem in the literature).
Let $\Omega\subset\mathbb{R}^N$, $N=2, 3$,  be the computational domain and $\Gamma$ be a $C^2$-smooth closed hypersurface  immersed in $\Omega$. The interface $\Gamma$  divides $\Omega$ into two phases $\Omega^+$ and $\Omega^-$. Without loss of generality, we assume that $\Omega^-$ is the inclusion, i.e., $\Gamma=\partial\Omega^-$. 
The Stokes interface problem reads as follows: Given a body force $\mathbf{f}\in L^2(\Omega)^N$, a surface force $\mathbf{g}\in L^2(\Gamma)^N$ and a piecewise constant viscosity $\mu|_{\Omega^\pm}=\mu^\pm>0$,  find a velocity $\mathbf{u}$ and a pressure $p$ such that 
\begin{subequations}\label{originalpb0}
\begin{align}
-\nabla\cdot (2\mu\boldsymbol{\epsilon}(\mathbf{u})) +\nabla p&=\mathbf{f}  \qquad\mbox{in } \Omega^+\cup\Omega^-,\label{originalpb1}\\
\nabla\cdot \mathbf{u}&=0\qquad\mbox{in } \Omega,\label{originalpb2}\\
[\sigma(\mu,\mathbf{u},p)\mathbf{n}]_\Gamma&=\mathbf{g}\qquad\mbox{on } \Gamma,\label{jp_cond1}\\
[\mathbf{u}]_\Gamma&=\mathbf{0}\qquad\mbox{on } \Gamma,\label{jp_cond2}\\
\mathbf{u}&=\mathbf{0} \qquad\mbox{on } \partial\Omega,\label{originalpb5}
\end{align}
\end{subequations}
where  $\boldsymbol{\epsilon}(\mathbf{u})=\frac{1}{2}(\nabla \mathbf{u}+(\nabla \mathbf{u})^T)$ represents the strain tensor, $\sigma(\mu,\mathbf{u},p)=2\mu \boldsymbol{\epsilon}(\mathbf{u})-p\mathbb{I}$ denotes  the total stress tensor,  $\mathbb{I}$ is the identity matrix, $\mathbf{n}$ is the unit normal to  $\Gamma$ pointing from $\Omega^-$ to $\Omega^+$, and $[\mathbf{v}]_\Gamma$ stands for  the jump of $\mathbf{v}$ across $\Gamma$, i.e., $[\mathbf{v}]_\Gamma:=\mathbf{v}^+|_\Gamma-\mathbf{v}^-|_\Gamma$ with $\mathbf{v}^\pm:=\mathbf{v}|_{\Omega^\pm}$.  If the trace $(\nabla \cdot \mathbf{u})|_{\Gamma}$ is well defined, (\ref{originalpb2}) provides an additional  relationship
\begin{equation}\label{jp_cond3}
[\nabla \cdot \mathbf{u}]_\Gamma=0\quad\mbox{ on }\Gamma.
\end{equation}
With the usual  spaces $\boldsymbol{V}=H_0^1(\Omega)^N$  and $Q=L_0^2(\Omega):=\{q\in L^2(\Omega) : \int_\Omega q=0\}$, the weak formulation of (\ref{originalpb0}) reads : Find $(\mathbf{u}, p)\in(\boldsymbol{V}, Q)$  such that
\begin{equation}\label{weakform}
a(\mathbf{u}, \mathbf{v})+b(\mathbf{v}, p)-b(\mathbf{u}, q) = l(\mathbf{v})\quad \forall (\mathbf{v},q) \in (\boldsymbol{V}, Q),
\end{equation}
where
$a(\mathbf{u}, \mathbf{v}):=\int_\Omega2\mu\boldsymbol{\epsilon}(\mathbf{u}):\boldsymbol{\epsilon}(\mathbf{v})$, $b(\mathbf{v}, q):=-\int_\Omega q\nabla\cdot \mathbf{v}$ and $l(\mathbf{v}):=\int_\Omega \mathbf{f}\cdot\mathbf{v}-\int_{\Gamma}\mathbf{g}\cdot\mathbf{v}.$ The well-posedness of this weak formulation can be found in \cite{gross2011numerical}.

An important motivation for investigating the Stokes interface problem comes from two-phase incompressible flows (see \cite{kirchhart2016analysis, olshanskii2006analysis, gross2011numerical} and the references therein). For such problems involving interfaces, numerical methods based on fixed background meshes independent of interfaces (called unfitted meshes) 
have attracted a lot of attention because of the relative ease of  handling complex or  moving interfaces, especially in three dimensions. 
Since the interface is not aligned with the  mesh, it can arbitrarily cut  some  elements (called interface elements). 
To achieve a stable and accurate finite element (FE) method, specific efforts are needed for these interface elements. 
Roughly speaking, there are two ways to develop unfitted mesh FE methods with optimal convergences. 
One approach enriches the conventional FE space with extra degrees of freedom on interface elements to capture the discontinuities (see, e.g., \cite{hansbo2014cut, cattaneo2015stabilized, kirchhart2016analysis, guzman2018inf, caceres2020new}). 
The other approach modifies the traditional FE space according to interface conditions to capture the behavior of the exact solution, while keeping the degrees of freedom unchanged. The immersed finite element (IFE) method, originally introduced by Li \cite{li1998immersed} for one-dimensional interface problems, follows the latter approach and has become an efficient method for solving interface problems.
A significant advantage of the IFE method, compared to other unfitted mesh FE methods, is that the IFE space is isomorphic to the traditional FE space on the same unfitted mesh, regardless of the interface position. Thus, the structure of the resulting linear systems remains unchanged when solving moving interface problems.
For second-order elliptic interface problems, IFE methods have been extensively studied (see, e.g., \cite{li2004immersed,he2012convergence,taolin2015siam,GuzmanJSC2017,Guojcp2020,2021ji_IFE,ji3Dnonconforming}).
However, for the Stokes interface problems, the development and analysis of IFE methods are more challenging due to the coupling between velocity and pressure in the interface conditions and the divergence-free equation.
The first IFE method for the Stokes interface problems was developed by Adjerid et al. \cite{adjerid2015immersed}, considering the coupling of velocity and pressure  in the construction of IFE spaces.  Since then, various IFEs have been developed, such as the immersed $CR$--$P_0$ element  \cite{jones2021class}, the immersed rotated $Q_1$--$Q_0$  element  \cite{jones2021class}, and the Taylor--Hood IFE \cite{chen2021p2}. Very recently, Ji et al. \cite{2021ji_IFE_stoke} provide a theoretical analysis of an IFE method based on the immersed $CR$--$P_0$ element. However, to the best of our knowledge, the existing IFEs for the Stokes interface problems are restricted to 2D, and there is no theoretical analysis for IFEs with surface forces (i.e., $\mathbf{g}\not=\mathbf{0}$). 
One major obstacle is that the velocity and the pressure are also coupled in IFE spaces, and this becomes more complicated in 3D.

For the Stokes problems, the so-called mini element, developed by Arnold et al. \cite{arnold1984stable}, is very popular because it is stable, economical, and easy to implement.
In this paper, we propose and analyze an IFE variant of the mini element in 2D and 3D for solving the Stokes interface problems on Cartesian meshes. 
Compared with the conventional mini element method, the new IFE method requires only some modifications near the interface, thus the additional computational cost is low.
On each interface element, we first introduce some discrete interface conditions on approximate interfaces according to the exact interface conditions, and then modify the shape functions of the mini element to ensure these discrete interface conditions are satisfied. 
It should be emphasized that in this paper, we use discrete level set functions to discretize the interface, unlike traditional IFE methods that use points of intersection of the interface and the edges of elements. Our approach addresses the coplanarity issue encountered in constructing 3D IFE spaces, namely, that the interface cuts through four edges of a tetrahedron and the points of intersection are not coplanar. Our approach is compatible with the well-known level set method \cite{osher2001level,tornberg2000finite}, making the IFE method developed in this paper particularly well-suited for solving complex moving interface problems.

Another contribution of this work is the IFE discretization of the surface force (i.e., $\mathbf{g}\not=\mathbf{0}$), which 
is significant and cannot be neglected in many practical applications, such as simulating a (rising) liquid drop contained in a surrounding fluid \cite{gross2011numerical}. Within the IFE framework, a correction function,  nonzero only on interface elements, is pre-calculated based on  non-homogeneous jump conditions, and is then transferred to the right-hand side of the formulation of IFE methods. 
The attractive feature is that the stiffness matrix is  the same as that of IFE methods for problems with homogeneous jump conditions, and only the right-hand side needs to be modified. Generally, there are two approaches in the literature for constructing the correction function. One approach is based on smoothly extending the non-homogeneous jump functions to the neighborhood of the interface (see, e.g., \cite{ygong-li,ji2018high,zhang2023unfitted}). The other approach  involves using the non-homogeneous jump functions directly and solving a linear system with the same coefficient matrix as that used for IFE basis functions to obtain the correction function (see, e.g., \cite{adjerid2023enriched,adjerid2015immersed,guzman2016higher,He2012Immersed}).
In this paper we adopt the second approach. On each interface element $T$, it is natural to use the quantity $\int_{\Gamma\cap T}\mathbf{g}$ to construct the correction function.
However, we find that $\int_{\Gamma\cap T}\mathbf{g}$ may become unbounded as $|\Gamma\cap T|\rightarrow 0$, even if $\mathbf{g}\in H^{1/2}(\Gamma)$ (see \cite[Example 3.1]{zhang2023unfitted}). This issue can arise because the interface $\Gamma$ cuts the element $T$ arbitrarily.
To remedy this, we construct a larger fictitious box $R_T$ encompassing $T$ and use the quantity  $\int_{\Gamma\cap R_T}\mathbf{g}$ to develop correction functions. The fictitious element idea has been used in \cite{guo2019higher,zhuang2019high}, where a larger triangle $T^\lambda$ similar to $T$ is used to develop high degree IFE methods  for solving 2D interface problems with homogeneous jump conditions.  Our choice of a box $R_T$ simplifies the computation of integrals over $\Gamma\cap R_T$, especially in three dimensions.

As mentioned earlier, analyzing the proposed IFE space and IFE method is challenging due to the coupling between velocities and pressures, along with the presence of surface forces.    
Fortunately, we develop a new method to derive explicit expressions for both the IFE basis functions and the correction functions. These expressions are not only practical for implementation but also useful for our analysis.
Thanks to these explicit forms, we  have successfully proven the optimal approximation capabilities of the IFE space and the correction functions. These explicit expressions also enable us to derive some useful inequalities for the coupled IFE functions, such as the trace inequality and the inverse inequality.
With this preparation, we present a complete analysis of the proposed IFE method, including the inf-sup stability, the optimal a priori error estimate, and the condition number estimate, taking into account the dependency of the interface location relative to the mesh.

The paper is organized as follows. In section~\ref{sec_FEM} we formulate our IFE method. 
In section~\ref{sec_approximation} we prove the optimal approximation capabilities of IFE spaces.
In section~\ref{sec_analysis_IFEM} we present the theoretical analysis of our IFE method. 
Finally, some numerical results are provided in section~\ref{sec_num}.

\section{Finite element discretization}\label{sec_FEM}
\subsection{Unfitted meshes}
Since our focus is on the interface, we simply assume that the computational domain $\Omega$ is rectangular/cubic so that there is a family of  Cartesian triangular/tetrahedral meshes, denoted by $\{\mathcal{T}_h\}$, on $\Omega$. The meshes are not fitted to the interface. For example, in three dimensions ($N=3$), Cartesian tetrahedral meshes are obtained by first dividing  $\Omega$ into  cuboids and then subdividing each cuboid into six tetrahedra in the same manner (see Remark 3 in \cite{kvrivzek1992maximum}). 
For an element $T\in\mathcal{T}_h$ (a triangle for $N=2$ and  a tetrahedron for $N=3$),  $h_T$ denotes its diameter, and for a mesh $\mathcal{T}_h$, the index $h$ refers to the mesh size, i.e., $h=\max_{T\in\mathcal{T}_h}h_T$.  
We assume that $\{\mathcal{T}_h\}$ is shape regular, i.e., for all $\mathcal{T}_h$ and for all $T\in \mathcal{T}_h$, there exists a constant $C$ such that $h_T\leq C\rho_T$, where $\rho_T$ is the diameter of the largest ball inscribed in $T$.

In this paper, face means edge/face in two/three dimensions. We denote the sets of nodes and faces of $\mathcal{T}_h$ by $\mathcal{N}_h$ and $\mathcal{F}_h$, respectively. For each $F\in \mathcal{F}_h$, we use  $h_F$ to denote its diameter.
The sets of interface elements and interface faces are  defined by
$
\mathcal{T}_h^\Gamma =\{T\in\mathcal{T}_h :  T\cap \Gamma\not = \emptyset\}$ and $\mathcal{F}_h^\Gamma=\{F\in \mathcal{F}_h : F \cap \Gamma\not = \emptyset\},
$
where we adopt the convention that elements and faces are open sets.
The sets of non-interface elements and non-interface faces are then $\mathcal{T}^{non}_h=\mathcal{T}_h\backslash\mathcal{T}_h^{\Gamma}$ and $\mathcal{F}^{non}_h=\mathcal{F}_h\backslash\mathcal{F}_h^{\Gamma}$, respectively.

Let us introduce the signed distance function: $d(\mathbf{x})|_{\Omega^\pm}=\pm\mbox{dist}(\mathbf{x},\Gamma)$,
and the $\delta$-neighborhood of $\Gamma$:
$
U(\Gamma,\delta)=\{\mathbf{x}\in\mathbb{R}^N: \mbox{dist}(\mathbf{x},\Gamma)< \delta\},
$
where $\mbox{dist}(\cdot,\cdot)$ is the standard distance function  between two  sets.
It is well known that 
for $\Gamma\in C^2$, there exists $\delta_{0}>0$ such that $d(\mathbf{x})$ belongs to $C^2(U(\Gamma,\delta_0))$ and the closest point mapping $\mathbf{p}: U(\Gamma,\delta_0)\rightarrow \Gamma$ maps every $\mathbf{x}$ to precisely one point  on $\Gamma$ (see \cite{foote1984regularity}). 
Now the unit normal vector $\mathbf{n}=\nabla d$ is well defined in $U(\Gamma,\delta_0)$ and  $\mathbf{n}\in C^1(U(\Gamma,\delta_0))^N$.
We  assume $h< \delta_0$ so that the interface is  resolved by the  mesh in the sense that $\overline{T}\subset U(\Gamma,\delta_0)$ for all $T\in\mathcal{T}_h^\Gamma$.
In the neighborhood of $\Gamma$, we recall the following fundamental result which will be useful in the analysis (see, e.g., \cite{burman2018Acut,Li2010Optimal,elliott2013finite}).
\begin{lemma}\label{lem_strip}
For all $\delta\in (0,\delta_0]$, there is a constant $C$ depending only on $\Gamma$ such  that  
\begin{equation*}
\|v\|^2_{L^2(U(\Gamma,\delta))}\leq C(\delta \|v\|^2_{L^2(\Gamma)}+\delta^2  \|\nabla v\|^2_{L^2(U(\Gamma,\delta))})\qquad\forall  v\in H^1(U(\Gamma,\delta_0)).
\end{equation*}
Furthermore, if $v|_{\Gamma}\not=0$, there holds  
$
\|v\|_{L^2(U(\Gamma,\delta))}\leq C\delta^{1/2} \|\nabla v\|_{H^1(U(\Gamma,\delta_0))}.
$
\end{lemma}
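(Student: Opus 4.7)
The plan is to work in the tubular-neighborhood coordinates supplied by the signed distance function and the closest-point projection. Since $\Gamma\in C^2$ and $\delta\le\delta_0$, the map $\Phi(\mathbf{y},t):=\mathbf{y}+t\,\mathbf{n}(\mathbf{y})$ is a $C^1$-diffeomorphism from $\Gamma\times(-\delta_0,\delta_0)$ onto $U(\Gamma,\delta_0)$, and its Jacobian factors as $J(\mathbf{y},t)=\prod_{i=1}^{N-1}(1+t\kappa_i(\mathbf{y}))$, where $\kappa_i$ are the principal curvatures of $\Gamma$. This product can be pinched between two positive constants $c_1,c_2$ depending only on $\Gamma$. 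This is the only geometric ingredient I will need: once it is in hand, integrals over $U(\Gamma,\delta)$ are comparable, up to a constant depending only on $\Gamma$, to integrals over $\Gamma\times(-\delta,\delta)$ against the surface measure $dS(\mathbf{y})\otimes dt$.

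\textbf{Key step: estimate along normal fibres.} Reducing to $v\in C^\infty$ by density in $H^1(U(\Gamma,\delta_0))$, I set $\tilde v(\mathbf{y},t):=v(\Phi(\mathbf{y},t))$ and apply the one-dimensional fundamental theorem of calculus in $t$,
\begin{equation*}
\tilde v(\mathbf{y},t)=\tilde v(\mathbf{y},0)+\int_0^t\partial_s\tilde v(\mathbf{y},s)\,ds.
\end{equation*}
Squaring, using Cauchy--Schwarz in $s$, and noting $|\partial_s\tilde v(\mathbf{y},s)|=|\nabla v(\Phi(\mathbf{y},s))\cdot\mathbf{n}(\mathbf{y})|\le|\nabla v\circ\Phi|$ yields the pointwise bound
\begin{equation*}
|\tilde v(\mathbf{y},t)|^2\le 2|\tilde v(\mathbf{y},0)|^2+2\delta\int_{-\delta}^{\delta}|\nabla v(\Phi(\mathbf{y},s))|^2\,ds\qquad\text{for }|t|\le\delta.
\end{equation*}
Integrating $t$ over $(-\delta,\delta)$ and $\mathbf{y}$ over $\Gamma$, then transporting both sides back to $U(\Gamma,\delta)$ by the Jacobian bounds $c_1\le J\le c_2$, produces the first inequality with $C=C(\Gamma)$.

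\textbf{Second inequality.} I read the hypothesis as $v|_\Gamma=0$ in the trace sense, interpreting the $\not=$ in the statement as a typo. Then $\tilde v(\mathbf{y},0)=0$, so the fibrewise calculation above gives directly the Poincar\'e-type bound $\|v\|_{L^2(U(\Gamma,\delta))}\le C\delta\,\|\nabla v\|_{L^2(U(\Gamma,\delta))}$. Applying the first inequality component-wise to $\nabla v$ (regarded as an $H^1$-field) together with the trace inequality $\|\nabla v\|_{L^2(\Gamma)}\le C\|\nabla v\|_{H^1(U(\Gamma,\delta_0))}$ then gives the stated $\delta^{1/2}\|\nabla v\|_{H^1(U(\Gamma,\delta_0))}$ factor, the zero-trace hypothesis being used only to cancel the boundary term that would otherwise contribute a larger $\delta^{1/2}\|v\|_{L^2(\Gamma)}$ piece.

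\textbf{Main obstacle.} The only non-routine step is the uniform Jacobian control, namely verifying $c_1\le J(\mathbf{y},t)\le c_2$ on all of $\Gamma\times(-\delta_0,\delta_0)$. This is exactly where the $C^2$ regularity of $\Gamma$ and the smallness of $\delta_0$ relative to the reach of $\Gamma$ enter in a genuine way, and it is the one computation that I would carry out explicitly (the formula for $J$ in terms of the principal curvatures, together with $|t|\,|\kappa_i|\le\delta_0\|\kappa_i\|_{L^\infty(\Gamma)}\le 1/2$ for $\delta_0$ small enough). Everything else is a one-dimensional calculus argument performed fibre by fibre, together with a Fubini step.
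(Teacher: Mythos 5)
Your proof of the first inequality is correct, and it is essentially the standard argument behind this result (the paper itself gives no proof of Lemma~\ref{lem_strip}, only a pointer to the literature): pass to normal coordinates via $\Phi(\mathbf{y},t)=\mathbf{y}+t\,\mathbf{n}(\mathbf{y})$, pinch the Jacobian $\prod_i(1+t\kappa_i)$ between two positive constants using $\Gamma\in C^2$ and the smallness of $\delta_0$, and run the fundamental theorem of calculus along each normal fibre before undoing the change of variables. I have no objection to that part; the Jacobian control you single out is indeed the only genuinely geometric step.

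The second inequality is where your reading goes wrong. You are right that the statement as printed is corrupted --- it fails for $v\equiv 1$, whose left-hand side scales like $\delta^{1/2}$ while $\|\nabla v\|_{H^1}=0$ --- so a typo must be resolved, but the way the lemma is used later in the paper forces the opposite resolution from yours. In the proof of Theorem~\ref{lem_interIFE_up2} the lemma is applied with $\delta=Ch^2$ to $v=[\![p_E^\pm]\!]$, and in the bound for $c_h^J$ with $\delta=Ch$ to the components of $[\![\sigma^\pm\mathbf{n}]\!]$; both functions are only in $H^1$ and have nonvanishing traces on $\Gamma$ (the pressure jumps, and $[\![\sigma^\pm\mathbf{n}]\!]|_\Gamma=\mathbf{g}$), and in both places the conclusion extracted is exactly $\|v\|_{L^2(U(\Gamma,\delta))}\leq C\delta^{1/2}\|v\|_{H^1(U(\Gamma,\delta_0))}$. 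So the intended second statement keeps the hypothesis $v|_\Gamma\not=0$ (read: the general case, in which the trace term cannot be dropped), and the typo is the spurious $\nabla$ on the right-hand side; its proof is then one line, namely the first inequality combined with the trace inequality $\|v\|_{L^2(\Gamma)}\leq C\|v\|_{H^1(U(\Gamma,\delta_0))}$ and $\delta^2\leq\delta_0\delta$. Your version instead assumes $v|_\Gamma=0$ and, by applying the first inequality to $\nabla v$, implicitly requires $v\in H^2(U(\Gamma,\delta_0))$; moreover your chain actually produces the stronger power $\delta^{3/2}$ rather than the stated $\delta^{1/2}$. What you prove is true, but it is a different lemma, and it would not cover the instances in which the paper actually invokes the second inequality.
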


Finally, we emphasize that throughout the paper, $C$ or $C$ with a subscript is used to denote generic positive constants that are independent of the mesh size and the interface location relative to the mesh.

\subsection{Discretization of the interface}
Let $I_hd$ be the standard  linear Lagrange interpolant of $d(\mathbf{x})$ associated with $\mathcal{T}_h$, i.e.,
$$(I_hv)|_T \in P_1(T)~ \forall T\in\mathcal{T}_h,   ~(I_hv)(\mathbf{x}_i)=v(\mathbf{x}_i) ~\forall \mathbf{x}_i\in \mathcal{N}_h,$$
where $P_1(T)$ is the space of linear polynomials on $T$.
The approximate interface $\Gamma_h$ is then  chosen as 
$
\Gamma_h=\{\mathbf{x}\in\mathbb{R}^N : I_hd(\mathbf{x})=0\}.
$
Correspondingly, $\Gamma_h$ divides $\Omega$ into two subdomains: $\Omega_h^+$ and $\Omega_h^-$, with $\partial \Omega_h^-=\Gamma_h$.   For all $T\in\mathcal{T}_h^\Gamma$, we define $\Gamma_{h,T}=\Gamma_h \cap T$ and $\Gamma_{T}=\Gamma \cap T$,  assuming $\Gamma_{h,T}\not=\emptyset$ to simplify the presentation and to avoid technical details. 
Note that if there exists an element $T$ such that $\Gamma_{h,T}=\emptyset$,  then the element can be treated as a non-interface element without deteriorating the optimal convergence rate, as the geometric error is of the order $O(h^2)$ (see, e.g., \cite{Li2010Optimal}).

Clearly, $\Gamma_h$ is continuous and piecewise linear, as illustrated in the left plot of Fig~\ref{fig_inter} for the 2D case. 
We note that in traditional 2D IFEs, the interface is discretized by connecting the points where the exact interface intersects the mesh. However, this approach cannot be extended to 3D because there exists the case that the intersection points are not coplanar, as demonstrated in the right plot of Figure~\ref{fig_inter}.
It is also worth noting that our discretization strategy for the interface makes the IFE method developed in this paper especially compatible with the well-known level set method \cite{osher2001level,tornberg2000finite}, which is advantageous for solving complex moving interface problems.

\begin{figure} [htbp]
\centering
\subfigure{
\includegraphics[width=0.28\textwidth]{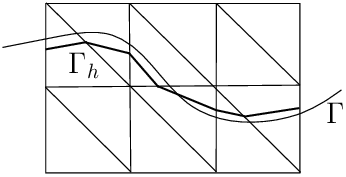}}~~~
\subfigure{
\includegraphics[width=0.2\textwidth]{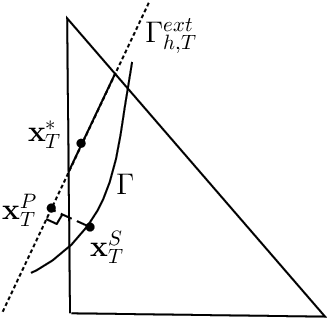}}~~~
\subfigure{
\includegraphics[width=0.15\textwidth]{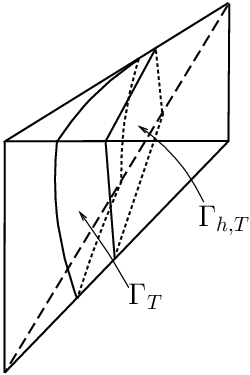}}~~~
 \caption{Left: an example of $\Gamma_h$ in 2D; Middle: an interface element in 2D; Right: an interface element in 3D.\label{fig_inter}} 
\end{figure}

Let $\Gamma^{ext}_{h,T}$ be the $N-1$-dimensional hyperplane containing $\Gamma_{h,T}$ and $\mathbf{n}_{h}$ be a piecewise constant vector defined on all interface elements with $\mathbf{n}_{h}|_T$ being the unit vector perpendicular to $\Gamma^{ext}_{h,T}$  pointing from $\Omega^-_h$ to $\Omega^+_h$.  
We have (see \cite{burman2017cut, ji3Dnonconforming})
\begin{equation}\label{ass_Gamma_h}
\|d\|_{L^\infty(\Gamma_{h,T})}+ \|\mbox{dist}(\cdot,\Gamma_{h,T}^{ext})\|_{L^\infty(\Gamma_T)}+h_T\|\mathbf{n}-\mathbf{n}_{h}\|_{L^\infty(T)}\leq Ch_T^2 \quad \forall T\in\mathcal{T}_h^\Gamma.
\end{equation}
Define a mapping $\mathbf{p}_h:\Gamma_h\rightarrow \Gamma$ by $\mathbf{p}_h(\mathbf{x})=\mathbf{x}+\varrho_h\mathbf{n}_h$ with the smallest $\varrho_h$ chosen such that $\mathbf{x}+\varrho_h\mathbf{n}_h\in\Gamma$. The existence of this mapping is shown in \cite[p. 637]{burman2018Acut}. Moreover, there holds (see (2.18) in \cite{burman2018Acut})
\begin{equation}\label{ph_esti}
\|{\rm id}-\mathbf{p}_h\|_{L^\infty(\Gamma_{h,T})}+ h_T\|\mathbf{n}_h-\mathbf{n}\circ\mathbf{p}\|_{L^\infty(\Gamma_{h,T})}\leq Ch_T^2.
\end{equation}

\subsection{The IFE space for the case $\mathbf{g}=\mathbf{0}$}
For clarity, we first consider the interface problem with homogeneous jump conditions, i.e.,  $\mathbf{g}=\mathbf{0}$.
Let $P_k(D)$ be the set of all polynomials of degree less than or equal to $k$ on a domain $D\subset\mathbb{R}^N$. Define  $D^\pm=D\cap \Omega^\pm$ and $P_k(\cup D^\pm)=\{v : v|_{D^\pm} \in P_k(D^\pm)\}$. Similarly, we define $\boldsymbol{P}_k(D)=P_k(D)^N$ and $\boldsymbol{P}_k(\cup D^\pm)=P_k(\cup D^\pm)^N$.
On each element $T\in\mathcal{T}_h$, the conventional mini element space is
$
(\boldsymbol{V}_h(T), Q_h(T))=(\boldsymbol{P}_1(T), P_1(T))\bigoplus (\mbox{span}\{b_T\}^N, \{0\}),
$
where $b_T$ is the standard bubble function associated with  $T$ (see \cite{arnold1984stable}).

On each interface element $T\in\mathcal{T}_h^\Gamma$, we need to modify $(\boldsymbol{P}_1(T), P_1(T))$ since it cannot  capture the behavior of the exact solution well due to the interface jump conditions. 
Let $\mathcal{I}_N=\{1,2,...,N\}$ and $\mathbf{t}_{i,h}$, $i\in\mathcal{I}_{N-1}$, be unit tangent vectors to $\Gamma_{h}$ such that $\mathbf{t}_{i,h}$ and $\mathbf{n}_h$ form standard basis vectors in $\mathbb{R}^N$. 
Define $T_h^\pm=T\cap\Omega_h^\pm$ and $\mu_h|_{\Omega_h^\pm}=\mu^\pm$.
According to (\ref{jp_cond2}), (\ref{jp_cond3}) and (\ref{jp_cond1}) with $\mathbf{g}=\mathbf{0}$, we introduce the following discrete interface jump conditions for all $ (\mathbf{v}, q) \in  (\boldsymbol{P}_1(\cup T_h^\pm), P_1(\cup T_h^\pm))$:
\begin{subequations}\label{dis_jp0}
\begin{align}
& [\sigma(\mu_h,\mathbf{v},q)\mathbf{n}_h]_{\Gamma_{h,T}}(\mathbf{x}_T^*)=\mathbf{0},~~\mbox{($\mathbf{x}_T^*$ is a point on $\Gamma_{h,T}$),} \label{dis_jp1}\\
&[\mathbf{v}]_{\Gamma_{h,T}}=\mathbf{0} \mbox{ (or, equivalently, $[\mathbf{v}]_{\Gamma_{h,T}}(\mathbf{x}_T^P)=\mathbf{0}$, $[(\nabla \mathbf{v})\mathbf{t}_{i,h}]_{\Gamma_{h,T}}=\mathbf{0}~\forall i\in\mathcal{I}_{N-1}$)}, \label{dis_jp2}\\
&[\nabla\cdot \mathbf{v}]_{\Gamma_{h,T}}=0,\label{dis_jp3}\\
&\nabla q\in P_0(T)^N \mbox{ (or, equivalently, $[\nabla q]_{\Gamma_{h,T}}=\mathbf{0}$)}, \label{dis_jp4}
\end{align}
\end{subequations}
where the gradient of $q$ (also denoted by $\nabla q$ for simplicity) is understood in a piecewise sense since $q$ is broken across $\Gamma_{h,T}$.

The modified space is then defined by
$$
\widetilde{\boldsymbol{P}_1P_1}(T)=\{   (\mathbf{v}, q)\in  (\boldsymbol{P}_1(\cup T_h^\pm), P_1(\cup T_h^\pm)) :  (\mathbf{v}, q) \mbox{  satisfies  (\ref{dis_jp0})} \}.
$$
Let $\mathbf{a}_{j,T}$, $j\in \mathcal{I}_{N+1}$ be vertices of $T$ and write $\mathbf{v}=(v_1,v_2,...,v_{N})^T$. The degrees of freedom of the space $\widetilde{\boldsymbol{P}_1P_1}(T)$, denoted by ${\rm DoF}_{k,T}$, $k=1, ..., (N+1)^2$, are chosen as 
$$
{\rm DoF}_{j+(i-1)(N+1),T}(\mathbf{v},q)=v_i(\mathbf{a}_{j,T}),~ {\rm DoF}_{(N+1)N+j,T}(\mathbf{v},q)=q(\mathbf{a}_{j,T}), ~\forall j\in \mathcal{I}_{N+1},~ \forall i\in \mathcal{I}_{N}.
$$

\begin{remark}
Condition (\ref{dis_jp4})  is added not only for  the unisolvence of basis functions but also for the inf-sup stability (see the proof of Lemma \ref{lem_infsup1}).
Condition (\ref{dis_jp4}) also implies that $[q]_{\Gamma_{h,T}}$ is a constant on $\Gamma_{h,T}$, and so is $[\sigma(\mu_h,\mathbf{v},q)\mathbf{n}_h]_{\Gamma_{h,T}}$. Thus, (\ref{dis_jp1}) is equivalent to $ [\sigma(\mu_h,\mathbf{v},q)\mathbf{n}_h]_{\Gamma_{h,T}}=\mathbf{0}$.
\end{remark}

\begin{remark}
Since  $\mathbf{v}$ and $q$ are piecewise linear, we know $(\mathbf{v}, q)$ has $2(N+1)^2$ parameters. On the other hand, there are $(N+1)^2$ degrees of freedom and $(N+1)^2$  constraints according to (\ref{dis_jp0}). More precisely, (\ref{dis_jp1}) provides $N$ constraints; (\ref{dis_jp2}) provides $N^2$ constraints; (\ref{dis_jp3}) provides one constraint; and (\ref{dis_jp4}) provides $N$ constraints. Intuitively, we can expect that $(\mathbf{v}, q)$ can be uniquely determined by ${\rm DoF}_{k,T}$, $k=1, 2, ..., (N+1)^2$, i.e., the nodal values $\mathbf{v}(\mathbf{a}_{j,T})$ and $q(\mathbf{a}_{j,T})$.  In subsection~\ref{sec_Unisolvence}, we will prove that this property holds on Cartesian meshes.
\end{remark}

\begin{remark}
The point $\mathbf{x}_T^*\in  \Gamma_{h,T}$ is arbitrary but fixed.
The point $\mathbf{x}_T^P\in \Gamma^{ext}_{h,T}$ is only for theoretical analysis.
We set $\mathbf{x}_T^P=\mathbf{p}_{\Gamma_{h,T}^{ext}}(\mathbf{x}_T^S)$ with $\mathbf{x}_T^S$ being a point on the surface $\Gamma_{T}$, where $\mathbf{p}_{\Gamma_{h,T}^{ext}}$ is the orthogonal projection 
onto the plane $\Gamma_{h,T}^{ext}$ (see the middle plot in Figure~\ref{fig_inter} for the 2D case).
From  (\ref{ass_Gamma_h}) we have 
\begin{equation}\label{ineq_xpxs}
\left|\mathbf{x}_T^P-\mathbf{x}_T^S\right|\leq Ch_T^2~\mbox{ and }~|\mathbf{x}-\mathbf{x}_T^P|\leq |\mathbf{x}-\mathbf{x}_T^S|+|\mathbf{x}_T^S-\mathbf{x}_T^P|\leq Ch_T~ \forall \mathbf{x}\in T,
\end{equation}
where $|\cdot |$ denotes the Euclidean norm.
\end{remark}

Now the local and global mini IFE space are given by
$
\widetilde{\boldsymbol{V}Q_h}(T)=\widetilde{\boldsymbol{P}_1P_1}(T)\bigoplus (\mbox{span}\{b_T\}^N,\{0\})
$
and 
\begin{equation*}
\begin{aligned}
\widetilde{\boldsymbol{V}Q_h}(\Omega)=
 \{ (\mathbf{v},q) :  ~&(\mathbf{v}, q)|_T\in  (\boldsymbol{V}_h(T), Q_h(T)) ~\forall T\in\mathcal{T}_h^{non},~(\mathbf{v}, q)|_T\in \widetilde{\boldsymbol{V}Q_h}(T)~\forall T\in\mathcal{T}_h^\Gamma,\\
&~\mathbf{v} \mbox{ and } q \mbox{ are continuous at every } \mathbf{x}_i\in\mathcal{N}_h~  \}.
\end{aligned}
\end{equation*}
In addition, we introduce a subspace of $\widetilde{\boldsymbol{V}Q_h}(\Omega)$ by 
$$\widetilde{\boldsymbol{V}Q_{h,0}}(\Omega)=  \{ (\mathbf{v},q)\in\widetilde{\boldsymbol{V}Q_h}(\Omega):~\mathbf{v}|_{\partial \Omega}=\mathbf{0}, ~ q\in L_0^2(\Omega) \}.$$
\begin{remark}
The standard bubble functions, which do not encode the jump conditions, are added to ensure the inf-sup condition. Because they possess higher frequencies, the optimal approximation capability of the IFE space is not affected. 
We also note that the IFE functions, including the bubble functions, are continuous across the interface. 
This continuity means that the IFE space is conforming on each interface element, thus eliminating the need to add a penalty term across the interface in the finite element formulation (\ref{def_Ah}).
\end{remark}

\subsection{The IFE method for the case $\mathbf{g}=\mathbf{0}$}
Similar to the IFE methods for elliptic interface problems, the velocity in the IFE space is discontinuous across  interface faces, i.e., the IFE space is non-conforming. To get a consistent method, we should add some integral terms on interface faces, like the discontinuous Galerkin method. 
Given a face $F\in \mathcal{F}_h$ shared by two elements $T_1$ and $T_2$, let $\mathbf{n}_F$ be the unit normal vector of $F$ pointing from $T_1$ to $T_2$. 
The jump and the average of a function $v$ across the face $F$ are denoted by 
$[v]_F:=v|_{T_1}-v|_{T_2}$ and  $\{v\}_F:=\frac{1}{2}(v|_{T_1}+v|_{T_2})$, respectively.

Let  $\mathbf{f}^\pm_E$ be some extension of $\mathbf{f}^\pm:=\mathbf{f}|_{\Omega^\pm}$ such that $\mathbf{f}^\pm_E|_{\Omega^\pm}=\mathbf{f}^\pm$ and $\|\mathbf{f}^\pm_E\|_{L^2(\Omega)}\leq \|\mathbf{f}^\pm\|_{L^2(\Omega^\pm)}$. Obviously, the trivial extension, $\mathbf{f}^\pm_E=0$ outside $\Omega^\pm$, satisfies the requirements.
We approximate the source term $\mathbf{f}$ by $\mathbf{f}^{BK}|_{\Omega_h^\pm}=\mathbf{f}_E^\pm|_{\Omega_h^\pm}$. Define the mismatch region by $\Omega^\triangle=(\Omega_h^+\backslash \Omega^+)\cup(\Omega_h^-\backslash \Omega^-)$. Clearly, $\mathbf{f}|_{\Omega\backslash\Omega^\triangle}=\mathbf{f}^{BK}|_{\Omega\backslash\Omega^\triangle}$ and
$\|\mathbf{f}^{BK}\|_{L^2(\Omega^\triangle)}\leq C\|\mathbf{f}\|_{L^2(\Omega)}$.
Define the following forms with $\gamma=1$ or $-1$ and $\eta\geq0$: 
\begin{equation}\label{def_Ah}
\begin{aligned}
&\mathcal{A}_h(\mathbf{u}_h, p_h; \mathbf{v}_h, q_h ):=a_h(\mathbf{u}_h,\mathbf{v}_h)+b_h(\mathbf{v}_h,p_h)-b_h(\mathbf{u}_h,q_h),\\
&a_h(\mathbf{u}_h,\mathbf{v}_h):=\sum_{T\in\mathcal{T}_h}\int_T2\mu_h \boldsymbol{\epsilon}(\mathbf{u}_h):\boldsymbol{\epsilon}(\mathbf{v}_h)+\sum_{F\in\mathcal{F}_h^\Gamma}\frac{1+\eta}{h_F}\int_F[\mathbf{u}_h]_F\cdot[\mathbf{v}_h]_F\\
&\qquad\qquad-\sum_{F\in\mathcal{F}_h^\Gamma}\int_F \left(\{ 2\mu_h\boldsymbol{\epsilon}(\mathbf{u}_h)\mathbf{n}_F\}_F\cdot[\mathbf{v}_h]_F+\gamma\{ 2\mu_h\boldsymbol{\epsilon}(\mathbf{v}_h)\mathbf{n}_F\}_F\cdot[\mathbf{u}_h]_F\right),\\
&b_h(\mathbf{v}_h,q_h):=-\sum_{T\in\mathcal{T}_h}\int_T q_h\nabla\cdot\mathbf{v}_h +\sum_{F\in\mathcal{F}_h^\Gamma}\int_F\{q_h\}_F[\mathbf{v}_h\cdot\mathbf{n}_F]_F,\\
&l_h(\mathbf{v}_h):=\int_{\Omega} \mathbf{f}^{BK}\cdot\mathbf{v}_h.
\end{aligned}
\end{equation}
The discretization of (\ref{weakform}) with $\mathbf{g}=\mathbf{0}$ reads: Find $(\mathbf{u}_h, p_h)\in \widetilde{\boldsymbol{V}Q_{h,0}}(\Omega)$ such that 
\begin{equation}\label{IFE_method_hom}
\mathcal{A}_h(\mathbf{u}_h,p_h; \mathbf{v}_h,q_h )=l_h(\mathbf{v}_h) \qquad \forall (\mathbf{v}_h, q_h)\in  \widetilde{\boldsymbol{V}Q_{h,0}}(\Omega).
\end{equation}

\subsection{Extension to the case $\mathbf{g}\not=\mathbf{0}$}\label{subsec_corre_fun}
Now, we extend the IFE method to the case with non-homogeneous jump conditions.
We need a correction function $(\mathbf{u}_h^J, p_h^J)$ to deal with the jump condition (\ref{jp_cond1}) when $\mathbf{g}\not=\mathbf{0}$. On non-interface elements, we set $(\mathbf{u}_h^J, p_h^J)=(\mathbf{0},0)$, and on each interface element $T\in\mathcal{T}_h^\Gamma$, we construct  $(\mathbf{u}_h^J, p_h^J)\in (\boldsymbol{P}_1(\cup T_h^\pm), P_1(\cup T_h^\pm))$ satisfying
$\mathbf{u}_h^J(\mathbf{a}_{i,T})=\mathbf{0}, p_h^J(\mathbf{a}_{i,T})=0$  for all $i\in \mathcal{I}_{N+1}$
and the jump conditions in (\ref{dis_jp0}) with (\ref{dis_jp1}) changed to 
\begin{equation}\label{def_uhj}
[\sigma(\mu_h,\mathbf{u}_h^J,p_h^J)\mathbf{n}_h]_{\Gamma_{h,T}}={\rm avg}_{\Gamma_{R_T}}(\mathbf{g}),~ \mbox{ where }  ~{\rm avg}_{\Gamma_{R_T}}(\mathbf{g}):=|\Gamma_{R_T}|^{-1}\int_{\Gamma_{R_T}}\mathbf{g}.
\end{equation}
Here $\Gamma_{R_T}:=\Gamma\cap R_T$ and $R_T\subset \mathbb{R}^N$ is a larger domain containing $T$
such that its diameter $h_{R_T}\leq Ch_T$ and $|\Gamma_{R_T}|\geq Ch_T^{N-1}$. We note that, in this occasion,  $|\cdot|$ means the measure of domains or manifolds.

\begin{figure} [htbp]
\centering
\includegraphics[width=0.3\textwidth]{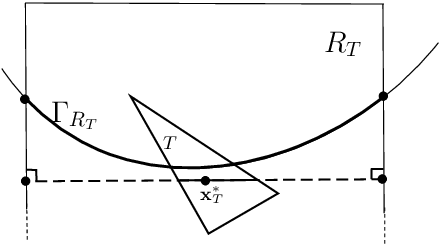}
 \caption{2D illustration for  the construction of $R_T$ and $\Gamma_{R_T}$. \label{fig_RT}} 
\end{figure}

Here we give an example of $R_T$. For ease of implementation, we choose 
$R_T=\{\mathbf{x}\in \mathbb{R}^N : \mathbf{x}=\mathbf{x}_{T}^*+\xi_N\mathbf{n}_{h}+\sum_{i=1}^{N-1} \xi_i\mathbf{t}_{i,h}, |\xi_i|\leq  h_T ~\forall i\in \mathcal{I}_N \}$. A 2D illustration of $R_T$ and $\Gamma_{R_T}$ is shown in Figure~\ref{fig_RT}.
By (\ref{ph_esti}), it is  not hard to see that there exists a constant $C_\Gamma$ such that
$\mbox{dist}(\mathbf{x},\Gamma_{h,T}^{ext})\leq C_\Gamma h_T^2$ for all $\mathbf{x}\in \Gamma_{R_T}$.
Then we further assume $h<\min\{1/(2C_\Gamma),\delta_0/(\sqrt{N})\}$ so that $\overline{R_T}\subset U(\Gamma,\delta_0)$ and $\mbox{dist}(\mathbf{x},\Gamma_{h,T}^{ext})\leq h_T/2$ for all $\mathbf{x}\in \Gamma_{R_T}$.
Now $\Gamma_{R_T}$ can be parameterized as $\mathbf{x}(\xi_1,...,\xi_{N-1})=\mathbf{x}_{T}^*+ \xi_N(\xi_1,...,\xi_{N-1})\mathbf{n}_{h}+ \sum_{i=1}^{N-1}\xi_i\mathbf{t}_{i,h}$, $-h_T\leq \xi_1,...,\xi_{N-1}\leq h_T$, where $\xi_N(\xi_1,...,\xi_{N-1})$ is chosen such that $\mathbf{x}(\xi_1,...,\xi_{N-1})\in\Gamma$. The parametric representation of $\Gamma_{R_T}$ is useful in computing  ${\rm avg}_{\Gamma_{R_T}}(\mathbf{g})$ practically. Obviously, the requirements $|\Gamma_{R_T}|\geq Ch_T^{N-1}$ and $h_{R_T}\leq Ch_T$ are fulfilled. Applying these and the following well-known trace inequality for parts of $\Gamma$ (see, e.g., \cite[Lemma 1]{guzman2018inf}):
\begin{equation}\label{tac_ine}
\|v\|^2_{L^2(\Gamma_{R_T})}\leq C(h_T^{-1}\|v\|^2_{L^2(R_T)}+h_T|v|^2_{H^1(R_T)})\quad \forall v\in H^1(R_T),
\end{equation}
we obtain the following useful inequality:
\begin{equation}\label{avr_tac_ine}
\begin{aligned}
|{\rm avg}_{\Gamma_{R_T}}(v)|^2\leq |\Gamma_{R_T}|^{-1}\|v\|^2_{L^2(\Gamma_{R_T})}\leq Ch_T^{-N}(\|v\|^2_{L^2(R_T)}+h_T^2|v|^2_{H^1(R_T)})\quad \quad \forall v\in H^1(R_T).
\end{aligned}
\end{equation}

For the nonzero force $\mathbf{g}$ defined on $\Gamma$, we map it to $\Gamma_h$ using $\mathbf{g}_h=\mathbf{g}\circ \mathbf{p}_h$. Consequently, we modify the linear form $l_h(\cdot)$, previously defined in (\ref{def_Ah}), to
$$l_h(\mathbf{v}_h):=\int_{\Omega} \mathbf{f}^{BK}\cdot\mathbf{v}_h-\int_{\Gamma_h}\mathbf{g}_h\cdot\mathbf{v}_h.$$
Alternatively, we can use $\mathbf{g}_h=\mathbf{g}\circ \mathbf{p}$.
However, as shown in \cite{burman2018Acut}, the computation of $\mathbf{p}(\mathbf{x})$ for a given $\mathbf{x}\in\Gamma_h$ is substantially more costly than that of $\mathbf{p}_h(\mathbf{x})$.

The discretization of (\ref{weakform}) now reads: Find $(\mathbf{U}_h, P_h):=(\mathbf{u}_h, p_h)+(\mathbf{u}_h^J,p_h^J-c_h^J)$ such that 
\begin{equation}\label{IFE_method}
\begin{aligned}
 &(\mathbf{u}_h, p_h)\in \widetilde{\boldsymbol{V}Q_{h,0}}(\Omega),\quad c_h^J=\int_\Omega p_h^J,\\
&\mathcal{A}_h(\mathbf{u}_h,p_h; \mathbf{v}_h,q_h )=l_h(\mathbf{v}_h)-\mathcal{A}_h(\mathbf{u}_h^J,p_h^J-c_h^J; \mathbf{v}_h,q_h ) \qquad \forall (\mathbf{v}_h, q_h)\in  \widetilde{\boldsymbol{V}Q_{h,0}}(\Omega),
\end{aligned}
\end{equation}
where the constant $c_h^J$ is used to ensure $P_h\in L_0^2(\Omega)$.
\begin{remark}
When $\mathbf{g}=\mathbf{0}$, it obviously follows that $\mathbf{u}_h^J=\mathbf{0}$ and $p_h^J=c_h^J=0$. Consequently,  (\ref{IFE_method}) simplifies to (\ref{IFE_method_hom}).
For completeness, in sections 3 and 4, we conduct the theoretical analysis for (\ref{IFE_method}), applicable to the general interface problem (\ref{weakform}) where $\mathbf{g}$ may or may not be zero. It's important to highlight that, by setting $\mathbf{u}_h^J=\mathbf{0}$ and $p_h^J=c_h^J=0$ in these sections, we achieve a more straightforward analysis for the case when $\mathbf{g}=\mathbf{0}$.
\end{remark}

For practical implementations, we next derive explicit forms of basis and correction functions.
\subsection{Explicit formulas for IFE basis functions}\label{sec_Unisolvence} 
The space $\widetilde{\boldsymbol{P}_1P_1}(T)$ is not empty since $(\mathbf{0},0)\in\widetilde{\boldsymbol{P}_1P_1}(T)$. For each $(\mathbf{v},q)\in \widetilde{\boldsymbol{P}_1P_1}(T)$, if  
$$
[\sigma(1,\mathbf{v},q)\mathbf{n}_h]_{\Gamma_{h,T}}=\sum_{i=1}^{N-1}c_i\mathbf{t}_{i,h}+c_N\mathbf{n}_h,
$$
 we have 
\begin{equation}\label{decomp1}
(\mathbf{v},q)=(\mathbf{v}^{J_0},q^{J_0})+\sum_{i=1}^{N}c_i(\mathbf{v}^{J_i}, q^{J_i}),
\end{equation}
where $(\mathbf{v}^{J_0}, q^{J_0})$ is defined by
\begin{equation}\label{def_j0}
(\mathbf{v}^{J_0}, q^{J_0})\in (\boldsymbol{P}_1(T), P_1(T)),~ {\rm DoF}_{i,T}(\mathbf{v}^{J_0}, q^{J_0})={\rm DoF}_{i,T}(\mathbf{v},q)~\forall i\in\{1, 2, ..., (N+1)^2\},
\end{equation}
and $(\mathbf{v}^{J_i}, q^{J_i})$ is defined by
\begin{equation}\label{def_ji}
\begin{aligned}
&(\mathbf{v}^{J_i}, q^{J_i})\in (\boldsymbol{P}_1(\cup T_h^\pm), P_1(\cup T_h^\pm)),~ {\rm DoF}_{j,T}(\mathbf{v}^{J_i}, q^{J_i})=0~\forall j\in\{1, 2, ..., (N+1)^2\},\\
&[\sigma(1,\mathbf{v}^{J_i},q^{J_i})\mathbf{n}_h]_{\Gamma_{h,T}}=\left\{
\begin{array}{ll}
\mathbf{t}_{i,h} & \mbox{ if }~ i\in\{1,..., N-1\},\\
\mathbf{n}_h&\mbox{ if }~ i=N,
\end{array}
\right.\\
&[\mathbf{v}^{J_i}]_{\Gamma_{h,T}}(\mathbf{x}_T^P)=\mathbf{0}, ~[(\nabla \mathbf{v}^{J_i})\mathbf{t}_{j,h}]_{\Gamma_{h,T}}=\mathbf{0}~\forall  j\in\{1,..., N-1\},\\
&[\nabla\cdot\mathbf{v}^{J_i}]_{\Gamma_{h,T}}=0,~[\nabla q^{J_i}]_{\Gamma_{h,T}}=\mathbf{0}.
\end{aligned}
\end{equation}
Now the problem is to find  $c_i$  so that the jump condition $[\sigma(\mu_h,\mathbf{v},q)\mathbf{n}_h]_{\Gamma_{h,T}}=\mathbf{0}$ is satisfied.
Substituting (\ref{decomp1}) into this jump condition, we obtain a $N$-by-$N$  system of  linear equations:
\begin{equation}\label{eq_cj1}
\sum_{i=1}^{N}[\sigma(\mu_h,\mathbf{v}^{J_i}, q^{J_i})\mathbf{n}_h]_{\Gamma_{h,T}}c_i=-[\sigma(\mu_h,\mathbf{v}^{J_0},q^{J_0})\mathbf{n}_h]_{\Gamma_{h,T}}=-\sigma([\mu_h]_{\Gamma_{h,T}},\mathbf{v}^{J_0},0)\mathbf{n}_h,
\end{equation}
where we note that $[\sigma(\mu_h,\mathbf{v}^{J_i}, q^{J_i})\mathbf{n}_h]_{\Gamma_{h,T}}$ is a $N$-dimensional column vector.
To solve the above system of  linear equations,  we need an explicit expression of $(\mathbf{v}^{J_i}, q^{J_i})$. Let $I_{h,T}$  be the standard linear nodal interpolation operator on the element $T$, then we have the following lemma.
\begin{lemma}\label{lem_vj}
Let $(\mathbf{v}^{J_i}, q^{J_i})$  be defined in (\ref{def_j0}) and (\ref{def_ji}). There holds  
\begin{equation}{\label{def_vjqj}}
(\mathbf{v}^{J_i}, q^{J_i})=
\left\{
\begin{aligned}
&(I_{h,T}\mathbf{v}, I_{h,T}q ) &&\mbox{ if }~ i=0,\\
&((w_T-I_{h,T}w_T)\mathbf{t}_{i,h}, 0) &&\mbox{ if }~ i\in\{1,...,N-1\},\\
&(\mathbf{0}, z_T-I_{h,T}z_T)  &&\mbox{ if }~ i=N,
\end{aligned}\right.
\end{equation}
where
\begin{equation}\label{def_zw}
z_T(\mathbf{x})=\left\{
\begin{aligned}
&-1~~ &&\mbox{ if }~\mathbf{x}\in T_h^+,\\
&0\quad &&\mbox{ if }~\mathbf{x}\in T_h^-,
\end{aligned}\right.
\qquad
w_T(\mathbf{x})=\left\{
\begin{aligned}
&\mbox{dist}(\mathbf{x},\Gamma_{h,T}^{ext})~~ &&\mbox{ if }~\mathbf{x}\in T_h^+,\\
&0\quad &&\mbox{ if }~\mathbf{x}\in T_h^-.
\end{aligned}\right.
\end{equation}
\end{lemma}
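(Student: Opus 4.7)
The plan is to prove Lemma \ref{lem_vj} by direct verification: for each value of $i$, I check that the proposed expression in (\ref{def_vjqj}) belongs to the required space and satisfies all the conditions imposed by (\ref{def_j0}) or (\ref{def_ji}).

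The case $i=0$ is immediate: $(I_{h,T}\mathbf{v}, I_{h,T}q)$ lies in $(\boldsymbol{P}_1(T), P_1(T))$ and, by definition of the Lagrange interpolant, shares all nodal degrees of freedom with $(\mathbf{v},q)$, which is exactly what (\ref{def_j0}) requires.

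For $i\in\{1,\ldots,N-1\}$ I would first verify the space membership and nodal conditions. The function $w_T$ vanishes on $T_h^-$ and equals $\mathbf{n}_h\cdot(\mathbf{x}-\mathbf{x}_T^*)$ on $T_h^+$ (for any reference point $\mathbf{x}_T^*\in\Gamma_{h,T}^{ext}$), hence is linear on each piece, so $w_T-I_{h,T}w_T\in P_1(\cup T_h^\pm)$ and vanishes at every vertex of $T$. The essential computation is the jump of the gradient: $\nabla w_T|_{T_h^+}=\mathbf{n}_h$, $\nabla w_T|_{T_h^-}=\mathbf{0}$, and $\nabla I_{h,T}w_T$ is a single constant vector on $T$, so $[\nabla w_T]_{\Gamma_{h,T}}=\mathbf{n}_h$. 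Writing $\mathbf{v}^{J_i}=(w_T-I_{h,T}w_T)\mathbf{t}_{i,h}$ gives $\nabla\mathbf{v}^{J_i}=\mathbf{t}_{i,h}\otimes\nabla(w_T-I_{h,T}w_T)$, whence
\[
[2\boldsymbol{\epsilon}(\mathbf{v}^{J_i})\mathbf{n}_h]_{\Gamma_{h,T}} = (\mathbf{t}_{i,h}\otimes\mathbf{n}_h+\mathbf{n}_h\otimes\mathbf{t}_{i,h})\mathbf{n}_h = \mathbf{t}_{i,h},
\]
using $|\mathbf{n}_h|=1$ and $\mathbf{t}_{i,h}\cdot\mathbf{n}_h=0$. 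The remaining conditions follow at once: $[(\nabla\mathbf{v}^{J_i})\mathbf{t}_{j,h}]_{\Gamma_{h,T}}=(\mathbf{n}_h\cdot\mathbf{t}_{j,h})\mathbf{t}_{i,h}=\mathbf{0}$, $[\nabla\cdot\mathbf{v}^{J_i}]_{\Gamma_{h,T}}=\mathbf{t}_{i,h}\cdot\mathbf{n}_h=0$, and $[\mathbf{v}^{J_i}]_{\Gamma_{h,T}}(\mathbf{x}_T^P)=\mathbf{0}$ because $\mathbf{x}_T^P$ lies on $\Gamma_{h,T}^{ext}$, where the distance function $w_T$ vanishes from both sides.

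The case $i=N$ is analogous but simpler. The piecewise constant function $z_T$ lies in $P_1(\cup T_h^\pm)$, and $z_T-I_{h,T}z_T$ vanishes at every node. Since $\mathbf{v}^{J_N}=\mathbf{0}$, all velocity jump conditions are trivial, while the pressure $q^{J_N}=z_T-I_{h,T}z_T$ satisfies $[\nabla q^{J_N}]_{\Gamma_{h,T}}=\mathbf{0}$ because $\nabla z_T=\mathbf{0}$ piecewise and $\nabla I_{h,T}z_T$ is constant on $T$. Finally, $[q^{J_N}]_{\Gamma_{h,T}}=[z_T]_{\Gamma_{h,T}}=-1$, giving $[\sigma(1,\mathbf{0},q^{J_N})\mathbf{n}_h]_{\Gamma_{h,T}}=-[q^{J_N}]_{\Gamma_{h,T}}\mathbf{n}_h=\mathbf{n}_h$ as required. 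There is no serious technical obstacle: the argument is a mechanical verification once one appreciates the geometric design of $w_T$ and $z_T$. The function $w_T$ is tailored so that its gradient has a unit jump in the normal direction $\mathbf{n}_h$, and $z_T$ so that its scalar value has a unit jump; subtracting the Lagrange interpolant removes the nodal values without disturbing these jumps, since any polynomial in $P_1(T)$ is continuous across $\Gamma_{h,T}$.
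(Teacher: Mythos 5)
Your proof is correct and follows essentially the same route as the paper: the paper first builds an auxiliary pair supported on $T_h^+$ with the prescribed one-sided data and then subtracts its nodal interpolant, which is exactly the structure your direct verification exploits (unit normal jump of $\nabla w_T$, unit scalar jump of $z_T$, and the fact that subtracting $I_{h,T}$ kills the nodal values without touching the jumps). The only difference is presentational — you verify the closed forms directly and self-containedly, whereas the paper derives them by citing an argument from \cite{2021ji_IFE_stoke}; in both cases the asserted equality tacitly relies on the (easily checked) uniqueness of the solution of (\ref{def_ji}).
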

\begin{proof}
The result is trivial for $i=0$.  For $i\in\mathcal{I}_N$,  we define a new function having the same interface jump conditions as $(\mathbf{v}^{J_i}, q^{J_i})$ by 
\begin{equation}\label{pro_vi0}
(\mathbf{v}_i, q_i)=\left\{
\begin{aligned}
&(\mathbf{v}_i^+, q_i^+)\in (P_1(T_h^+)^N, P_1(T_h^+)) &&\mbox{ in } T_h^+,\\
&(\mathbf{0},0) &&\mbox{ in } T_h^-,
\end{aligned}
\right.
\end{equation}
where the linear functions $\mathbf{v}_i^+$ and $q_i^+$ are chosen such that 
\begin{equation}\label{pro_vi1}
\begin{aligned}
&\sigma(1,\mathbf{v}_i^+,q_i^+)\mathbf{n}_h=\left\{
\begin{array}{ll}
\mathbf{t}_{i,h} &\mbox{ if }~ i\in\{1,..., N-1\},\\
\mathbf{n}_h&\mbox{ if }~ i=N,
\end{array}
\right.\\
&\mathbf{v}_i^+(\mathbf{x}_T^P)=\mathbf{0},~(\nabla \mathbf{v}_i^+)\mathbf{t}_{j,h}=\mathbf{0}~\forall j\in\{1,..., N-1\},~~\nabla\cdot\mathbf{v}_i^+=0,~\nabla q_i^+=\mathbf{0}.
\end{aligned}
\end{equation}
Then, by definition, the function $(\mathbf{v}^{J_i}, q^{J_i})$ can be constructed as
\begin{equation}\label{pro_vi2}
(\mathbf{v}^{J_i}, q^{J_i})=(\mathbf{v}_i, q_i)-(I_{h,T}\mathbf{v}_i, I_{h,T}q_i),
\end{equation}
where we used the facts that $(I_{h,T}\mathbf{v}_i, I_{h,T}q_i)$ has no jumps across $\Gamma_{h,T}$ and $(\mathbf{v}_i, q_i)-(I_{h,T}\mathbf{v}_i, I_{h,T}q_i)$ vanishes on all vertices of $T$. 
Considering (\ref{pro_vi1}) in the $\mathbf{t}_{i,h}$-$\mathbf{n}_h$ coordinate system and using an argument similar to the one in the proof of Lemma 4.5 in \cite{2021ji_IFE_stoke}, it is not hard to see 
\begin{equation}\label{pro_vi3}
(\mathbf{v}_i^+(\mathbf{x}), q_i^+)=
\left\{
\begin{aligned}
&(\mbox{dist}(\mathbf{x},\Gamma_{h,T}^{ext})\mathbf{t}_{i,h}, 0)&&\mbox{ if }~i\in\{1,...,N-1\},\\
&(\mathbf{0}, -1) &&\mbox{ if }~i=N.
\end{aligned}
\right.
\end{equation}
The result (\ref{def_vjqj}) follows from (\ref{pro_vi0}), (\ref{pro_vi2}) and (\ref{pro_vi3}).
\end{proof}

Substituting (\ref{def_vjqj}) into (\ref{eq_cj1}) and  using the $\mathbf{t}_{i,h}$-$\mathbf{n}_h$ coordinate system, we obtain 
\begin{equation}\label{eq_cj2}
\begin{aligned}
(1+(\mu^-/\mu^+-1)\nabla I_{h,T}w_T\cdot \mathbf{n}_h)c_i&=\mathbf{t}_{i,h}^T\sigma(\mu^-/\mu^+-1,I_{h,T}\mathbf{v},0)\mathbf{n}_h,~i=1,...,N-1,\\
c_N&=\mathbf{n}_{h}^T\sigma(\mu^--\mu^+,I_{h,T}\mathbf{v},0)\mathbf{n}_h,
\end{aligned}
\end{equation}
where we have used the identity $[\mu_h\partial_{\mathbf{n}_h} (w_T-I_{h,T}w_T) ]_{\Gamma_{h,T}}=\mu^++(\mu^--\mu^+)\nabla I_{h,T}w_T\cdot\mathbf{n}_h$ in the derivation.
On the Cartesian meshes used in this paper, the relation
$0 \leq \nabla I_{h,T}w_T\cdot \mathbf{n}_h\leq 1$ holds for both $N=2$ and $N=3$ (see Lemmas 1 and 16 in \cite{2021ji_IFE} and Remark 3 in \cite{kvrivzek1992maximum}). Then we have
\begin{equation}\label{fengmu_jie}
1+(\mu^-/\mu^+-1)\nabla I_{h,T}w_T\cdot \mathbf{n}_h\geq\left\{
\begin{aligned}
&1\qquad&&\mbox{ if }\mu^-/\mu^+\geq 1,\\
&\mu^-/\mu^+\qquad&&\mbox{ if }0<\mu^-/\mu^+< 1,
\end{aligned}\right.
\end{equation}
which implies that (\ref{eq_cj2}) is unisolvent. Combining (\ref{fengmu_jie}), (\ref{decomp1}) and (\ref{def_vjqj}) yields the following lemma.
\begin{lemma}\label{lem_IFEbasis}
For any $T\in\mathcal{T}_h^\Gamma$ with vertices $\mathbf{a}_{i,T}$, $i\in \mathcal{I}_{N+1}$, the  function $(\mathbf{v},q)\in\widetilde{\boldsymbol{P}_1P_1}(T)$ is uniquely determined by nodal values $\mathbf{v}(\mathbf{a}_{i,T})$ and $q(\mathbf{a}_{i,T})$. Furthermore, we have 
\begin{equation}\label{explicit_formula}
(\mathbf{v},q)=(I_{h,T}\mathbf{v}, I_{h,T}q )+\sum_{i=1}^{N-1}(c_i(w_T-I_{h,T}w_T)\mathbf{t}_{i,h}, 0)+(\mathbf{0}, c_N(z_T-I_{h,T}z_T)),
\end{equation}
where $c_i$, $i=1,...,N$, are determined from (\ref{eq_cj2}).
\end{lemma}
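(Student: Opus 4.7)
The plan is to establish both claims of Lemma \ref{lem_IFEbasis} — unisolvence and the explicit formula — simultaneously via the decomposition already set up in (\ref{decomp1}). Given any $(\mathbf{v},q)\in\widetilde{\boldsymbol{P}_1P_1}(T)$, I would first observe that subtracting off the standard continuous linear interpolant $(I_{h,T}\mathbf{v}, I_{h,T}q)$ from $(\mathbf{v},q)$ yields a piecewise linear function whose nodal values vanish but which absorbs all the jumps of $(\mathbf{v},q)$ across $\Gamma_{h,T}$. Hence the question reduces to: given prescribed values of $[\sigma(1,\mathbf{v},q)\mathbf{n}_h]_{\Gamma_{h,T}}$, expanded as $\sum_{i=1}^{N-1}c_i\mathbf{t}_{i,h}+c_N\mathbf{n}_h$, what are the admissible coefficients $c_i$ so that the stronger condition $[\sigma(\mu_h,\mathbf{v},q)\mathbf{n}_h]_{\Gamma_{h,T}}=\mathbf{0}$ holds?

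Next, I would invoke Lemma \ref{lem_vj}, which supplies the explicit building blocks $(\mathbf{v}^{J_i},q^{J_i})$ in terms of $w_T$, $z_T$, and their linear interpolants, so that (\ref{decomp1}) becomes
\[
(\mathbf{v},q)=(I_{h,T}\mathbf{v}, I_{h,T}q)+\sum_{i=1}^{N-1}c_i\bigl((w_T-I_{h,T}w_T)\mathbf{t}_{i,h},0\bigr)+c_N\bigl(\mathbf{0},z_T-I_{h,T}z_T\bigr).
\]
Substituting this into the viscosity-weighted jump condition yields the $N\times N$ linear system (\ref{eq_cj1}). Rewriting it in the local orthonormal frame $\{\mathbf{t}_{1,h},\dots,\mathbf{t}_{N-1,h},\mathbf{n}_h\}$ and using the identity
$[\mu_h\partial_{\mathbf{n}_h}(w_T-I_{h,T}w_T)]_{\Gamma_{h,T}}=\mu^++(\mu^--\mu^+)\nabla I_{h,T}w_T\cdot\mathbf{n}_h$ decouples the system into the diagonal form (\ref{eq_cj2}): the tangential equations all have coefficient $1+(\mu^-/\mu^+-1)\nabla I_{h,T}w_T\cdot\mathbf{n}_h$, and the normal equation is trivial, giving $c_N$ directly.

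The main obstacle is the unisolvence, i.e., showing the diagonal coefficient of the tangential subsystem is strictly positive and hence invertible. This is exactly where the Cartesian mesh hypothesis enters: one appeals to the geometric bound $0\le \nabla I_{h,T}w_T\cdot\mathbf{n}_h\le 1$, valid for the triangular/tetrahedral partitions considered here (citing Lemmas 1 and 16 of \cite{2021ji_IFE} in 2D and 3D respectively, and Remark 3 of \cite{kvrivzek1992maximum}). With this in hand, (\ref{fengmu_jie}) gives a uniform positive lower bound on the diagonal coefficient, depending only on the viscosity ratio but not on how the interface cuts $T$. Therefore the $c_i$ are uniquely determined by the right-hand sides of (\ref{eq_cj2}), which in turn depend only on the nodal data through $I_{h,T}\mathbf{v}$.

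Combining these steps proves that nodal values $\mathbf{v}(\mathbf{a}_{i,T})$ and $q(\mathbf{a}_{i,T})$ uniquely determine $(\mathbf{v},q)$, and simultaneously yields the explicit representation (\ref{explicit_formula}) with $c_i$ given by (\ref{eq_cj2}). I do not anticipate any technical difficulty beyond the geometric lemma on $\nabla I_{h,T}w_T\cdot\mathbf{n}_h$, which is the true content of the Cartesian mesh restriction; everything else is a direct assembly of (\ref{decomp1}), Lemma \ref{lem_vj}, and linear algebra.
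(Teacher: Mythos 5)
Your proposal is correct and follows essentially the same route as the paper: decompose via (\ref{decomp1}) using the explicit building blocks of Lemma \ref{lem_vj}, reduce the stress-jump condition to the diagonal system (\ref{eq_cj2}) in the $\mathbf{t}_{i,h}$-$\mathbf{n}_h$ frame, and invoke the Cartesian-mesh bound $0\le \nabla I_{h,T}w_T\cdot\mathbf{n}_h\le 1$ together with (\ref{fengmu_jie}) to get unisolvence and the explicit formula. You correctly identify the geometric bound as the only nontrivial ingredient, which is exactly how the paper argues.
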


\begin{remark}\label{rema_dep}
From the above lemma, one can see that $\widetilde{\boldsymbol{P}_1P_1}(T)=(\boldsymbol{P}_1(T), P_1(T))$ if $\mu^+=\mu^-$. Thus, the IFE reduces to the conventional mini element method in the absence of interfaces.  We also find that $\mathbf{v}$ depends only on $\mathbf{v}(\mathbf{a}_i)$, while  $q$  depends not only on $\mathbf{v}(\mathbf{a}_i)$  but also on $q(\mathbf{a}_i)$. Note that $(\mathbf{0}, c)\in\widetilde{\boldsymbol{P}_1P_1}(T)$ for  any constant $c$. We can choose a proper constant $c$ to make the pressure have average zero.
\end{remark}

Define the basis functions of $\widetilde{\boldsymbol{P}_1P_1}(T)$ by ${\rm DoF}_{l,T}(\boldsymbol{\phi}_{k,T}^{\Gamma}, \varphi_{k,T}^{\Gamma})=\delta_{kl}$ (the Kronecker symbol) for all  $k, l=1,..., (N+1)^2.$ The standard basis functions of $(\boldsymbol{P}_1(T),P_1(T))$, denoted by $(\boldsymbol{\phi}_{k,T}, \varphi_{k,T})$, are defined analogously.
By Lemma~\ref{lem_IFEbasis} we can write the IFE  basis functions explicitly as follows:
\begin{equation*}
\boxed{
\begin{aligned}
&\boldsymbol{\phi}_{k,T}^{\Gamma}=\left\{
\begin{aligned}
&\boldsymbol{\phi}_{k,T}+\sum_{i=1}^{N-1}\frac{\mathbf{t}_{i,h}^T\sigma(\mu^-/\mu^+-1,\boldsymbol{\phi}_{k,T},0)\mathbf{n}_h}{1+(\mu^-/\mu^+-1)\nabla I_{h,T}w_T\cdot \mathbf{n}_h}(w_T- I_{h,T}w_T)\mathbf{t}_{i,h},~k=1,..., N(N+1),\\
&\mathbf{0},\qquad\qquad \qquad\qquad\qquad\qquad\qquad\qquad\qquad\qquad k=N(N+1)+1,..., (N+1)^2,
\end{aligned}
\right.
\\
&\varphi_{k,T}^{\Gamma}=\left\{
\begin{aligned}
&\mathbf{n}_h^T\sigma(\mu^--\mu^+,\boldsymbol{\phi}_{k,T},0)\mathbf{n}_h(z_T-I_{h,T} z_T),~&&k=1,..., N(N+1),\\
&\varphi_{k,T}, &&k=N(N+1)+1,..., (N+1)^2.
\end{aligned}
\right.
\end{aligned}}
\end{equation*}
Clearly, adopting the $\mathbf{t}_{i,h}$-$\mathbf{n}_h$ coordinate system significantly simplifies the formulas. These are detailed in Appendix~\ref{sec_app_basis}.

Using the above explicit formulas, the definition of $z_T$ and $w_T$ in (\ref{def_zw})  and the estimates of standard  basis functions,  it is easy to prove the following lemma.
\begin{lemma}\label{lem_basisest}
There exists a positive constant $C$  such that for $m=0,1$,
\begin{equation*}
\begin{aligned}
&|(\boldsymbol{\phi}_{i,T}^{\Gamma})^\pm|_{W_\infty^m(T)}\leq Ch_T^{-m},~~\|(\varphi_{i,T}^{\Gamma})^\pm\|_{L^\infty(T)}\leq Ch_T^{-1},~~i=1,..., N(N+1),\\
&|(\boldsymbol{\phi}_{i,T}^{\Gamma})^\pm|_{W_\infty^m(T)}=0,~~\|(\varphi_{i,T}^{\Gamma})^\pm\|_{L^\infty(T)}=1,~~i=N(N+1)+1,..., (N+1)^2.
\end{aligned}
\end{equation*}
\end{lemma}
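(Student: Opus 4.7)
The plan is to apply directly the boxed closed-form expressions for $(\boldsymbol{\phi}_{k,T}^\Gamma,\varphi_{k,T}^\Gamma)$ derived just before the lemma and to bound each factor via elementary estimates of the auxiliary ingredients: the standard linear nodal basis $(\boldsymbol{\phi}_{k,T},\varphi_{k,T})$, the distance-type function $w_T$, the piecewise-constant function $z_T$, their linear interpolants, and the denominator $1+(\mu^-/\mu^+-1)\nabla I_{h,T}w_T\cdot\mathbf{n}_h$. The pure-pressure indices $i=N(N+1)+1,\ldots,(N+1)^2$ are immediate from the boxed formulas: $\boldsymbol{\phi}_{i,T}^\Gamma=\mathbf{0}$, so all seminorms vanish, while $\varphi_{i,T}^\Gamma=\varphi_{i,T}$ is a standard linear nodal basis function with nodal values in $\{0,1\}$, so $\|\varphi_{i,T}^\Gamma\|_{L^\infty(T)}=1$; this settles the second line of the lemma.

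For the velocity indices $i=1,\ldots,N(N+1)$ I would assemble four building blocks. First, by standard linear FE theory, $|\boldsymbol{\phi}_{i,T}|_{W^m_\infty(T)}\leq Ch_T^{-m}$. Second, since $w_T|_{T_h^+}$ is the distance to the hyperplane $\Gamma_{h,T}^{ext}$ and $w_T|_{T_h^-}=0$, we have $0\leq w_T\leq h_T$ on $T$ and $|\nabla w_T^\pm|\leq 1$; the nodal values of $w_T$ lie in $[0,h_T]$, and shape regularity then gives $\|I_{h,T}w_T\|_{L^\infty(T)}\leq h_T$ together with $|\nabla I_{h,T}w_T|\leq C$, whence
\[
|w_T-I_{h,T}w_T|_{W^m_\infty(T^\pm)}\leq Ch_T^{1-m},\qquad m=0,1.
\]
Third, $|z_T|\leq 1$ on $T$ with nodal values in $\{0,-1\}$, so $\|z_T-I_{h,T}z_T\|_{L^\infty(T^\pm)}\leq 2$. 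Fourth, and crucially, the denominator in the boxed formula is bounded below by the positive constant $\min\{1,\mu^-/\mu^+\}$ uniformly in $h_T$ and in the interface location, as already established in (\ref{fengmu_jie}) using the Cartesian mesh bound $0\leq\nabla I_{h,T}w_T\cdot\mathbf{n}_h\leq 1$.

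To conclude, the numerators in the boxed formulas, namely $\mathbf{t}_{i,h}^T\sigma(\mu^-/\mu^+-1,\boldsymbol{\phi}_{k,T},0)\mathbf{n}_h$ and $\mathbf{n}_h^T\sigma(\mu^--\mu^+,\boldsymbol{\phi}_{k,T},0)\mathbf{n}_h$, involve only $\nabla\boldsymbol{\phi}_{k,T}$ and are therefore bounded by $Ch_T^{-1}$ with $C$ depending on $\mu^\pm$. Combining this with the uniform lower bound on the denominator and the estimates on $w_T-I_{h,T}w_T$ and $z_T-I_{h,T}z_T$, the triangle inequality applied term-by-term in the boxed formulas yields
\[
|(\boldsymbol{\phi}_{i,T}^\Gamma)^\pm|_{W^m_\infty(T)}\leq Ch_T^{-m}+Ch_T^{-1}\cdot h_T^{1-m}\leq Ch_T^{-m}
\]
and $\|(\varphi_{i,T}^\Gamma)^\pm\|_{L^\infty(T)}\leq Ch_T^{-1}\cdot\|z_T-I_{h,T}z_T\|_{L^\infty(T^\pm)}\leq Ch_T^{-1}$, which is the first line of the lemma. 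The only delicate ingredient is the uniform lower bound on the denominator; this has already been supplied by (\ref{fengmu_jie}) via the Cartesian mesh structure, so the remaining work is routine bookkeeping and no further obstacle is anticipated.
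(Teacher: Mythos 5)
Your proposal is correct and follows exactly the route the paper intends: the paper gives no detailed proof, stating only that the lemma follows from the boxed explicit formulas, the definition of $z_T$ and $w_T$ in (\ref{def_zw}), and standard basis-function estimates, and your term-by-term bounds (including the uniform lower bound on the denominator from (\ref{fengmu_jie})) fill in precisely that bookkeeping. No gaps.
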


\subsection{Explicit formulas for correction functions}
Similarly to (\ref{decomp1}), we can write
$
(\mathbf{u}_h^J,p_h^J)|_T=\sum_{i=1}^{N}c_i^J(\mathbf{v}^{J_i}, q^{J_i}),
$
where the $c_i^J$'s  are constants to be determined. Substituting this equation into (\ref{def_uhj}) yields
$
\sum_{i=1}^{N}[\sigma(\mu_h,\mathbf{v}^{J_i}, q^{J_i})\mathbf{n}_h]_{\Gamma_{h,T}}c_i^J={\rm avg}_{\Gamma_{R_T}}(\mathbf{g}).
$
Analogous to (\ref{eq_cj2}), we have
\begin{equation*}
\mu^+(1+(\mu^-/\mu^+-1)\nabla I_{h,T}w_T\cdot \mathbf{n}_h)c_i^J=\mathbf{t}_{i,h}^T{\rm avg}_{\Gamma_{R_T}}(\mathbf{g}),~i=1,...,N-1,~~c_N^J=\mathbf{n}_{h}^T{\rm avg}_{\Gamma_{R_T}}(\mathbf{g}).
\end{equation*}
Combining this with Lemma~\ref{lem_vj} yields the following explicit formulas:
\begin{equation}\label{ex_uphj}
\boxed{
\mathbf{u}_h^J|_T=\sum_{i=1}^{N-1}\frac{\mathbf{t}_{i,h}^T{\rm avg}_{\Gamma_{R_T}}(\mathbf{g})(w_T-I_{h,T}w_T)\mathbf{t}_{i,h}}{\mu^+(1+(\mu^-/\mu^+-1)\nabla I_{h,T}w_T\cdot \mathbf{n}_h)},~~p_h^J|_T=(z_T-I_{h,T}z_T)\mathbf{n}_{h}^T{\rm avg}_{\Gamma_{R_T}}(\mathbf{g}).}
\end{equation}

\section{Approximation capabilities of IFE spaces}\label{sec_approximation}
We first introduce some norms,  extensions and interpolation operators.
For all non-negative integers $m$, define broken Sobolev spaces $H^m(\cup D^\pm)=\{v\in L^2(D)  : v|_{D^\pm} \in H^m(D^\pm)\}$ equipped with the norm $\|\cdot\|_{H^m(\cup D^\pm)}$ and the  semi-norm $|\cdot|_{H^m(\cup D^\pm)}$ satisfying 
$
\|\cdot\|^2_{H^m(\cup D^\pm)}=\|\cdot\|^2_{H^m(D^+)}+\|\cdot\|^2_{H^m(D^-)}$ and $|\cdot|^2_{H^m(\cup D^\pm)}=|\cdot|^2_{H^m(D^+)}+|\cdot|^2_{H^m(D^-)}.
$
For convergence analysis, we assume the solution to (\ref{weakform}) belongs to the space
\begin{equation}\label{def_h2h1}
\begin{aligned}
\widetilde{\boldsymbol{H}^2H^1}(\mathbf{g}):=\{ (\mathbf{v}, q) \in (H^2(\cup\Omega^\pm)^N, H^1(\cup\Omega^\pm)) : [\sigma(\mu,\mathbf{v},q)\mathbf{n}]_\Gamma=\mathbf{g},  [\mathbf{v}]_\Gamma=\mathbf{0}, [\nabla\cdot \mathbf{v}]_\Gamma=0 \}.
\end{aligned}
\end{equation}
For any $v\in H^m(\cup\Omega^\pm)$, we let $v^\pm=v|_{\Omega^\pm}$. There exist extensions of $v^\pm$, denoted by $v_E^\pm$, such that (see \cite{Gilbargbook})
\begin{equation}\label{extension}
\|v_E^\pm\|_{H^m(\Omega)}\leq C\|v^\pm\|_{H^m(\Omega^\pm)}.
\end{equation}
For any $v\in P_k(\cup T_h^\pm)$, with a small ambiguity of notation,  we also use  $v^\pm$  to represent the polynomial extensions of $v|_{T_h^\pm}$, i.e.,
$v^\pm\in  P_k(\mathbb{R}^N)$ and $v^\pm|_{T_h^\pm}=v|_{T_h^\pm}.$

Given two functions $v^+$ and $v^-$, we will frequently  use the notation
$
[\![v^\pm]\!](\mathbf{x}):=v^+(\mathbf{x})-v^-(\mathbf{x})
$ in the analysis.
Therefore, for any $v\in H^m(\cup \Omega^\pm)$, we have
$[\![v_E^\pm]\!](\mathbf{x})=v_E^+(\mathbf{x})-v_E^-(\mathbf{x})$ for all $\mathbf{x}\in \Omega,$
which can be viewed as an extension of the jump $[v]_\Gamma$.

The extensions of tangential gradients along  surfaces are defined as follows. Recalling $\mathbf{n}$ is well-defined in $U(\Gamma,\delta_0)$, we define 
$$(\nabla_\Gamma v)(\mathbf{x}):=\nabla v-(\mathbf{n}\cdot \nabla v)\mathbf{n}~\forall \mathbf{x}\in U(\Gamma,\delta_0).$$
Similarly, for any $T\in\mathcal{T}_h^\Gamma$, we define 
$$(\nabla_{\Gamma_{h,T}^{ext}} v)(\mathbf{x}):=\nabla v-(\mathbf{n}_h(\mathbf{x}_T^*) \cdot \nabla v)\mathbf{n}_h(\mathbf{x}_T^*) ~\forall \mathbf{x}\in R_T.$$
By (\ref{ass_Gamma_h}) and the fact $\mathbf{n}\in C^1(\overline{R_T})^N$, we have 
\begin{equation}\label{ineq_nh_BT}
|\mathbf{n}_h(\mathbf{x}_T^*)-\mathbf{n}(\mathbf{x})|\leq |\mathbf{n}_h(\mathbf{x}_T^*)-\mathbf{n}(\mathbf{x}_T^S)|+|\mathbf{n}(\mathbf{x}_T^S)-\mathbf{n}(\mathbf{x})|\leq Ch_T ~~\forall\mathbf{x}\in \overline{R_T}.
\end{equation}
Therefore, by (\ref{ineq_nh_BT}) and (\ref{tac_ine}) there holds
\begin{equation}\label{ineq_sur_sca}
\begin{aligned}
\|\nabla_{\Gamma_{h,T}^{ext}}v-&\nabla_{\Gamma}v\|_{L^2(\Gamma_{R_T})}=\left\|(\mathbf{n}\cdot \nabla v)(\mathbf{n}-\mathbf{n}_h(\mathbf{x}_T^*))+((\mathbf{n}-\mathbf{n}_h(\mathbf{x}_T^*))\cdot \nabla v)\mathbf{n}_h(\mathbf{x}_T^*)\right\|_{L^2(\Gamma_{R_T})}\\
&\leq C\|\mathbf{n}-\mathbf{n}_h(\mathbf{x}_T^*)\|_{L^\infty(R_T)}\|\nabla v\|_{L^2(\Gamma_{R_T})}\leq Ch_T^{1/2}|v|_{H^1(R_T)}+Ch_T^{3/2}|v|_{H^2(R_T)}.
\end{aligned}
\end{equation}
The tangential gradient acting on $\mathbf{v}=(v_1,...,v_N)^T$ is defined by 
$\nabla_{\Gamma} \mathbf{v}=(\nabla_{\Gamma} v_1,...,\nabla_{\Gamma} v_N)^T.$

Let $\widetilde{I}_{h}$ be the Scott-Zhang interpolation mapping from $H^1(\Omega)$ to the standard $P_1$-conforming finite element space associated with $\mathcal{T}_h$ (see \cite{scott1990finite}). The operator acting on vector-valued functions are also denoted by $\widetilde{I}_{h}$ for simplicity.
The restriction on an element $T$ is defined by $\widetilde{I}_{h,T}v=(\widetilde{I}_{h}v)|_T$.
For all $T\in\mathcal{T}_h$, let $\omega_{T}= {\rm interior}(\bigcup\{\overline{T^\prime} : \overline{T^\prime} \cap \overline{T}\not=\emptyset,  T^\prime\in\mathcal{T}_h \})$.
We recall the standard interpolation error estimates (see \cite{scott1990finite,brenner2008mathematical}):
\begin{equation}\label{std_inter_est}
\begin{aligned}
&|v-\widetilde{I}_{h,T}v|_{H^m(T)}\leq Ch_T^{l-m}|v|_{H^l(\omega_{T})},~~0\leq m\leq l\leq 2,\\
&|v-I_{h,T}v|_{H^m(T)}\leq Ch_T^{2-m}|v|_{H^2(T)},~~m=0,1.
\end{aligned}
\end{equation}
For every non-interface $T\in\mathcal{T}_h$ with $T\subset \Omega^s$, $s=+$ or $-$, we let 
$$\Pi_{h,T}(\mathbf{v}, q)=(I_{h,T}\mathbf{v}, \widetilde{I}_{h,T}q_E^s)~~ \mbox{ and } ~~\widetilde{\Pi}_{h,T}(\mathbf{v}, q)=(\widetilde{I}_{h,T}\mathbf{v}_E^s, \widetilde{I}_{h,T}q_E^s).$$
And, on each interface element $T\in\mathcal{T}_h^\Gamma$, we define $\Pi_{h,T}^{\Gamma}$ and $\widetilde{\Pi}_{h,T}^{\Gamma}$, respectively,  by 
\begin{equation}
\begin{aligned}
&(\mathbf{v}_I,q_I):=\Pi_{h,T}^{\Gamma}(\mathbf{v},q)\in \widetilde{\boldsymbol{P}_1P_1}(T),\\
&\mathbf{v}_I(\mathbf{a}_{i,T})=\mathbf{v}(\mathbf{a}_{i,T}),~q_I(\mathbf{a}_{i,T})= (\widetilde{I}_{h}q^s_E)(\mathbf{a}_{i,T}),~~\forall i\in \mathcal{I}_{N+1},
\end{aligned}
\end{equation}
and 
\begin{equation}\label{def_tilde_ih}
\begin{aligned}
&(\widetilde{\mathbf{v}}_I,q_I):=\widetilde{\Pi}_{h,T}^{\Gamma}(\mathbf{v},q)\in \widetilde{\boldsymbol{P}_1P_1}(T),\\
&\widetilde{\mathbf{v}}_I(\mathbf{a}_{i,T})=(\widetilde{I}_{h}\mathbf{v}_E^s)(\mathbf{a}_{i,T}),~q_I(\mathbf{a}_{i,T})=(\widetilde{I}_{h}q^s_E)(\mathbf{a}_{i,T}),~~\forall i\in \mathcal{I}_{N+1},
\end{aligned}
\end{equation}
where $s=+$ if $\mathbf{a}_{i,T}\in\Omega^+$ and $s=-$ if $\mathbf{a}_{i,T}\in\overline{\Omega^-}$. 
The global IFE interpolation operators $\Pi_{h}^{\Gamma}$ and $\widetilde{\Pi}_{h}^{\Gamma}$ are then defined by 
\begin{equation*}
(\Pi_h^{\Gamma}(\mathbf{v},q))|_{T}=
\left\{
\begin{aligned}
&\Pi_{h,T}^{\Gamma}(\mathbf{v},q),\quad&&\mbox{ if }T\in\mathcal{T}_h^\Gamma,\\
&\Pi_{h,T} (\mathbf{v},q),\quad&&\mbox{ if }T\in\mathcal{T}_h^{non}.
\end{aligned}\right.
\quad 
\end{equation*}
and 
\begin{equation*}
(\widetilde{\Pi}_h^{\Gamma}(\mathbf{v},q))|_{T}=
\left\{
\begin{aligned}
&\widetilde{\Pi}_{h,T}^{\Gamma}(\mathbf{v},q),\quad&&\mbox{ if }T\in\mathcal{T}_h^\Gamma,\\
&\widetilde{\Pi}_{h,T} (\mathbf{v},q),\quad&&\mbox{ if }T\in\mathcal{T}_h^{non}.
\end{aligned}\right.
\quad 
\end{equation*}
In our analysis, we  use $\Pi_{\mathbf{v}}^{\Gamma}\mathbf{v}$ and $\Pi_{\mathbf{v},q}^{\Gamma} q$ to denote the velocity and pressure components of  $\Pi_h^{\Gamma}(\mathbf{v},q)$, respectively, i.e.,
$ (\Pi_{\mathbf{v}}^{\Gamma}\mathbf{v}, \Pi_{\mathbf{v},q}^{\Gamma} q)=\Pi_h^{\Gamma}(\mathbf{v},q).$
As noted in Remark~\ref{rema_dep},  $\Pi_{\mathbf{v}}^{\Gamma}\mathbf{v}$ is solely dependent on the velocity $\mathbf{v}$, while $\Pi_{\mathbf{v},q}^{\Gamma} q$ depends on both  the velocity $\mathbf{v}$ and the pressure $q$.
Similarly, $\widetilde{\Pi}_{\mathbf{v}}^{\Gamma}\mathbf{v}$ and $\widetilde{\Pi}_{\mathbf{v},q}^{\Gamma} q$ are used to denote the velocity and pressure components of  $\widetilde{\Pi}_h^{\Gamma}(\mathbf{v},q)$, respectively.
\begin{remark}
$\Pi_h^{\Gamma}(\mathbf{v},q)$ is well defined for all $(\mathbf{v},q)\in (H^2(\cup \Omega^\pm)^N, H^1(\cup \Omega^\pm))$. To establish inf-sup stability, however, we are required to interpolate $(\mathbf{v},q)$ with $\mathbf{v}\in H^1(\Omega)^N$ (see Lemma~\ref{lem_infsup1}).
This is why we introduce the operator $\widetilde{\Pi}_h^{\Gamma}$.
\end{remark}

For any $(\mathbf{v},q) \in (H^2(\Omega)^N, H^1(\Omega))$, we define $\Pi_{h}(\mathbf{v},q)=(I_h\mathbf{v},\widetilde{I}_hq)$, and for any  $(\mathbf{v},q)\in (H^2(\cup\Omega^\pm)^N, H^1(\cup\Omega^\pm)) $, we define $E^{BK}_h$ and $\Pi_h^{BK}$ by
$$(E^{BK}_h(\mathbf{v},q))|_{\Omega_h^\pm}=(\mathbf{v}_E^\pm,q_E^\pm)|_{\Omega_h^\pm}~~\mbox{ and  } ~~
(\Pi_h^{BK}(\mathbf{v},q))|_{\Omega_h^\pm}=(\Pi_{h}(\mathbf{v}_E^\pm,q_E^\pm))|_{\Omega_h^\pm}.$$ 
To handle the surface force, we define
\begin{equation}\label{def_ifej}
\Pi_h^{\Gamma,J}(\mathbf{v},q):=(\Pi_{\mathbf{v}}^{\Gamma,J}\mathbf{v},\Pi_{\mathbf{v},q}^{\Gamma,J}q)=\Pi_h^{\Gamma}(\mathbf{v},q)+(\mathbf{u}_h^J, p_h^J).
\end{equation}
One can easily easy see that for all  $T\in\mathcal{T}_h^\Gamma$,
\begin{equation}\label{bk_dof_eq}
(\Pi_h^{\Gamma,J}(\mathbf{v},q))|_T\in (\boldsymbol{P}_1(\cup T_h^\pm), P_1(\cup T_h^\pm)),~ {\rm DoF}_{i,T}(\Pi_{h}^{BK}(\mathbf{v}, q)={\rm DoF}_{i,T}(\Pi_{h}^{\Gamma,J}(\mathbf{v}, q)).
\end{equation}

\subsection{Interpolation error decomposition}
To prove the optimal approximation capabilities of the IFE space,  it suffices to consider interface elements. For each $T\in\mathcal{T}_h^\Gamma$, 
the triangle inequality gives
\begin{equation}\label{err_pro_it1}
\interleave(\mathbf{u}, p)- \Pi_{h}^{\Gamma,J}(\mathbf{u}, p)\interleave_{m,T}\leq \interleave(\mathbf{u}, p)- E_h^{BK}(\mathbf{u}, p)\interleave_{m,T}+ \interleave E_h^{BK}(\mathbf{u}, p)-\Pi_{h}^{\Gamma,J}(\mathbf{u}, p)\interleave_{m,T},
\end{equation}
where $\interleave\cdot\interleave_{m,T}$ is a norm defined by
$$\interleave(\mathbf{v},q)\interleave_{m,T}^2=|\mathbf{v}|^2_{H^m(T)}+\|q\|^2_{L^2(T)}.$$
It should be noted that in some situations, $\mathbf{v}$ may be discontinuous across $\Gamma_h$, and thus, $\mathbf{v}$ does not belong to $H^m(T)$ with $m=1$. For simplicity, we also use the notation $|\mathbf{v}|_{H^m(T)}$ to represent the piecewise 
$H^m$ norm.
The first term relates to the error caused by the mismatch of $\Gamma$ and $\Gamma_h$, which will be analyzed later. For the second term, we have 
\begin{equation}\label{err_pro_it2}
\interleave E_h^{BK}(\mathbf{u}, p)-\Pi_{h}^{\Gamma,J}(\mathbf{u}, p)\interleave^2_{m,T}\leq \sum_{s=\pm}\|(\mathbf{u}_E, p_E)^s- (\Pi_{h}^{\Gamma,J}(\mathbf{u}, p))^s\|^2_{m,T}.
\end{equation}
Our goal is to estimate $(\mathbf{u}_E^\pm, p_E^\pm)- (\Pi_{h}^{\Gamma,J}(\mathbf{u}, p))^\pm$ on each interface element $T$. Obviously, we have
\begin{equation}\label{int_pro_cha}
\begin{aligned}
(\mathbf{u}_E^\pm, p_E^\pm)- (\Pi_{h}^{\Gamma,J}(\mathbf{u}, p))^\pm
=\underbrace{(\mathbf{u}_E^\pm, p_E^\pm)-\Pi_{h}(\mathbf{u}_E^\pm, p_E^\pm)}_{({\rm I})_1}+\underbrace{\Pi_{h}(\mathbf{u}_E^\pm, p_E^\pm)- (\Pi_{h}^{\Gamma,J}(\mathbf{u}, p))^\pm }_{({\rm I})_2}.
\end{aligned}
\end{equation}
The estimate of Term $({\rm I})_1$ is trivial and  the main task is to estimate $({\rm I})_2$. 
By definition, one can see that the functions in $({\rm I})_2$ are polynomial extensions of $(\Pi_{h}^{BK}(\mathbf{u}, p)-\Pi_{h}^{\Gamma,J}(\mathbf{u}, p))|_{T_h^\pm}$, i.e., 
\begin{equation}\label{term2}
({\rm I})_2=\Pi_{h}(\mathbf{u}_E^\pm, p_E^\pm)- (\Pi_{h}^{\Gamma,J}(\mathbf{u}, p))^\pm=(\Pi_{h}^{BK}(\mathbf{u}, p)-\Pi_{h}^{\Gamma,J}(\mathbf{u}, p))^\pm.
\end{equation}
Our idea is to decompose  $(\Pi_{h}^{BK}(\mathbf{u}, p)-\Pi_{h}^{\Gamma,J}(\mathbf{u}, p))|_T$  by ${\rm DoF}_{i,T}$, $i=1,..., (N+1)^2$ and others related to interface jump conditions. For simplicity of notation, we let $\boldsymbol{\nu}_{i,h}:=\mathbf{t}_{i,h}$, $i=1,...N-1$ and $\boldsymbol{\nu}_{N,h}:=\mathbf{n}_{h}$. Define functionals $\Im_{i,T}$, $i=1,..., (N+1)^2$ for any $(\mathbf{v},q)\in (\boldsymbol{P}_1(\cup T_h^\pm), P_1(\cup T_h^\pm))$ by
\begin{subequations}\label{def_dof_jump0}
\begin{align}
&\Im_{i,T}(\mathbf{v},q)=[\![\mathbf{v}^\pm]\!](\mathbf{x}_T^P)\cdot\boldsymbol{\nu}_{i,h},~&&i=1,..., N,\label{def_dof_jump1}\\
&\Im_{N+(i-1)N+j,T}(\mathbf{v},q)=[\![\nabla\mathbf{v}^\pm\boldsymbol{\nu}_{i,h}]\!]\cdot\boldsymbol{\nu}_{j,h},~&&i=1,..., N-1,~j=1,...,N,\label{def_dof_jump2}\\
&\Im_{N^2+i,T}(\mathbf{v},q)=[\![\sigma(\mu^\pm, \mathbf{v}^\pm ,q^\pm )\mathbf{n}_h]\!](\mathbf{x}_T^*)\cdot\boldsymbol{\nu}_{i,h}, ~&&i=1,..., N,\label{def_dof_jump3}\\
&\Im_{N^2+N+1,T}(\mathbf{v},q)=[\![\nabla \cdot \mathbf{v}^\pm ]\!],&&\label{def_dof_jump4}\\
&\Im_{N^2+N+1+i,T}(\mathbf{v},q)=[\![\nabla q^\pm ]\!]\cdot \boldsymbol{\nu}_{i,h},~&&i=1,..., N.\label{def_dof_jump5}
\end{align}
\end{subequations}
If ${\rm DoF}_{i,T}(\mathbf{v},q)=\Im_{i,T}(\mathbf{v},q)=0$ for all $i=1,..., (N+1)^2$, there holds $(\mathbf{v},q)=(\mathbf{0},0)$ by Lemma~\ref{lem_IFEbasis}. This 
implies that the function in the space $(\boldsymbol{P}_1(\cup T_h^\pm), P_1(\cup T_h^\pm))$ is uniquely determined by ${\rm DoF}_{i,T}$ and $\Im_{i,T}$.
Defining the auxiliary functions $(\boldsymbol{\theta}_{i,T}, \vartheta_{i,T})\in (\boldsymbol{P}_1(\cup T_h^\pm),~ P_1(\cup T_h^\pm))$ by
$${\rm DoF}_{i,T}(\boldsymbol{\theta}_{i,T}, \vartheta_{i,T})=0~\mbox{ and } ~\Im_{j,T}(\boldsymbol{\theta}_{i,T}, \vartheta_{i,T})=\delta_{ij} ~\mbox{ for all } i, j=1,..., (N+1)^2,$$
 and recalling (\ref{bk_dof_eq}), we have the following interpolation error decomposition:
\begin{equation}\label{decomp_int_er}
(\Pi_{h}^{BK}(\mathbf{u}, p)-\Pi_{h}^{\Gamma,J}(\mathbf{u}, p))|_T=\sum_{i}\alpha_{i,T}(\boldsymbol{\theta}_{i,T}, \vartheta_{i,T}),~~ \alpha_{i,T}:=\Im_{i,T}(\Pi_{h}^{BK}(\mathbf{u}, p)-\Pi_{h}^{\Gamma,J}(\mathbf{u}, p)).
\end{equation}

Next, we estimate the constants $\alpha_{i,T}$ and the  auxiliary functions $(\boldsymbol{\theta}_{i,T}, \vartheta_{i,T})$.
\begin{lemma}\label{lem_auxest}
The auxiliary functions $(\boldsymbol{\theta}_{i,T}, \vartheta_{i,T})$ exist uniquely and satisfy  for $m=0,1$, that
\begin{equation*}
|\boldsymbol{\theta}_{i,T}^{\pm}|_{W_\infty^m(T)} \left\{
\begin{aligned}
&\leq Ch_T^{-m},~~ &&i=1,...,N,\\
&\leq Ch_T^{1-m}, &&i=N+1,..., N^2+N+1,\\
&=0,&& i=N^2+N+2,..., (N+1)^2,
\end{aligned}\right. 
\end{equation*}
\begin{equation*}
\|\vartheta_{i,T}^{\pm}\|_{L^\infty(T)} \left\{
\begin{aligned}
&\leq Ch_T^{-1},~~ &&i=1,...,N,\\
&\leq C, &&i=N+1,..., N^2+N+1,\\
&\leq Ch_T,&&i=N^2+N+2,..., (N+1)^2.
\end{aligned}\right.
\end{equation*}
\end{lemma}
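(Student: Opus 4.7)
The plan is to establish existence and uniqueness first, by a dimension-count argument paired with the injectivity already furnished by Lemma~\ref{lem_IFEbasis}, and then to obtain the pointwise bounds by writing each auxiliary pair explicitly in the tangent--normal frame $\{\mathbf{t}_{1,h},\ldots,\mathbf{t}_{N-1,h},\mathbf{n}_h\}$, built from the elementary blocks $z_T-I_{h,T}z_T$ and $w_T-I_{h,T}w_T$ that already drive the IFE basis construction.

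For existence and uniqueness, the space $(\boldsymbol{P}_1(\cup T_h^\pm),P_1(\cup T_h^\pm))$ has total dimension $2(N+1)^2$, and the $(N+1)^2$ nodal conditions ${\rm DoF}_{i,T}=0$ cut this down to a subspace $S_T$ of dimension exactly $(N+1)^2$. The linear map $(\mathbf{v},q)\mapsto(\Im_{1,T},\ldots,\Im_{(N+1)^2,T})$ then sends $S_T$ to $\mathbb{R}^{(N+1)^2}$. To check that it is injective, suppose both ${\rm DoF}_{i,T}(\mathbf{v},q)=0$ and $\Im_{i,T}(\mathbf{v},q)=0$ for every $i$; then the identifications (\ref{def_dof_jump0}) coincide componentwise with the discrete interface conditions (\ref{dis_jp0}), so $(\mathbf{v},q)\in\widetilde{\boldsymbol{P}_1P_1}(T)$ has zero nodal values, and Lemma~\ref{lem_IFEbasis} forces $(\mathbf{v},q)=(\mathbf{0},0)$. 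Hence the map is a bijection and each $(\boldsymbol{\theta}_{i,T},\vartheta_{i,T})$ is well defined.

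To get the size estimates I would construct each auxiliary pair as a linear combination of prototype piecewise-linear functions whose scalings are known directly. For the nodal-jump block $i\in\{1,\ldots,N\}$, the velocity prototype is $-(z_T-I_{h,T}z_T)\boldsymbol{\nu}_{i,h}$, of size $O(1)$ with gradient $O(h_T^{-1})$; the accompanying pressure prototype, needed to cancel the parasitic $[\sigma\mathbf{n}_h]$, is of size $O(h_T^{-1})$. For the derivative-jump block $i\in\{N+1,\ldots,N^2+N+1\}$ the stress directions are covered by $(w_T-I_{h,T}w_T)\mathbf{t}_{i,h}$ exactly as in (\ref{def_vjqj}), while the tangential-gradient and divergence directions use the piecewise-linear function that equals $((\mathbf{x}-\mathbf{x}_T^P)\cdot\mathbf{t}_{i,h})\boldsymbol{\nu}_{j,h}$ on $T_h^+$ and $\mathbf{0}$ on $T_h^-$, minus its $I_{h,T}$-interpolant; both prototypes are $O(h_T)$ with $O(1)$ derivative. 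For the pressure-gradient block $i\in\{N^2+N+2,\ldots,(N+1)^2\}$, set $\boldsymbol{\theta}\equiv\mathbf{0}$ (consistent with the zero bound) and take the scalar prototype equal to $(\mathbf{x}-\mathbf{x}_T^*)\cdot\boldsymbol{\nu}_{i,h}$ on $T_h^+$ and zero on $T_h^-$, minus its $I_{h,T}$-interpolant, which is $O(h_T)$ in $L^\infty$. The basic estimates $|w_T-I_{h,T}w_T|_{W_\infty^m(T)}\leq Ch_T^{1-m}$ and $\|z_T-I_{h,T}z_T\|_{L^\infty(T)}\leq C$, $|z_T-I_{h,T}z_T|_{W_\infty^1(T)}\leq Ch_T^{-1}$ follow from (\ref{std_inter_est}) applied piecewise, and together with Lemma~\ref{lem_basisest} they produce the announced scalings group by group.

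The main obstacle is that these prototypes are not automatically aligned with the $\Im_{j,T}$ basis: a nodal-jump prototype for $\mathbf{v}$ produces a spurious $[\sigma\mathbf{n}_h]$ proportional to $[\mu^\pm]$, and a tangential-derivative prototype pollutes both the divergence and the stress. To decouple the functionals I would invert a small block linear system of the same type as (\ref{eq_cj2}). Its well-conditioning on Cartesian meshes is secured by the bound $0\leq \nabla I_{h,T}w_T\cdot\mathbf{n}_h\leq 1$ cited around (\ref{fengmu_jie}), which makes the diagonal factor bounded away from zero by a quantity depending only on $\mu^+/\mu^-$. Consequently the combination coefficients are uniform in the interface location, each $(\boldsymbol{\theta}_{i,T},\vartheta_{i,T})$ inherits the scaling of its dominant prototype, and the three $L^\infty$ regimes for $\boldsymbol{\theta}$ and $\vartheta$ stated in the lemma follow.
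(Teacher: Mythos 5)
Your proposal is correct and follows essentially the same route as the paper: uniqueness via the injectivity supplied by Lemma~\ref{lem_IFEbasis} plus a dimension count, followed by an explicit construction in the $\mathbf{t}_{i,h}$--$\mathbf{n}_h$ frame from the elementary blocks $z_T-I_{h,T}z_T$ and $w_T-I_{h,T}w_T$, with (\ref{fengmu_jie}) guaranteeing that the decoupling coefficients are uniform in the interface location. The only difference is bookkeeping: the paper takes a one-sided prototype ($(\mathbf{v}_i^-,q_i^-)=(\mathbf{0},0)$, $\Im_{j,T}(\mathbf{v}_i,q_i)=\delta_{ij}$) and subtracts its IFE interpolant, so the cancellation of the parasitic stress jump is absorbed into the bounds of Lemma~\ref{lem_basisest}, whereas you subtract the nodal interpolant and correct the stress block explicitly through the (\ref{eq_cj2})-type system --- the two organizations amount to the same computation and yield the same scalings.
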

\begin{proof}
The proof is in the same spirit  as Lemma~\ref{lem_vj}.
Define $(\mathbf{v}_i, q_i)\in (\boldsymbol{P}_1(\cup T_h^\pm), P_1(\cup T_h^\pm))$ having the same interface jump conditions as $(\boldsymbol{\theta}_{i,T}, \vartheta_{i,T})$ by setting
\begin{equation}\label{pro_vj_aux}
(\mathbf{v}_i^-, q_i^-)=(\mathbf{0}, 0)\mbox{ and }\Im_{j,T}(\mathbf{v}_i,q_i)=\delta_{ij},~i,j=1,..., (N+1)^2.
\end{equation}
Then the auxiliary function can be constructed as
\begin{equation}\label{pro_cos_theta}
(\boldsymbol{\theta}_{i,T}, \vartheta_{i,T})=(\mathbf{v}_i, q_i)-(I_{h,T}\mathbf{v}_i, I_{h,T}q_i)=(\mathbf{v}_i, q_i)-\sum_{j}{\rm DoF}_{j,T}(\mathbf{v}_i, q_i)(\boldsymbol{\phi}_{j,T}^{\Gamma}, \varphi_{j,T}^{\Gamma}).
\end{equation}
Similarly to (\ref{pro_vi3}), using the $\mathbf{t}_{i,h}$-$\mathbf{n}_h$ coordinate system one can deduce from (\ref{pro_vj_aux}) and (\ref{def_dof_jump0})  that the functions $(\mathbf{v}_i^+, q_i^+)$ exist uniquely and satisfy for all $l,m=1,...,N$, that
\begin{equation*}
\begin{aligned}
&|\mathbf{v}_i^+(\mathbf{x}_T^P)|=1,~ |\partial_{\boldsymbol{\nu}_{m,h}} (\mathbf{v}_i^+\cdot \boldsymbol{\nu}_{l,h})|=0,~ q_i^+=0, &&~i=1,...,N,\\
&|\mathbf{v}_i^+(\mathbf{x}_T^P)|=0,~ |\partial_{\boldsymbol{\nu}_{m,h}} (\mathbf{v}_i^+\cdot \boldsymbol{\nu}_{l,h})|=0,1, \mbox{ or } (\mu^+)^{-1}, ~|q_i^+|=0, 1,\mbox{ or }2\mu^+,&&i=N+1,..., N^2+N+1,\\
&\mathbf{v}_i^+=\mathbf{0},~ q_i^+(\mathbf{x}_T^*)=0,~ |\partial_{\boldsymbol{\nu}_{m,h}} q_i^+|=0 \mbox{ or } 1, &&i=N^2+N+2,..., (N+1)^2.
\end{aligned}
\end{equation*}
The lemma follows from the above identities, (\ref{pro_cos_theta}), (\ref{ineq_xpxs}) and Lemma~\ref{lem_basisest}.
\end{proof}

\begin{lemma}\label{lem_alphas}
Let $\alpha_{i,T}$ be defined in (\ref{decomp_int_er}) and assume $(\mathbf{u},p)\in\widetilde{\boldsymbol{H}^2H^1}(\mathbf{g})$, then there holds
\begin{subequations}
\begin{align}
&\alpha_{i,T}^2\leq Ch_T^{4-N}\sum_{s=\pm}\sum_{m=1,2}|\mathbf{u}_E^s|^2_{H^m(T)},~~i=1,...,N,\label{est_alpa1}\\
&\alpha_{i,T}^2\leq Ch_T^{2-N}\sum_{s=\pm}\sum_{m=1,2}|\mathbf{u}_E^s|^2_{H^m(R_T)},~~i=N+1,...,N^2,\label{est_alpa2}\\
\begin{split}
&\alpha_{i,T}^2\leq Ch_T^{2-N}\sum_{s=\pm}\sum_{m=1,2}(|\mathbf{u}_E^s|^2_{H^m(R_T)}+|p_E^s|^2_{H^{m-1}(\omega_T\cup R_T)}),~~~i=N^2+1,...,N^2+N,\\
\end{split}\label{est_alpa3}\\
&\alpha_{i,T}^2\leq Ch_T^{2-N}\sum_{s=\pm}|\mathbf{u}_E^s|^2_{H^2(R_T)},~i=N^2+N+1,\label{est_alpa4}\\
&\alpha_{i,T}^2\leq Ch_T^{-N}\sum_{s=\pm}|p_E^s|^2_{H^1(\omega_T)},~~~~i=N^2+N+2,...,(N+1)^2.\label{est_alpa5}
\end{align}
\end{subequations}
\end{lemma}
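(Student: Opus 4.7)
The plan rests on the identity $\alpha_{i,T}=\Im_{i,T}(\Pi_h^{BK}(\mathbf{u},p))-\Im_{i,T}(\Pi_h^{\Gamma,J}(\mathbf{u},p))$ combined with the matching pair of interface conditions: on one side, the discrete jump conditions (\ref{dis_jp0}) (with the right-hand side ${\rm avg}_{\Gamma_{R_T}}(\mathbf{g})$ built into $\Pi_h^{\Gamma,J}$ for the stress), and on the other, the exact conditions $[\mathbf{u}]_\Gamma=\mathbf{0}$, $[\nabla\cdot\mathbf{u}]_\Gamma=0$, $[\sigma(\mu,\mathbf{u},p)\mathbf{n}]_\Gamma=\mathbf{g}$ carried by $(\mathbf{u},p)\in\widetilde{\boldsymbol{H}^2H^1}(\mathbf{g})$. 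After subtracting, each $\alpha_{i,T}$ becomes a linear combination of nodal interpolation errors of $\mathbf{u}_E^\pm$ and $p_E^\pm$ plus the geometric defect between $\Gamma$ and $\Gamma_h$ quantified by (\ref{ass_Gamma_h}), (\ref{ineq_xpxs}), and (\ref{ineq_nh_BT}).

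For (\ref{est_alpa1}), $\alpha_{i,T}=[\![I_{h,T}\mathbf{u}_E^\pm]\!](\mathbf{x}_T^P)\cdot\boldsymbol{\nu}_{i,h}$ splits into two pointwise Lagrange interpolation errors at $\mathbf{x}_T^P$ plus $[\![\mathbf{u}_E^\pm]\!](\mathbf{x}_T^P)=[\![\mathbf{u}_E^\pm]\!](\mathbf{x}_T^P)-[\![\mathbf{u}_E^\pm]\!](\mathbf{x}_T^S)$, the second equality using $\mathbf{x}_T^S\in\Gamma$. A scaled Sobolev embedding for $H^2$ bounds the interpolation errors by $Ch_T^{2-N/2}|\mathbf{u}_E|_{H^2}$, and the remaining difference is controlled via $|\mathbf{x}_T^P-\mathbf{x}_T^S|\leq Ch_T^2$ from (\ref{ineq_xpxs}). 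For (\ref{est_alpa2}) and (\ref{est_alpa4}), $\alpha_{i,T}$ is piecewise constant on $T$, so the elementary bound $|\alpha_{i,T}|^2\leq|\Gamma_{R_T}|^{-1}\int_{\Gamma_{R_T}}|\alpha_{i,T}|^2$ with $|\Gamma_{R_T}|\geq Ch_T^{N-1}$ reduces the task to a surface $L^2$-estimate. After adding and subtracting $\nabla\mathbf{u}_E^\pm$ or $\nabla\cdot\mathbf{u}_E^\pm$, the interpolation pieces go via the trace inequality (\ref{tac_ine}) plus (\ref{std_inter_est}), while the exact-jump pieces use $[\![\nabla_\Gamma\mathbf{u}_E^\pm]\!]=\mathbf{0}$ and $[\![\nabla\cdot\mathbf{u}_E^\pm]\!]=0$ on $\Gamma$, combined with (\ref{ineq_sur_sca}) and $|\mathbf{n}-\mathbf{n}_h|\leq Ch_T$ to absorb the tangent-direction defect. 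For (\ref{est_alpa5}), the condition (\ref{dis_jp4}) forces $\nabla(\Pi_h^{\Gamma,J}p)^+=\nabla(\Pi_h^{\Gamma,J}p)^-$ everywhere on $T$, so $\alpha_{i,T}$ reduces to the constant $(\nabla\widetilde{I}_{h,T}p_E^+-\nabla\widetilde{I}_{h,T}p_E^-)\cdot\boldsymbol{\nu}_{i,h}$, which I bound by $Ch_T^{-N/2}\|\nabla\widetilde{I}_{h,T}p_E^\pm\|_{L^2(T)}$ and then by the $H^1$-stability of Scott--Zhang over $\omega_T$.

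The main obstacle is (\ref{est_alpa3}): velocity and pressure are coupled through the stress tensor, and the datum enters as the non-local average ${\rm avg}_{\Gamma_{R_T}}(\mathbf{g})$. I would interpolate through the chain
\begin{equation*}
[\![\sigma(\mu^\pm,\Pi_h^{BK}(\mathbf{u},p))\mathbf{n}_h]\!](\mathbf{x}_T^*)\to[\![\sigma(\mu^\pm,\mathbf{u}_E^\pm,p_E^\pm)\mathbf{n}_h]\!](\mathbf{x}_T^*)\to[\![\sigma(\mu^\pm,\mathbf{u}_E^\pm,p_E^\pm)\mathbf{n}]\!](\mathbf{x}_T^*)\to(\mathbf{g}\circ\mathbf{p}_h)(\mathbf{x}_T^*)\to{\rm avg}_{\Gamma_{R_T}}(\mathbf{g}),
\end{equation*}
where the first arrow is a pointwise interpolation error (producing the $|p_E^s|_{L^2(\omega_T)}$ contribution through the local stability of Scott--Zhang), the second uses $|\mathbf{n}-\mathbf{n}_h|\leq Ch_T$, the third uses the exact condition $[\sigma(\mu,\mathbf{u},p)\mathbf{n}]_\Gamma=\mathbf{g}$ at $\mathbf{p}_h(\mathbf{x}_T^*)\in\Gamma$ together with $|\mathbf{x}_T^*-\mathbf{p}_h(\mathbf{x}_T^*)|\leq Ch_T^2$ from (\ref{ph_esti}), and the last is exactly (\ref{avr_tac_ine}) applied to $\mathbf{g}\circ\mathbf{p}_h$ viewed as an $H^1(R_T)$ function, which explains the $R_T$-neighborhood and the $|p_E^s|_{H^1(R_T)}$ term in the final bound. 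Keeping all these defects compatible with the required $h_T^{2-N}$ scaling, without losing an $h_T$ at any intermediate step, is the technical heart of the argument.
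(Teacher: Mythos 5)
Your overall framework is the paper's: you isolate $\alpha_{i,T}$ as the difference between the jump functionals of the broken nodal interpolant $\Pi_h^{BK}(\mathbf{u},p)$ and those enforced by the IFE/correction construction, and your treatments of (\ref{est_alpa1}), (\ref{est_alpa2}), (\ref{est_alpa4}) and (\ref{est_alpa5}) coincide with the paper's proof: point evaluation at $\mathbf{x}_T^P$ shifted to $\mathbf{x}_T^S\in\Gamma$ via (\ref{ineq_xpxs}); constants bounded through their $L^2(\Gamma_{R_T})$ norms using $|\Gamma_{R_T}|\geq Ch_T^{N-1}$ and the trace inequality (\ref{tac_ine}); the tangential/normal defect absorbed by (\ref{ineq_sur_sca}) and (\ref{ineq_nh_BT}).

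The gap is in your chain for (\ref{est_alpa3}). Three of your arrows --- the ``pointwise interpolation error'' taking $[\![\sigma_I^\pm\mathbf{n}_h]\!](\mathbf{x}_T^*)$ to $[\![\sigma^\pm\mathbf{n}_h]\!](\mathbf{x}_T^*)$, the use of the exact stress condition ``at $\mathbf{p}_h(\mathbf{x}_T^*)\in\Gamma$'', and the final comparison of $(\mathbf{g}\circ\mathbf{p}_h)(\mathbf{x}_T^*)$ with ${\rm avg}_{\Gamma_{R_T}}(\mathbf{g})$ --- require evaluating $\nabla\mathbf{u}_E^\pm$, $p_E^\pm$ and $\mathbf{g}$ at a single point. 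Under the assumed regularity these are only $H^1$ functions (respectively the trace of an $H^1$ function), and $H^1\not\hookrightarrow C^0$ for $N=2,3$, so none of these point values is defined; moreover (\ref{avr_tac_ine}) controls $|{\rm avg}_{\Gamma_{R_T}}(v)|$, not the deviation $|v(\mathbf{x}_T^*)-{\rm avg}_{\Gamma_{R_T}}(v)|$, so your last arrow cannot be closed even formally. The paper's route avoids point evaluation of anything non-polynomial: it first replaces the point value $[\![\sigma_I^\pm\mathbf{n}_h]\!](\mathbf{x}_T^*)$ by the average ${\rm avg}_{\Gamma_{R_T}}([\![\sigma_I^\pm\mathbf{n}_{h,T}]\!])$ --- a purely polynomial step costing only $Ch_T|[\![\nabla\widetilde{I}_{h,T}p_E^\pm]\!]|$, because the strain of the linear velocity interpolant is constant --- and then compares the two \emph{averages} ${\rm avg}_{\Gamma_{R_T}}([\![\sigma_I^\pm\mathbf{n}_{h,T}]\!])$ and ${\rm avg}_{\Gamma_{R_T}}([\![\sigma^\pm\mathbf{n}]\!])={\rm avg}_{\Gamma_{R_T}}(\mathbf{g})$ via (\ref{avr_tac_ine}), which only needs $H^1(R_T)$ data. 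Reordering your chain so that the passage from point value to average happens first, while the integrand is still a polynomial, repairs the argument and lands exactly on the paper's estimates (\ref{pro_al_sig1})--(\ref{pro_al_sig2}).
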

\begin{proof}
It follows from (\ref{dis_jp0}) and (\ref{def_dof_jump0})  that 
$\Im_{i,T}(\Pi_{h}^{\Gamma}(\mathbf{u}, p))=0$. 
With this identity and  (\ref{def_ifej}), the constants $\alpha_{i,T}$ in (\ref{decomp_int_er}) becomes $\alpha_{i,T}=\Im_{i,T}(\Pi_{h}^{BK}(\mathbf{u}, p))-\Im_{i,T}(\mathbf{u}_h^J,p_h^J).$  Recalling the definition of $(\mathbf{u}_h^J,p_h^J)$ in subsection~\ref{subsec_corre_fun}, we further have
\begin{equation}\label{alpha_it0}
\alpha_{i,T}=\left\{
\begin{aligned}
&\Im_{i,T}(\Pi_{h}^{BK}(\mathbf{u}, p))-{\rm avg}_{\Gamma_{R_T}}([\![\sigma(\mu^\pm,\mathbf{u}_E^\pm,p_E^\pm)\mathbf{n}]\!])\cdot\boldsymbol{\nu}_{i,h},\quad &&i=N^2+1,...,N^2+N,\\
&\Im_{i,T}(\Pi_{h}^{BK}(\mathbf{u}, p)),&& Others.
\end{aligned}
\right.
\end{equation}
For $i=1,...,N$, by  (\ref{def_dof_jump1}), (\ref{ineq_xpxs}), (\ref{std_inter_est}) and the fact $[\![\mathbf{u}_E^\pm]\!](\mathbf{x}_T^S)=\mathbf{0}$, there holds 
\begin{equation*}
\begin{aligned}
|\alpha_{i,T}|&\leq|[\![I_{h,T} \mathbf{u}_E^\pm]\!](\mathbf{x}_T^P)|\leq  \left|[\![I_{h,T} \mathbf{u}_E^\pm]\!](\mathbf{x}_T^S)\right|+\left|[\![\nabla I_{h,T} \mathbf{u}_E^\pm]\!]\right|\left|\mathbf{x}_T^S-\mathbf{x}_T^P\right|\\
&\leq  \left|[\![I_{h,T} \mathbf{u}_E^\pm]\!](\mathbf{x}_T^S)\right|+Ch_T^2\left|[\![\nabla I_{h,T} \mathbf{u}_E^\pm]\!]\right|\\
&\leq  \left|[\![I_{h,T} \mathbf{u}_E^\pm-\mathbf{u}_E^\pm]\!](\mathbf{x}_T^S)\right|+Ch_T^{2-N/2}\left|([\![ I_{h,T} \mathbf{u}_E^\pm]\!]-[\![\mathbf{u}_E^\pm]\!])+[\![\mathbf{u}_E^\pm]\!]\right|_{H^1(T)}\\
&\leq Ch_T^{2-N/2}\sum_{s=\pm}|\mathbf{u}_E^s|_{H^2(T)}+Ch_T^{2-N/2}\sum_{s=\pm}|\mathbf{u}_E^s|_{H^1(T)},
\end{aligned}
\end{equation*}
which yields the desired result (\ref{est_alpa1}).

In what follows, we use polynomial extensions of $I_{h,T}\mathbf{u}_{E}^{\pm}$ and $\widetilde{I}_{h,T}p_{E}^{\pm}$, also denoted by $I_{h,T}\mathbf{u}_{E}^{\pm}$ and $\widetilde{I}_{h,T}p_{E}^{\pm}$, to carry out the analysis. In other words, $I_{h,T}\mathbf{u}_{E}^{\pm}$ and $\widetilde{I}_{h,T}p_{E}^{\pm}$ are polynomials defined on $\mathbb{R}^N$ and have the same expressions as $I_{h,T}\mathbf{u}_{E}^{\pm}$ and $\widetilde{I}_{h,T}p_{E}^{\pm}$ on $T$. We can prove the following estimates (see Appendix~\ref{sec_app_a}):
\begin{align}
&|v-\widetilde{I}_{h,T}v|_{H^m(R_T)}\leq Ch_T^{l-m}|v|_{H^l(\omega_{T}\cup R_T)},~~0\leq m\leq l\leq 2,\label{res_ap2}\\
&|v-I_{h,T}v|_{H^m(R_T)}\leq Ch_T^{2-m}|v|_{H^2(R_T)},~~~~~~m=0,1.\label{res_ap3}
\end{align}
For $i=N+1,...,N^2$, by  (\ref{alpha_it0}) and  (\ref{def_dof_jump2}), we have 
\begin{equation*}
\begin{aligned}
|\alpha_{i,T}|&\leq\max_{k=1,...,N-1}|[\![ \nabla (I_{h,T} \mathbf{u}_E^\pm )\boldsymbol{\nu}_k]\!]|\leq |[\![ \nabla_{\Gamma_{h,T}^{ext}} (I_{h,T} \mathbf{u}_E^\pm )]\!]|\leq Ch_T^{-\frac{N-1}{2}}\|[\![ \nabla_{\Gamma_{h,T}^{ext}} (I_{h,T} \mathbf{u}_E^\pm )]\!]\|_{L^2(\Gamma_{R_T})}.
\end{aligned}
\end{equation*}
Using the triangle inequality,  the fact $[\![\nabla_{\Gamma}\mathbf{u}_E^\pm]\!]|_{\Gamma}=\mathbf{0}$ and the estimate  (\ref{ineq_sur_sca}), we deduce
\begin{equation*}
\begin{aligned}
\|[\![ \nabla_{\Gamma_{h,T}^{ext}} (I_{h,T} \mathbf{u}_E^\pm )]\!]\|_{L^2(\Gamma_{R_T})}
&\leq \|[\![ \nabla_{\Gamma_{h,T}^{ext}} (I_{h,T} \mathbf{u}_E^\pm-\mathbf{u}_E^\pm )]\!]\|_{L^2(\Gamma_{R_T})}+\|[\![(\nabla_{\Gamma_{h,T}^{ext}}-\nabla_{\Gamma})\mathbf{u}_E^\pm]\!]\|_{L^2(\Gamma_{R_T})}\\
&\leq \|[\![ \nabla (I_{h,T} \mathbf{u}_E^\pm-\mathbf{u}_E^\pm )]\!]\|_{L^2(\Gamma_{R_T})} +C\sum_{s=\pm}\sum_{l=1,2}h_T^{l-1/2}|\mathbf{u}_E^s|_{H^l(R_T)}.
\end{aligned}
\end{equation*}
It follows from  (\ref{tac_ine}) and (\ref{res_ap3}) that 
\begin{equation*}
\begin{aligned}
\|[\![ \nabla (I_{h,T} \mathbf{u}_E^\pm-\mathbf{u}_E^\pm )]\!]\|_{L^2(\Gamma_{R_T})} 
 &\leq C(h_T^{-1/2}\|[\![ \nabla (I_{h,T} \mathbf{u}_E^\pm-\mathbf{u}_E^\pm  ) ]\!]\|_{L^2(R_T)}+h_T^{1/2}|[\![ \nabla \mathbf{u}_E^\pm  ]\!]|_{H^1(R_T)}) \\
&\leq  Ch_T^{1/2}\sum_{s=\pm}|\mathbf{u}_E^s|_{H^2(R_T)}.
\end{aligned}
\end{equation*}
Collecting the above three estimates yields the desired result (\ref{est_alpa2}).

Next, we consider $i=N^2+1,...,N^2+N$. For simplicity of notation, we let  
$
\sigma_{I}^\pm:=\sigma(\mu^\pm, I_{h,T}\mathbf{u}_{E}^{\pm}, \widetilde{I}_{h,T}p_{E}^{\pm})$,
$\sigma^\pm:=\sigma(\mu^\pm,\mathbf{u}_{E}^{\pm}, p_{E}^{\pm})$
and 
$\mathbf{n}_{h,T}:=\mathbf{n}_h(\mathbf{x}_T^*)$. 
By  (\ref{alpha_it0}) and (\ref{def_dof_jump3}),  we get
\begin{equation}\label{pro_al_sig0}
\begin{aligned}
|\alpha_{i,T}|&\leq|[\![\sigma_{I}^\pm\mathbf{n}_h]\!](\mathbf{x}_T^*)-{\rm avg}_{\Gamma_{R_T}}([\![\sigma^\pm\mathbf{n}]\!])|\\
&\leq |[\![\sigma_{I}^\pm\mathbf{n}_h]\!](\mathbf{x}_T^*)-{\rm avg}_{\Gamma_{R_T}}([\![\sigma_{I}^\pm\mathbf{n}_{h,T}]\!])|+|{\rm avg}_{\Gamma_{R_T}}([\![\sigma_{I}^\pm\mathbf{n}_{h,T}-\sigma^\pm\mathbf{n}]\!])|.
\end{aligned}
\end{equation}
Since $I_{h,T}\mathbf{u}_{E}^{\pm}$ and  $\widetilde{I}_{h,T}p_{E}^{\pm}$ are linear, the first term on the right-hand side can be estimated as
\begin{equation}\label{pro_al_sig1}
\begin{aligned}
&|[\![\sigma_{I}^\pm\mathbf{n}_h]\!](\mathbf{x}_T^*)-{\rm avg}_{\Gamma_{R_T}}([\![\sigma_{I}^\pm\mathbf{n}_{h,T}]\!])|= |{\rm avg}_{\Gamma_{R_T}}([\![\nabla (\widetilde{I}_{h,T}p_{E}^{\pm})\cdot (\mathbf{x}-\mathbf{x}_T^*) ]\!])|\\
&\qquad\leq Ch_T|[\![\nabla \widetilde{I}_{h,T}p_{E}^{\pm}]\!]|\leq Ch_T^{1-N/2}\sum_{s=\pm}|\widetilde{I}_{h,T}p_{E}^s|_{H^1(T)}\leq Ch_T^{1-N/2}\sum_{s=\pm}|p_{E}^s|_{H^1(\omega_T)}.
\end{aligned}
\end{equation}
For the second term on the right-hand side of (\ref{pro_al_sig0}), we use (\ref{avr_tac_ine}) to obtain 
\begin{equation*}
\begin{aligned}
&|{\rm avg}_{\Gamma_{R_T}}([\![\sigma_{I}^\pm\mathbf{n}_{h,T}-\sigma^\pm\mathbf{n}]\!])|\leq Ch_T^{-N/2}\sum_{l=0,1}h_T^{l}|[\![\sigma_{I}^\pm\mathbf{n}_{h,T}-\sigma^\pm\mathbf{n}]\!]|_{H^l(R_T)}\\
&\qquad\leq Ch_T^{-N/2}\sum_{l=0,1}h_T^l|[\![(\sigma_{I}^\pm-\sigma^\pm)\mathbf{n}_{h,T}+\sigma^\pm(\mathbf{n}_{h,T}-\mathbf{n})]\!]|_{H^l(R_T)}\\
&\qquad\leq Ch_T^{-N/2}\sum_{s=\pm}\sum_{l=0,1}h_T^l\left(|\sigma_{I}^s-\sigma^s|_{H^l(R_T)}+\|\mathbf{n}_h(\mathbf{x}_T^*)-\mathbf{n}\|_{L^\infty(R_T)}|\sigma^s|_{H^l(R_T)}\right).
\end{aligned}
\end{equation*}
By (\ref{ineq_nh_BT}), (\ref{res_ap2}) and (\ref{res_ap3}), we have
\begin{equation}\label{pro_al_sig2}
|{\rm avg}_{\Gamma_{R_T}}([\![\sigma_{I}^\pm\mathbf{n}_{h,T}-\sigma^\pm\mathbf{n}]\!])|\leq Ch_T^{1-N/2}\sum_{s=\pm}\sum_{m=1,2}(|\mathbf{u}_E^s|_{H^m(R_T)}+|p_E^s|_{H^{m-1}(\omega_T\cup R_T)}).
\end{equation}
Substituting (\ref{pro_al_sig1}) and (\ref{pro_al_sig2}) into (\ref{pro_al_sig0}) yields the desired result (\ref{est_alpa3}). The proof of (\ref{est_alpa4}) and (\ref{est_alpa5}) is analogous. The details are omitted for brevity.
\end{proof}

\subsection{Interpolation error estimates}
With the help of  (\ref{decomp_int_er}) and  Lemmas~\ref{lem_auxest} and \ref{lem_alphas}, we can prove the following interpolation error estimates on interface elements.
\begin{lemma}\label{lem_int_mid}
Suppose $(\mathbf{u},p)\in \widetilde{\boldsymbol{H}^2H^1}(\mathbf{g})$, then there holds for all $T\in\mathcal{T}_h^\Gamma$ that
\begin{equation}\label{int_mid_es}
\begin{aligned}
&|\mathbf{u}_{E}^\pm-(\Pi_{\mathbf{u}}^{\Gamma,J}\mathbf{u})^\pm |^2_{H^m(T)}\leq Ch_T^{4-2m}\mathcal{L}_T(\mathbf{u},p),~~m=0,1,\\
&\|p_{E}^\pm- (\Pi_{\mathbf{u},p}^{\Gamma,J}p)^\pm\|^2_{L^2(T)}\leq Ch_T^{2}\mathcal{L}_T(\mathbf{u},p),
\end{aligned}
\end{equation}
where
$
\mathcal{L}_T(\mathbf{u},p):=\sum_{s=\pm}(\|\mathbf{u}_E^s\|^2_{H^2(R_T)}+\|p_E^s\|^2_{H^1(\omega_T\cup R_T)})$.
\end{lemma}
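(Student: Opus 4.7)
The plan is to use the splitting in \eqref{int_pro_cha} together with the decomposition \eqref{decomp_int_er}, and then combine the bounds from Lemmas~\ref{lem_auxest} and \ref{lem_alphas} term by term.

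First I would estimate Term~$({\rm I})_1$ in \eqref{int_pro_cha}. This is the standard piecewise-linear interpolation error on the element $T$, so the estimates in \eqref{std_inter_est} (applied separately to the extensions $\mathbf{u}_E^\pm$ and $p_E^\pm$, noting that $I_h$ is used for velocity and $\widetilde{I}_h$ for pressure) yield $|\mathbf{u}_E^\pm - I_{h,T}\mathbf{u}_E^\pm|_{H^m(T)} \leq C h_T^{2-m}|\mathbf{u}_E^\pm|_{H^2(T)}$ and $\|p_E^\pm - \widetilde{I}_{h,T}p_E^\pm\|_{L^2(T)}\leq Ch_T |p_E^\pm|_{H^1(\omega_T)}$, both of which are clearly dominated by the right-hand side of \eqref{int_mid_es}.

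Next I would handle Term~$({\rm I})_2$. By \eqref{term2} and the expansion \eqref{decomp_int_er},
\[
({\rm I})_2^\pm = \sum_{i=1}^{(N+1)^2}\alpha_{i,T}\,(\boldsymbol{\theta}_{i,T}^\pm,\vartheta_{i,T}^\pm).
\]
Since each $\boldsymbol{\theta}_{i,T}^\pm$ and $\vartheta_{i,T}^\pm$ is a polynomial of bounded degree on $T$, I would use the obvious inequality $\|v\|_{H^m(T)}\le C\,h_T^{N/2}\,|v|_{W^m_\infty(T)}$ to convert the $W^m_\infty$ bounds from Lemma~\ref{lem_auxest} into $H^m(T)$-bounds. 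Then I would multiply each $|\alpha_{i,T}|$ (from Lemma~\ref{lem_alphas}) by the corresponding bound on $\boldsymbol{\theta}_{i,T}^\pm$ (for velocity) or $\vartheta_{i,T}^\pm$ (for pressure), and check that the powers of $h_T$ telescope correctly in each of the five index groups. For example, for velocity in $H^m$ with $i\in\{1,\dots,N\}$ the product is $h_T^{(4-N)/2}\cdot h_T^{N/2}\cdot h_T^{-m}=h_T^{2-m}$, for $i\in\{N+1,\dots,N^2+N+1\}$ it is $h_T^{(2-N)/2}\cdot h_T^{N/2}\cdot h_T^{1-m}=h_T^{2-m}$, and for $i>N^2+N+1$ the velocity component of $\boldsymbol{\theta}_{i,T}$ vanishes, so nothing contributes. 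A completely analogous bookkeeping gives the $h_T$ factor for $\|p_E^\pm-(\Pi^{\Gamma,J}_{\mathbf{u},p}p)^\pm\|_{L^2(T)}$: $h_T^{(4-N)/2}\cdot h_T^{N/2}\cdot h_T^{-1}$, $h_T^{(2-N)/2}\cdot h_T^{N/2}\cdot 1$, and $h_T^{-N/2}\cdot h_T^{N/2}\cdot h_T$, all equal to $h_T$. Summing the $(N+1)^2$ terms (a finite sum whose cardinality is independent of $h$) and squaring gives exactly the factors $h_T^{4-2m}$ and $h_T^{2}$ in front of $\mathcal{L}_T(\mathbf{u},p)$.

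The only mildly delicate step is matching the Sobolev seminorms appearing in Lemma~\ref{lem_alphas} with the composite quantity $\mathcal{L}_T(\mathbf{u},p)$. Each bound on $\alpha_{i,T}^2$ already lives on $R_T$ or $\omega_T\cup R_T$ and involves only $\|\mathbf{u}_E^s\|_{H^2}^2+\|p_E^s\|_{H^1}^2$ terms after using $|v|_{H^{m-1}}\le \|v\|_{H^{m-1}}$, so these seminorms are absorbed into $\mathcal{L}_T(\mathbf{u},p)$ without loss. The main bookkeeping obstacle is simply making sure the $W^m_\infty$ estimates of Lemma~\ref{lem_auxest} and the $\alpha$-estimates of Lemma~\ref{lem_alphas} pair up index-by-index to give a uniform power of $h_T$; once this is verified in each of the five groups, the lemma follows by adding $({\rm I})_1$ and $({\rm I})_2$ and squaring.
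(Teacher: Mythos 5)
Your proposal is correct and follows essentially the same route as the paper: both use the splitting \eqref{int_pro_cha} into the standard interpolation error $({\rm I})_1$ (handled by \eqref{std_inter_est}) and the IFE correction $({\rm I})_2$ (expanded via \eqref{decomp_int_er} and bounded by pairing the $\alpha_{i,T}$ estimates of Lemma~\ref{lem_alphas} with the $W^m_\infty$ bounds of Lemma~\ref{lem_auxest} converted to $H^m(T)$ through the factor $h_T^{N/2}$). Your index-by-index power count matches the paper's exactly, so no further changes are needed.
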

\begin{proof}
We use (\ref{decomp_int_er}) and Lemmas~\ref{lem_auxest} and \ref{lem_alphas} to get
\begin{equation*}
|I_{h} \mathbf{u}_E^\pm -(\Pi_{\mathbf{u}}^{\Gamma,J}\mathbf{u})^\pm|^2_{H^m(T)}\leq \sum_{i=1}^N C\alpha_{i,T}^2 h_T^{N-2m}+\sum_{i=N+1}^{N^2+N+1} C\alpha_{i,T}^2 h_T^{N+2-2m}\leq Ch_T^{4-2m}\mathcal{L}_T(\mathbf{u},p),
\end{equation*}
\begin{equation*}
\begin{aligned}
\|\widetilde{I}_{h} p_E^\pm -(\Pi_{\mathbf{u},p}^{\Gamma,J}p)^\pm\|^2_{L^2(T)}&\leq \sum_{i=1}^N C\alpha_{i,T}^2 h_T^{N-2}+\sum_{i=N+1}^{N^2+N+1} C\alpha_{i,T}^2 h_T^{N}+\sum_{i=N^2+N+2}^{(N+1)^2} C\alpha_{i,T}^2 h_T^{N+2}\\
&\leq Ch_T^{2}\mathcal{L}_T(\mathbf{u},p).
\end{aligned}
\end{equation*}
The result (\ref{int_mid_es}) follows from (\ref{int_pro_cha}), the triangle inequality, the above inequalities  and  (\ref{std_inter_est}).
\end{proof}

Taking into account the mismatch of $\Gamma$ and $\Gamma_h$, we prove in the following theorem that the IFE space has optimal approximation capabilities.
\begin{theorem}\label{lem_interIFE_up2}
Suppose $(\mathbf{u},p)\in \widetilde{\boldsymbol{H}^2H^1}(\mathbf{g})$, then there exists a positive constant $C$  such that
\begin{equation}\label{int_est_ulti}
\begin{aligned}
\sum_{T\in\mathcal{T}_h}|\mathbf{u}-\Pi_{\mathbf{u}}^{\Gamma,J}\mathbf{u} |^2_{H^m(T)}&\leq Ch^{4-2m}(\|\mathbf{u}\|^2_{H^2(\cup\Omega^\pm)}+\|p\|^2_{H^1(\cup\Omega^\pm)}),~~m=0,1,\\
\|p- \Pi_{\mathbf{u},p}^{\Gamma,J} p\|^2_{L^2(\Omega)}&\leq Ch^{2}(\|\mathbf{u}\|^2_{H^2(\cup\Omega^\pm)}+\|p\|^2_{H^1(\cup\Omega^\pm)}).
\end{aligned}
\end{equation}
\end{theorem}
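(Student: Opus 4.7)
The plan is to aggregate local bounds, splitting the sum $\sum_{T\in\mathcal{T}_h}$ into contributions from $\mathcal{T}_h^{non}$ and $\mathcal{T}_h^\Gamma$. On a non-interface element $T\subset\Omega^s$ the IFE interpolant coincides with the standard mini-element interpolant applied to $(\mathbf{u}_E^s,p_E^s)$, and $(\mathbf{u}_E^s,p_E^s)=(\mathbf{u},p)$ on $T$, so the standard estimate (\ref{std_inter_est}) together with (\ref{extension}) provides $|\mathbf{u}-\Pi^{\Gamma,J}_{\mathbf{u}}\mathbf{u}|^2_{H^m(T)}+\|p-\Pi^{\Gamma,J}_{\mathbf{u},p}p\|^2_{L^2(T)}\leq Ch_T^{4-2m}(|\mathbf{u}_E^s|_{H^2(\omega_T)}^2+|p_E^s|_{H^1(\omega_T)}^2)$.

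On each interface element I would invoke the triangle inequality (\ref{err_pro_it1}) and the splitting (\ref{err_pro_it2}). The second piece $\interleave E_h^{BK}(\mathbf{u},p)-\Pi_h^{\Gamma,J}(\mathbf{u},p)\interleave_{m,T}$ is exactly what Lemma~\ref{lem_int_mid} controls, giving the bound $Ch_T^{4-2m}\mathcal{L}_T(\mathbf{u},p)$ for the velocity part and $Ch_T^{2}\mathcal{L}_T(\mathbf{u},p)$ for the pressure part. When these are summed over $T\in\mathcal{T}_h^\Gamma$, a finite-overlap argument applies: by shape regularity and the fact that $\mathrm{diam}(R_T)\leq Ch_T$, any point of $\Omega$ is covered by only a uniformly bounded number of patches $\omega_T\cup R_T$, so $\sum_T\mathcal{L}_T(\mathbf{u},p)\leq C(\|\mathbf{u}\|^2_{H^2(\cup\Omega^\pm)}+\|p\|^2_{H^1(\cup\Omega^\pm)})$ after using (\ref{extension}).

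The main obstacle is the first piece, $\interleave(\mathbf{u},p)-E_h^{BK}(\mathbf{u},p)\interleave_{m,T}$, which is supported on $T\cap\Omega^\triangle$ and whose integrand is $\pm[\![\mathbf{u}_E^\pm]\!]$ (resp.\ $\pm[\![p_E^\pm]\!]$). By (\ref{ass_Gamma_h}) we have $\Omega^\triangle\subset U(\Gamma,Ch^2)$. The decisive observation is that $[\mathbf{u}]_\Gamma=\mathbf{0}$ (from $(\mathbf{u},p)\in\widetilde{\boldsymbol{H}^2H^1}(\mathbf{g})$) forces $[\![\mathbf{u}_E^\pm]\!]$ to vanish on $\Gamma$, so the first inequality in Lemma~\ref{lem_strip} reduces to $\|[\![\mathbf{u}_E^\pm]\!]\|_{L^2(U(\Gamma,\delta))}\leq C\delta\|\nabla[\![\mathbf{u}_E^\pm]\!]\|_{L^2(U(\Gamma,\delta))}$. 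Choosing $\delta=Ch^2$ and bounding $\|\nabla[\![\mathbf{u}_E^\pm]\!]\|_{L^2}\leq C\|\mathbf{u}\|_{H^2(\cup\Omega^\pm)}$ via (\ref{extension}) yields the sharp $O(h^2)$ control needed for the $L^2$ rate of the velocity; without exploiting this vanishing trace the rate would be suboptimal by one power of $h$. For the $H^1$ semi-norm of the velocity mismatch, $\nabla[\![\mathbf{u}_E^\pm]\!]$ need not vanish on $\Gamma$, so the first inequality of Lemma~\ref{lem_strip} applied once (with the trace term absorbed by standard trace theory) gives the $O(h)$ factor matching $h^{4-2\cdot 1}=h^2$ in squared norm. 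Similarly, for the pressure in $L^2$, no trace vanishing is available, but one application of Lemma~\ref{lem_strip} already produces an $O(h)\|p\|_{H^1(\cup\Omega^\pm)}$ bound, matching the required $h^2$ for the squared $L^2$ norm.

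Collecting the non-interface and interface contributions (and using the extension stability (\ref{extension}) to reabsorb norms of $\mathbf{u}_E^\pm,p_E^\pm$ into norms of $\mathbf{u},p$) delivers (\ref{int_est_ulti}). The only delicate step is the mismatch-region analysis just described; the rest is a bookkeeping exercise combining Lemma~\ref{lem_int_mid}, (\ref{std_inter_est}), and finite overlap.
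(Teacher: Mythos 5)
Your proposal is correct and follows essentially the same route as the paper's own proof: the triangle-inequality split into the $\Gamma$-versus-$\Gamma_h$ mismatch term and the interpolation term, Lemma~\ref{lem_int_mid} plus finite overlap of $\omega_T\cup R_T$ and the extension stability (\ref{extension}) for the latter, and Lemma~\ref{lem_strip} with $\delta=Ch^2$ on $\Omega^\triangle\subset U(\Gamma,Ch^2)$ — exploiting $[\![\mathbf{u}_E^\pm]\!]|_\Gamma=\mathbf{0}$ for the sharp $O(h^2)$ velocity bound — for the former. The only cosmetic difference is that you treat non-interface elements separately at the outset, whereas the paper folds them into the final collection step; the substance is identical.
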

\begin{proof}
In view of  (\ref{err_pro_it1}), (\ref{err_pro_it2}) and Lemma~\ref{lem_int_mid}, it suffices to consider the first term on the right-hand of (\ref{err_pro_it1}). Recalling  $\Omega^\triangle=(\Omega_h^+\backslash \Omega^+)\cup(\Omega_h^-\backslash \Omega^-)$, it is clear that  
\begin{equation*}
|\mathbf{u}-\mathbf{u}^{BK}|_{H^m(\Omega)}=|[\![\mathbf{u}_E^\pm]\!]|_{H^m(\Omega^\triangle)},\quad \|p-p^{BK}\|_{L^2(\Omega)}=\|[\![p^\pm]\!]\|_{L^2(\Omega^\triangle)}, 
\end{equation*}
where we let $(\mathbf{u}^{BK}, p^{BK}):=E_h^{BK}(\mathbf{u}, p)$ for simplicity.
By (\ref{ass_Gamma_h}), we have $\mbox{dist}(\Gamma,\Gamma_h)\leq Ch^2$, which implies $\Omega^\triangle \subset U(\Gamma,Ch^2)$. 
Therefore, using Lemma~\ref{lem_strip} with $\delta=Ch^2$ and the fact $[\![\mathbf{u}_E^\pm]\!]|_\Gamma=\mathbf{0}$ and the extension stability (\ref{extension}) we get  
\begin{equation}\label{error_Ehbk}
|\mathbf{u}-\mathbf{u}^{BK}|_{H^m(\Omega)}\leq Ch^{2-m}\|\mathbf{u}\|_{H^2(\cup\Omega^\pm)}, \quad  \|p-p^{BK}\|_{L^2(\Omega)}\leq Ch\|p\|_{H^1(\cup\Omega^\pm)}.
\end{equation}
The desired result (\ref{int_est_ulti}) follows from (\ref{error_Ehbk}),  (\ref{err_pro_it1}), (\ref{err_pro_it2}), Lemma~\ref{lem_int_mid},  the standard estimates (\ref{std_inter_est}) on non-interface elements, finite overlapping of the sets $R_T$ and $\omega_T$, and the extension stability (\ref{extension}).
\end{proof}

We also have the optimal interpolation error estimates under the $H^1$-regularity. 
\begin{lemma}\label{lem_h1_inter}
For each $T\in\mathcal{T}_h^\Gamma$, there exists a positive constant $C$ such that
\begin{equation}
|\mathbf{v}-\widetilde{\Pi}_{\mathbf{v}}^{\Gamma}\mathbf{v}|_{H^m(T)}\leq Ch_T^{1-m}|\mathbf{v}|_{H^1(\omega_T)},~~m=0,1,~~\forall \mathbf{v}\in H^1(\Omega)^N.
\end{equation}
\end{lemma}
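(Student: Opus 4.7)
The plan is to exploit the explicit formula of Lemma~\ref{lem_IFEbasis}, which reduces the question to estimates on a standard linear polynomial and on the explicit correction $w_T-I_{h,T}w_T$. On each $T\in\mathcal{T}_h^\Gamma$, that lemma gives the representation
\begin{equation*}
\widetilde{\Pi}_{\mathbf{v}}^{\Gamma}\mathbf{v}\big|_T \;=\; \mathbf{v}_L \;+\; \sum_{i=1}^{N-1} c_i\,(w_T-I_{h,T}w_T)\,\mathbf{t}_{i,h},
\end{equation*}
where $\mathbf{v}_L\in\boldsymbol{P}_1(T)$ is the affine polynomial with vertex values $\mathbf{v}_L(\mathbf{a}_{j,T})=(\widetilde{I}_h\mathbf{v}_E^{s_j})(\mathbf{a}_{j,T})$ and the coefficients $c_i$ are determined by (\ref{eq_cj2}) with $I_{h,T}\mathbf{v}$ replaced by $\mathbf{v}_L$. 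I would then split the interpolation error into
\begin{equation*}
\mathbf{v} - \widetilde{\Pi}_{\mathbf{v}}^{\Gamma}\mathbf{v} \;=\; (\mathbf{v}-\mathbf{v}_L) \;-\; \sum_{i=1}^{N-1} c_i\,(w_T-I_{h,T}w_T)\,\mathbf{t}_{i,h},
\end{equation*}
and handle the two pieces separately.

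For the first piece, since $\mathbf{v}\in H^1(\Omega)^N$ is globally continuous, $\mathbf{v}_L$ inherits the standard Scott--Zhang approximation properties. Applying (\ref{std_inter_est})-type bounds to each $\widetilde{I}_h\mathbf{v}_E^s$ and using the extension stability (\ref{extension}), one expects
\begin{equation*}
|\mathbf{v}-\mathbf{v}_L|_{H^m(T)} \leq C h_T^{1-m}\,|\mathbf{v}|_{H^1(\omega_T)},\qquad m=0,1.
\end{equation*}
The mild technical point is that $\mathbf{v}_L$ mixes nodal values taken from $\mathbf{v}_E^+$ at vertices of $T$ lying in $\overline{\Omega^+}$ and from $\mathbf{v}_E^-$ at vertices in $\overline{\Omega^-}$; since $\mathbf{v}$ has no jump, each $(\widetilde{I}_h\mathbf{v}_E^{s_j})(\mathbf{a}_{j,T})$ agrees, up to a localizable error, with the plain Scott--Zhang interpolant of $\mathbf{v}$ at $\mathbf{a}_{j,T}$, so the mixing does not deteriorate the local bound.

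For the correction piece, combining (\ref{eq_cj2}) with (\ref{fengmu_jie}) gives $|c_i|\leq C|\nabla\mathbf{v}_L|$ with a constant depending only on $\mu^{\pm}$; since $\mathbf{v}_L$ is affine, the inverse inequality together with $H^1$-stability of Scott--Zhang yields $|c_i|\leq C h_T^{-N/2}|\mathbf{v}|_{H^1(\omega_T)}$. On the other hand, $w_T$ is Lipschitz with $|w_T|_{W^{1,\infty}(T)}\leq 1$ and piecewise affine on $T_h^{\pm}$, so standard Lagrange interpolation bounds give $\|w_T-I_{h,T}w_T\|_{L^{\infty}(T)}\leq Ch_T$ and $|w_T-I_{h,T}w_T|_{W^{1,\infty}(T)}\leq C$, which after scaling produce $|w_T-I_{h,T}w_T|_{H^m(T)}\leq C h_T^{N/2+1-m}$. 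Multiplying the two bounds yields exactly $C h_T^{1-m}|\mathbf{v}|_{H^1(\omega_T)}$ for the correction piece.

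The main obstacle is the mixed definition of $\mathbf{v}_L$: because the right-hand side of the target estimate involves only $|\mathbf{v}|_{H^1(\omega_T)}$ and not the (a priori larger) extension norm $\sum_{s=\pm}|\mathbf{v}_E^s|_{H^1(\omega_T)}$, one has to argue that the two global extensions $\mathbf{v}_E^{\pm}$ contribute locally the same information as $\mathbf{v}$ itself. I would resolve this by using the continuity of $\mathbf{v}$ across $\Gamma$ and the freedom in choosing the Scott--Zhang averaging patches so that, for each vertex $\mathbf{a}_{j,T}$, the averaging set can be taken on the same side of $\Gamma$ as $\mathbf{a}_{j,T}$ (possible since $h<\delta_0$), which forces $(\widetilde{I}_h\mathbf{v}_E^{s_j})(\mathbf{a}_{j,T})$ to coincide with the ordinary Scott--Zhang value of $\mathbf{v}$ at that vertex. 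Once this is done, the decomposition plus the two bounds above deliver the lemma.
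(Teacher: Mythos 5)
Your proposal is correct and follows essentially the same route as the paper: it identifies $\widetilde{\Pi}_{\mathbf{v}}^{\Gamma}\mathbf{v}$ via the explicit formula of Lemma~\ref{lem_IFEbasis} as the Scott--Zhang affine interpolant plus the correction $\sum_i c_i(w_T-I_{h,T}w_T)\mathbf{t}_{i,h}$, bounds $|c_i|\leq C|\nabla \mathbf{v}_L|$ using (\ref{eq_cj2}) and (\ref{fengmu_jie}), uses $|w_T-I_{h,T}w_T|_{W_\infty^m(T)}\leq Ch_T^{1-m}$, and concludes with Scott--Zhang stability and the triangle inequality. Your explicit discussion of why the side-dependent nodal values $(\widetilde{I}_h\mathbf{v}_E^{s_j})(\mathbf{a}_{j,T})$ reduce to the ordinary Scott--Zhang values of the continuous $\mathbf{v}$ is a point the paper passes over silently (it simply writes $I_h(\widetilde{\Pi}_{\mathbf{v}}^{\Gamma}\mathbf{v})=\widetilde{I}_h\mathbf{v}$), so your treatment is, if anything, slightly more careful.
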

\begin{proof}
By (\ref{def_tilde_ih}), we have $I_{h}(\widetilde{\Pi}_{\mathbf{v}}^{\Gamma}\mathbf{v})=\widetilde{I}_{h}\mathbf{v}$. Then, using Lemma~\ref{lem_IFEbasis}, we have 
\begin{equation*}
(\widetilde{\Pi}_{\mathbf{v}}^{\Gamma}\mathbf{v})|_T=\widetilde{I}_{h,T}\mathbf{v}+\sum_{i=1}^{N-1}c_i(w_T-I_{h,T}w_T)\mathbf{t}_{i,h}~~\mbox{with}~~c_i=\frac{\mathbf{t}_{i,h}^T\sigma(\mu^-/\mu^+-1,\widetilde{I}_{h,T}\mathbf{v},0)\mathbf{n}_h}{1+(\mu^-/\mu^+-1)\nabla I_{h,T}w_T\cdot \mathbf{n}_h}.
\end{equation*}
From (\ref{fengmu_jie}) and (\ref{def_zw}), it is easy to see 
$
|c_i|\leq C|\nabla \widetilde{I}_{h,T}\mathbf{v}|$ and  $|w_T-I_{h,T}w_T|_{W_\infty^m(T)}\leq Ch^{1-m}_T.
$
Then we have 
\begin{equation*}
|\widetilde{\Pi}_{\mathbf{v}}^{\Gamma}\mathbf{v}-\widetilde{I}_{h,T}\mathbf{v}|_{H^m(T)}\leq Ch^{1-m}_T|\widetilde{I}_{h,T}\mathbf{v}|_{H^1(T)}\leq Ch^{1-m}_T|\mathbf{v}|_{H^1(\omega_T)},
\end{equation*}
where we used (\ref{std_inter_est}) in the last inequality.
The lemma now follows from  the triangle inequality and (\ref{std_inter_est}).
\end{proof}

\section{Analysis of the IFE method}\label{sec_analysis_IFEM}
For all $(\mathbf{v}, q) \in (\boldsymbol{V}, Q)+\widetilde{\boldsymbol{V}Q_{h,0}}(\Omega)$,  we define some norms by
\begin{equation*}
\begin{aligned}
&\|\mathbf{v} \|^2_{1,h}:=\sum_{T\in\mathcal{T}_h} | \mathbf{v}|^2_{H^1(T)}, \quad \|q\|^2_{*,pre}:=\|q\|_{L^2(\Omega)}^2+\sum_{F\in\mathcal{F}_h^\Gamma}h_F\|\{q\}_F\|^2_{L^2(F)},\\
&\| \mathbf{v} \|^2_{*,h}:=\sum_{T\in\mathcal{T}_h} \|\sqrt{2\mu_h} \boldsymbol{\epsilon}(\mathbf{v})\|^2_{L^2(T)}+\sum_{F\in\mathcal{F}_h^\Gamma}(h_F\|\{ 2\mu_h\boldsymbol{\epsilon}(\mathbf{v})\mathbf{n}_F\}_F \|_{L^2(F)}^2+\frac{2+\eta}{h_F}\|[\mathbf{v}]_F \|_{L^2(F)}^2),\\
&\interleave(\mathbf{v}, q)\interleave^2:= \|\mathbf{v} \|^2_{1,h}+\|q\|_{L^2(\Omega)}^2,~~\interleave(\mathbf{v}, q)\interleave^2_*:= \| \mathbf{v} \|^2_{*,h}+\|q\|^2_{*,pre}.
\end{aligned}
\end{equation*}
Using the Korn inequality for piecewise $H^1$ vector functions (see \cite{brenner2004korn}), we immediately  have
\begin{equation}\label{korn_ineq}
\|\mathbf{v}\|_{1,h}\leq C\|\mathbf{v}\|_{*,h}\qquad \forall (\mathbf{v}, q)  \in (\boldsymbol{V}, Q)+\widetilde{\boldsymbol{V}Q_{h,0}}(\Omega).
\end{equation}

\subsection{Norm-equivalence for IFE functions}
We first collect some useful inequalities for the coupled IFE functions. For all $(\mathbf{v}_h, q_h) \in \widetilde{\boldsymbol{V}Q_{h}}(\Omega)$, we have the decomposition $\mathbf{v}_h=\mathbf{v}_L+\mathbf{v}_b$ with $(\mathbf{v}_L,q_h)|_T\in\widetilde{\boldsymbol{P}_1P_1}(T)$ and $\mathbf{v}_b|_T\in \mbox{span}\{b_T\}^N$. Define $I_h^B\mathbf{v}_h=I_h \mathbf{v}_L+\mathbf{v}_b$ and let $\mathcal{T}_h^F$ be the set of all elements in $\mathcal{T}_h$ sharing the face $F$. We have the following lemma whose proof is given in Appendix~\ref{sec_app_b}.
\begin{lemma}\label{lem_trac_IFE}
For any $T\in\mathcal{T}_h^\Gamma$ and $F\in\mathcal{F}_h^\Gamma$,  the following inequalities hold for all $(\mathbf{v}_h, q_h) \in \widetilde{\boldsymbol{V}Q_{h}}(\Omega)$:
\begin{align}
&\|\nabla \mathbf{v}_h\|_{L^2(\partial T)}\leq Ch_T^{-1/2}|\mathbf{v}_h|_{H^1(T)},\label{trac_IFE_inequality1}\\
&|\mathbf{v}_h|_{H^1(T)}\leq Ch_T^{-1}|\mathbf{v}_h|_{L^2(T)},\label{inver_IFE}\\
&|I_{h,T}\mathbf{v}_h|_{H^1(T)}\leq C|\mathbf{v}_h|_{H^1(T)},\label{IFE_inequ_v}\\
&h_T^{1/2}\|\mathbf{v}_h-I_{h}^B\mathbf{v}_h\|_{L^2(\Gamma_{h,T})}+h_T^m|\mathbf{v}_h-I_{h}^B\mathbf{v}_h|_{H^m(T)}\leq Ch_T|\mathbf{v}_h|_{H^1(T)}, ~ m=0,1,\label{IFE_FE_er}\\
&\|[\![q_h^\pm]\!]\|_{L^2(\Gamma_{h,T})}\leq Ch_T^{-1/2}|\mathbf{v}_h|_{H^1(T)},\label{trac_IFE_inequality_qj}\\
&\|q_h\|_{L^2(\partial T)}\leq Ch_T^{-1/2}(\|q_h\|_{L^2(T)}+|\mathbf{v}_h|_{H^1(T)}),\label{trac_IFE_inequality2}\\
&h_F\|[q_h]_F\|^2_{L^2(F)}+h_F^{-1}\| [\mathbf{v}_h]_{F} \|^2_{L^2(F)}\leq C\sum_{T\in\mathcal{T}_h^F}| \mathbf{v}_h|^2_{H^1(T)},\label{ineq_jup_v1}
\end{align} 
where the constants $C$ are independent of the mesh size $h$ and the interface position relative to the mesh, but may depend on the viscosity coefficients $\mu^\pm$.
\end{lemma}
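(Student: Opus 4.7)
My plan is to leverage the explicit decomposition from Lemma~\ref{lem_IFEbasis}. On each interface element $T$, every $(\mathbf{v}_h,q_h)\in\widetilde{\boldsymbol{V}Q_h}(\Omega)$ is of the form
\begin{equation*}
\mathbf{v}_h=I_{h,T}\mathbf{v}_h+\sum_{i=1}^{N-1}c_i(w_T-I_{h,T}w_T)\mathbf{t}_{i,h}+\mathbf{v}_b, \qquad q_h=I_{h,T}q_h+c_N(z_T-I_{h,T}z_T),
\end{equation*}
where the coefficients $c_i$ are linear functionals of the nodal data of $\mathbf{v}_h$ satisfying $|c_i|\leq C|\nabla I_{h,T}\mathbf{v}_h|$ uniformly, as a direct consequence of the Cartesian-mesh lower bound (\ref{fengmu_jie}). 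From the definitions (\ref{def_zw}), elementary computations furnish the pointwise estimates $\|w_T-I_{h,T}w_T\|_{L^\infty(T)}\leq Ch_T$, $\|\nabla(w_T-I_{h,T}w_T)\|_{L^\infty(T_h^\pm)}\leq C$, $\|z_T-I_{h,T}z_T\|_{L^\infty(T)}\leq C$, and $\|\nabla(z_T-I_{h,T}z_T)\|_{L^\infty(T_h^\pm)}\leq Ch_T^{-1}$, which will do most of the work.

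The heart of the proof is the nodal-interpolation stability (\ref{IFE_inequ_v}). My plan is to derive it by writing $\nabla\mathbf{v}_L$ piecewise on $T_h^\pm$ using the decomposition and recognising $\nabla I_{h,T}\mathbf{v}_h$ as (up to a controlled residual) an area-weighted average of $\nabla\mathbf{v}_L|_{T_h^+}$ and $\nabla\mathbf{v}_L|_{T_h^-}$; the residual is absorbed thanks to the Cartesian-mesh bound $0\leq\nabla I_{h,T}w_T\cdot\mathbf{n}_h\leq 1$. Jensen's inequality then yields $|I_{h,T}\mathbf{v}_h|_{H^1(T)}\leq C|\mathbf{v}_L|_{H^1(T)}$. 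To add back the bubble $\mathbf{v}_b$, one expands $|\mathbf{v}_h|_{H^1(T)}^2=|\mathbf{v}_L|_{H^1(T)}^2+2\int_T\nabla\mathbf{v}_L:\nabla\mathbf{v}_b+|\mathbf{v}_b|_{H^1(T)}^2$ and handles the cross term via $\int_T\nabla\mathbf{v}_L:\nabla\mathbf{v}_b=-(\nabla\mathbf{v}_L|_{T_h^+}-\nabla\mathbf{v}_L|_{T_h^-}):\int_{\Gamma_{h,T}}\mathbf{v}_b\otimes\mathbf{n}_h$ (divergence theorem on each $T_h^\pm$ together with $\mathbf{v}_b=\mathbf{0}$ on $\partial T$), then absorbs this scaled trace term using the explicit jump $\nabla\mathbf{v}_L|_{T_h^+}-\nabla\mathbf{v}_L|_{T_h^-}=\sum c_i\mathbf{t}_{i,h}\mathbf{n}_h^T$ and finite-dimensional norm-equivalence on the $(\mathbf{v}_L,\mathbf{v}_b)$-splitting. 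Making every constant uniform in how $\Gamma_h$ cuts $T$, on both 2D and 3D Cartesian meshes, is the main technical obstacle and is precisely where the Cartesian geometry and the explicit formulas earn their keep.

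Once (\ref{IFE_inequ_v}) is available, the remaining inequalities follow by routine manipulation of the decomposition. For (\ref{IFE_FE_er}), the identity $\mathbf{v}_h-I_h^B\mathbf{v}_h=\sum c_i(w_T-I_{h,T}w_T)\mathbf{t}_{i,h}$, combined with $|c_i|\leq C|\nabla I_{h,T}\mathbf{v}_h|\leq Ch_T^{-N/2}|\mathbf{v}_h|_{H^1(T)}$ (via (\ref{IFE_inequ_v}) and the scaling $|\nabla\ell|\sim h_T^{-N/2}|\ell|_{H^1(T)}$ for linear $\ell$) and the pointwise correction bounds, delivers the $L^2$, $H^1$, and trace estimates directly. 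For (\ref{trac_IFE_inequality_qj}), note that $[\![q_h^\pm]\!]|_{\Gamma_{h,T}}=-c_N$ is constant, so $\|[\![q_h^\pm]\!]\|_{L^2(\Gamma_{h,T})}^2\leq|c_N|^2|\Gamma_{h,T}|\leq Ch_T^{N-1}|\nabla I_{h,T}\mathbf{v}_h|^2\leq Ch_T^{-1}|\mathbf{v}_h|_{H^1(T)}^2$. The remaining trace/inverse estimates (\ref{trac_IFE_inequality1}), (\ref{inver_IFE}), and (\ref{trac_IFE_inequality2}) follow by applying standard polynomial trace/inverse inequalities on $T$ itself (not on the possibly degenerate $T_h^\pm$) to each summand of the decomposition, using the $L^\infty$/$W^{1,\infty}$ bounds on the correction factors, together with (\ref{trac_IFE_inequality_qj}) for the pressure jump across $\Gamma_{h,T}$. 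Finally, (\ref{ineq_jup_v1}) on an interface face $F$ is obtained by splitting $[\mathbf{v}_h]_F$ and $[q_h]_F$ into nodal-value jumps (which vanish by node-continuity of IFE functions) and correction-piece jumps, and applying (\ref{IFE_FE_er}) and (\ref{trac_IFE_inequality_qj}) on each $T\in\mathcal{T}_h^F$.
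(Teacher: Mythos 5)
Your overall architecture (the explicit decomposition from Lemma~\ref{lem_IFEbasis}, the uniform bound $|c_i|\leq C|\nabla I_{h,T}\mathbf{v}_h|$ coming from (\ref{fengmu_jie}), the constant pressure jump $[\![q_h^\pm]\!]=-c_N$, and the reduction of (\ref{IFE_FE_er}), (\ref{trac_IFE_inequality_qj}) and (\ref{ineq_jup_v1}) to these ingredients plus the node-continuity of $I_h^B\mathbf{v}_h$ and $I_hq_h$) matches the paper's Appendix~B closely, and those parts are sound. The genuine gap is in the step you yourself flag as the heart: separating the linear part from the bubble. Your cross term $\int_T\nabla\mathbf{v}_L:\nabla\mathbf{v}_b=-\sum_i c_i\int_{\Gamma_{h,T}}\mathbf{t}_{i,h}\cdot\mathbf{v}_b$ scales, after inserting $|c_i|\leq Ch_T^{-N/2}|I_{h,T}\mathbf{v}_h|_{H^1(T)}$ and a trace bound for the bubble, as $C\,|\mathbf{v}_L|_{H^1(T)}|\mathbf{v}_b|_{H^1(T)}$ with a constant that depends on $\mu^\pm$ but is in no way small; the expansion $|\mathbf{v}_L|^2-2C|\mathbf{v}_L||\mathbf{v}_b|+|\mathbf{v}_b|^2$ is only coercive when $C<1$, so the absorption does not close. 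Appealing to ``finite-dimensional norm-equivalence on the $(\mathbf{v}_L,\mathbf{v}_b)$-splitting'' is circular here: the uniformity of that equivalence with respect to the position of $\Gamma_{h,T}$ is precisely the statement to be proved, since the space itself changes with the cut.

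The paper's proof sidesteps this by never pairing the \emph{broken} function $\mathbf{v}_L$ with $\mathbf{v}_b$. It works with a one-sided polynomial extension $\mathbf{v}_L^{s_0}$ (a single linear function on all of $T$), for which $\int_T\nabla\mathbf{v}_L^{s_0}\cdot\nabla\mathbf{v}_b=0$ holds exactly (harmonicity of $\mathbf{v}_L^{s_0}$ plus $\mathbf{v}_b|_{\partial T}=\mathbf{0}$), hence $\|\nabla\mathbf{v}_L^{s_0}\|_{L^2(T)}\leq\|\nabla(\mathbf{v}_L^{s_0}+\mathbf{v}_b)\|_{L^2(T)}$ with constant one and no absorption. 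To convert the right-hand side back into $\|\nabla\mathbf{v}_h\|$ it picks a ball $B\subset T_h^{s_0}$ with $|B|\geq Ch_T^N$ (on which $\mathbf{v}_L^{s_0}+\mathbf{v}_b=\mathbf{v}_h$) and uses norm-equivalence of polynomials under uniformly bounded homotheties ($B\subset T\subset B'$ with $B'=\lambda B$, $2\leq\lambda\leq C$); this is where the uniformity in the interface position is actually earned, together with the comparability $|\nabla\mathbf{v}_L^+|\leq C|\nabla\mathbf{v}_L^-|$ and vice versa. The outcome is the key estimate $|\mathbf{v}_h^\pm|_{H^m(T)}\leq C|\mathbf{v}_h|_{H^m(T)}$, $m=0,1$, from which (\ref{trac_IFE_inequality1}), (\ref{inver_IFE}), (\ref{IFE_inequ_v}) and (\ref{trac_IFE_inequality2}) follow by standard trace/inverse inequalities applied to the polynomial extensions on all of $T$. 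Note also that your plan for (\ref{inver_IFE}) and for controlling the bubble summand requires this $L^2$-level ($m=0$) stability, not only the $H^1$-seminorm stability of $I_{h,T}$, so the gap propagates beyond the single step you isolate.
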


Using (\ref{korn_ineq}), (\ref{trac_IFE_inequality1}), (\ref{trac_IFE_inequality2}) and (\ref{ineq_jup_v1}) we obtain the following norm-equivalence property.
\begin{lemma}
There exist constants $C_0$ and $C_1$ such that for all $(\mathbf{v}_h,q_h)\in\widetilde{\boldsymbol{V}Q_{h,0}}(\Omega)$,
\begin{equation}\label{norm_eq1}
C_1\|\mathbf{v}_h\|_{1,h}\leq \| \mathbf{v}_h \|_{*,h}\leq C_0\|\mathbf{v}_h\|_{1,h}~\mbox{ and }~C_1\interleave(\mathbf{v}_h,q_h)\interleave \leq \interleave(\mathbf{v}_h,q_h)\interleave_*\leq C_0\interleave(\mathbf{v}_h,q_h)\interleave.
\end{equation}
\end{lemma}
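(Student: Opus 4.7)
The plan is to derive both norm equivalences by combining the Korn inequality (\ref{korn_ineq}) with the trace, inverse and jump estimates collected in Lemma~\ref{lem_trac_IFE}. All four ingredients are already at hand, so the proof is essentially a bookkeeping argument: bound each individual piece of $\interleave\cdot\interleave_*$ by pieces of $\interleave\cdot\interleave$ and vice versa.

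For the velocity equivalence $C_1\|\mathbf{v}_h\|_{1,h}\leq\|\mathbf{v}_h\|_{*,h}\leq C_0\|\mathbf{v}_h\|_{1,h}$, the lower bound is immediate from (\ref{korn_ineq}), since the face contributions in $\|\mathbf{v}_h\|_{*,h}^2$ are nonnegative. For the upper bound I would treat the three terms of $\|\mathbf{v}_h\|_{*,h}^2$ separately. The elementwise term $\|\sqrt{2\mu_h}\,\boldsymbol{\epsilon}(\mathbf{v}_h)\|_{L^2(T)}^2$ is bounded by $2\max(\mu^+,\mu^-)\,|\mathbf{v}_h|_{H^1(T)}^2$ by Cauchy--Schwarz applied to the symmetric gradient. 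The face-consistency term $h_F\|\{2\mu_h\boldsymbol{\epsilon}(\mathbf{v}_h)\mathbf{n}_F\}_F\|_{L^2(F)}^2$ is controlled by summing over the two elements sharing $F$ and invoking the IFE trace inequality (\ref{trac_IFE_inequality1}), which yields $\|\nabla\mathbf{v}_h\|_{L^2(\partial T)}^2\le C h_T^{-1}|\mathbf{v}_h|_{H^1(T)}^2$; the $h_F$ prefactor cancels the $h_T^{-1}$ thanks to shape regularity. Finally, the penalty term $\frac{2+\eta}{h_F}\|[\mathbf{v}_h]_F\|_{L^2(F)}^2$ is dominated by $C\sum_{T\in\mathcal{T}_h^F}|\mathbf{v}_h|_{H^1(T)}^2$ by the jump estimate (\ref{ineq_jup_v1}). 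Summing over $F\in\mathcal{F}_h^\Gamma$ and using the finite overlap of interface patches gives the desired bound.

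For the full norm $\interleave(\mathbf{v}_h,q_h)\interleave$, the lower bound $\interleave(\mathbf{v}_h,q_h)\interleave\le C\interleave(\mathbf{v}_h,q_h)\interleave_*$ follows by combining the velocity equivalence just proved with the trivial estimate $\|q_h\|_{L^2(\Omega)}\le\|q_h\|_{*,pre}$. For the upper bound, the velocity piece $\|\mathbf{v}_h\|_{*,h}\le C\|\mathbf{v}_h\|_{1,h}$ is again furnished by the equivalence above, while the pressure piece $\|q_h\|_{*,pre}^2$ requires one additional step: control of $\sum_{F\in\mathcal{F}_h^\Gamma}h_F\|\{q_h\}_F\|_{L^2(F)}^2$. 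For each $F$ shared by elements $T_1,T_2$, I apply the IFE pressure trace inequality (\ref{trac_IFE_inequality2}) to obtain $h_F\|q_h\|_{L^2(F\cap T_i)}^2\le C(\|q_h\|_{L^2(T_i)}^2+|\mathbf{v}_h|_{H^1(T_i)}^2)$, so that summing over $F$ and using finite overlap produces $\sum_F h_F\|\{q_h\}_F\|_{L^2(F)}^2\le C(\|q_h\|_{L^2(\Omega)}^2+\|\mathbf{v}_h\|_{1,h}^2)\le C\interleave(\mathbf{v}_h,q_h)\interleave^2$.

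There is no real conceptual obstacle here, since Lemma~\ref{lem_trac_IFE} already absorbs every difficulty peculiar to the coupled IFE setting; the only point requiring care is the pressure trace bound, where the right-hand side of (\ref{trac_IFE_inequality2}) picks up the velocity seminorm $|\mathbf{v}_h|_{H^1(T)}$. This coupling is exactly why the two norms $\interleave\cdot\interleave$ and $\interleave\cdot\interleave_*$ must be compared as pairs rather than component by component, and it is the reason the proof must fold the velocity seminorm into the bound on $\|q_h\|_{*,pre}$. Once this observation is made, the constants $C_0$ and $C_1$ come out depending only on $\mu^\pm$, $\eta$ and the shape regularity constant, matching the claim of the lemma.
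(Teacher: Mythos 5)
Your proof is correct and follows essentially the same route as the paper, which simply cites the Korn inequality (\ref{korn_ineq}) together with the trace, inverse and jump estimates (\ref{trac_IFE_inequality1}), (\ref{trac_IFE_inequality2}) and (\ref{ineq_jup_v1}) from Lemma~\ref{lem_trac_IFE}; you have merely written out the bookkeeping that the paper leaves implicit. Your observation that the pressure trace bound (\ref{trac_IFE_inequality2}) forces the velocity seminorm into the estimate of $\|q_h\|_{*,pre}$, so the norms must be compared as pairs, is exactly the right point of care.
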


\subsection{Boundedness and coercivity}
It follows from the Cauchy-Schwarz inequality that there exist constants $C_b$ and $C_{A}$ such that for all $(\mathbf{v}, q), (\mathbf{w}, r)  \in (\boldsymbol{V}, Q)+\widetilde{\boldsymbol{V}Q_{h}}(\Omega)$,
\begin{equation}\label{bd}
\begin{aligned}
&a_h(\mathbf{v},\mathbf{w})\leq \| \mathbf{v} \|_{*,h}\| \mathbf{w} \|_{*,h},~~b_h(\mathbf{v}, q)\leq C_b\|\mathbf{v}\|_{1,h}\|q\|_{*,pre},\\
&\mathcal{A}_h(\mathbf{v},q;\mathbf{w},r)\leq C_{A}\interleave(\mathbf{v},q)\|_*\|(\mathbf{w},r)\interleave_*,
\end{aligned}
\end{equation}

With the help of  (\ref{korn_ineq}) and (\ref{trac_IFE_inequality1}), we can prove the following coercivity of the bilinear form $a_h(\cdot,\cdot)$. The proof is similar to the discontinuous Galerkin method (see, e.g., \cite{di2011mathematical}) and so is omitted.
\begin{lemma}\label{lem_cor}
There exists a positive constant $C_{a}$  such that 
\begin{equation}\label{inequ_cor}
a_h(\mathbf{v}_h,\mathbf{v}_h)\geq C_{a}\|\mathbf{v}_h\|^2_{1,h}\qquad \forall (\mathbf{v}_h,q_h)\in\widetilde{\boldsymbol{V}Q_{h,0}}(\Omega)
\end{equation}
is true for  $\gamma=-1$ with an arbitrary $\eta\geq 0$ and is true for $\gamma=1$ with a  sufficiently large $\eta$. 
\end{lemma}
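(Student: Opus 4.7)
The plan is to mimic the classical discontinuous Galerkin coercivity argument, with the only non-standard ingredient being the IFE trace inequality of Lemma~\ref{lem_trac_IFE} that controls the averaged normal flux across each interface face by the elementwise energy. Setting $\mathbf{u}_h=\mathbf{v}_h$ in the definition of $a_h$ in \eqref{def_Ah} gives
\begin{equation*}
a_h(\mathbf{v}_h,\mathbf{v}_h)=\sum_{T\in\mathcal{T}_h}\|\sqrt{2\mu_h}\,\boldsymbol{\epsilon}(\mathbf{v}_h)\|^2_{L^2(T)}+\sum_{F\in\mathcal{F}_h^\Gamma}\frac{1+\eta}{h_F}\|[\mathbf{v}_h]_F\|^2_{L^2(F)}-(1+\gamma)\,\mathcal{S}(\mathbf{v}_h),
\end{equation*}
where $\mathcal{S}(\mathbf{v}_h):=\sum_{F\in\mathcal{F}_h^\Gamma}\int_F\{2\mu_h\boldsymbol{\epsilon}(\mathbf{v}_h)\mathbf{n}_F\}_F\cdot[\mathbf{v}_h]_F$; the first two sums are the built-in positive contribution, and the whole task is to dominate the last sum.

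The workhorse estimate is \eqref{trac_IFE_inequality1}, which, together with shape regularity and the piecewise constancy of $\mu_h$, yields
\begin{equation*}
\sum_{F\in\mathcal{F}_h^\Gamma}h_F\|\{2\mu_h\boldsymbol{\epsilon}(\mathbf{v}_h)\mathbf{n}_F\}_F\|^2_{L^2(F)}\le C_t\sum_{T\in\mathcal{T}_h}\|\sqrt{2\mu_h}\,\boldsymbol{\epsilon}(\mathbf{v}_h)\|^2_{L^2(T)}
\end{equation*}
for a constant $C_t$ depending only on shape regularity and $\mu^\pm$. This lets me absorb the averaged-flux piece hidden in $\|\mathbf{v}_h\|_{*,h}^2$ by the other two pieces, so that after invoking \eqref{korn_ineq} I obtain
\begin{equation*}
\|\mathbf{v}_h\|_{1,h}^2\le C\Bigl(\sum_{T\in\mathcal{T}_h}\|\sqrt{2\mu_h}\,\boldsymbol{\epsilon}(\mathbf{v}_h)\|^2_{L^2(T)}+\sum_{F\in\mathcal{F}_h^\Gamma}h_F^{-1}\|[\mathbf{v}_h]_F\|^2_{L^2(F)}\Bigr).
\end{equation*}
When $\gamma=-1$ the cross term $\mathcal{S}(\mathbf{v}_h)$ vanishes, so for any $\eta\ge 0$ the two remaining sums already dominate the right-hand side of the last display, and coercivity is immediate.

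When $\gamma=1$ I would first bound the cross term by Cauchy--Schwarz and Young with a free parameter $\delta>0$,
\begin{equation*}
2|\mathcal{S}(\mathbf{v}_h)|\le\delta\sum_{F\in\mathcal{F}_h^\Gamma}h_F\|\{2\mu_h\boldsymbol{\epsilon}(\mathbf{v}_h)\mathbf{n}_F\}_F\|^2_{L^2(F)}+\delta^{-1}\sum_{F\in\mathcal{F}_h^\Gamma}h_F^{-1}\|[\mathbf{v}_h]_F\|^2_{L^2(F)},
\end{equation*}
and then apply the trace bound above to reduce $a_h(\mathbf{v}_h,\mathbf{v}_h)$ to at least
\begin{equation*}
(1-\delta C_t)\sum_{T\in\mathcal{T}_h}\|\sqrt{2\mu_h}\,\boldsymbol{\epsilon}(\mathbf{v}_h)\|^2_{L^2(T)}+(1+\eta-\delta^{-1})\sum_{F\in\mathcal{F}_h^\Gamma}h_F^{-1}\|[\mathbf{v}_h]_F\|^2_{L^2(F)}.
\end{equation*}
Choosing $\delta=1/(2C_t)$ and any $\eta\ge 2C_t$ makes both coefficients strictly positive, after which the trace-controlled Korn argument of the previous paragraph produces the desired $C_a$ independent of $h$ and of the interface location. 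The main thing to guard against --- and the one IFE-specific obstacle --- is the uniformity of \eqref{trac_IFE_inequality1} in the position of $\Gamma$ relative to the mesh; this is exactly what Lemma~\ref{lem_trac_IFE} delivers, and it is also where the unavoidable dependence of $C_t$ (hence of the admissible threshold for $\eta$ and of $C_a$) on the contrast $\mu^+/\mu^-$ is inherited.
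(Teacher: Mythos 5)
Your overall strategy is exactly the one the paper intends: the paper omits the proof of Lemma~\ref{lem_cor}, saying only that it follows the standard discontinuous Galerkin coercivity argument based on (\ref{korn_ineq}) and (\ref{trac_IFE_inequality1}), and your symmetric/antisymmetric split of $a_h(\mathbf{v}_h,\mathbf{v}_h)$, Young's inequality with a free parameter, and the IFE trace inequality is precisely that argument.

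One step is stated too strongly. Your ``workhorse estimate'' bounds $\sum_{F\in\mathcal{F}_h^\Gamma}h_F\|\{2\mu_h\boldsymbol{\epsilon}(\mathbf{v}_h)\mathbf{n}_F\}_F\|^2_{L^2(F)}$ by $C_t\sum_{T}\|\sqrt{2\mu_h}\,\boldsymbol{\epsilon}(\mathbf{v}_h)\|^2_{L^2(T)}$, i.e.\ by the strain energy alone. What (\ref{trac_IFE_inequality1}) actually delivers is $\|\nabla\mathbf{v}_h\|_{L^2(\partial T)}\le Ch_T^{-1/2}|\mathbf{v}_h|_{H^1(T)}$, so the right-hand side you may legitimately write is $C_t\|\mathbf{v}_h\|^2_{1,h}$ (the full broken gradient), not the symmetric-gradient energy: on a single element the gradient is not controlled by the strain (rigid rotations), and the sliver-element extension argument of Appendix~\ref{sec_app_b} controls $|\mathbf{v}_h^\pm|_{H^1(T)}$ by $|\mathbf{v}_h|_{H^1(T)}$, not $\|\boldsymbol{\epsilon}(\mathbf{v}_h^\pm)\|_{L^2(T)}$ by the broken strain. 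The repair is routine: bound the cross term by $\delta C_t\|\mathbf{v}_h\|^2_{1,h}+\delta^{-1}\sum_{F}h_F^{-1}\|[\mathbf{v}_h]_F\|^2_{L^2(F)}$ and then invoke the broken Korn inequality in the form $\|\mathbf{v}_h\|^2_{1,h}\le C_K\bigl(\sum_{T}\|\sqrt{2\mu_h}\,\boldsymbol{\epsilon}(\mathbf{v}_h)\|^2_{L^2(T)}+\sum_{F}h_F^{-1}\|[\mathbf{v}_h]_F\|^2_{L^2(F)}\bigr)$ to absorb the gradient term for $\delta$ small; this slightly alters the bookkeeping (the absorption also eats into the jump coefficient, so the admissible threshold for $\eta$ changes) but nothing essential. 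With that correction your proof is complete and coincides with the argument the paper has in mind.
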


\subsection{Inf-sup stability} We first show the inf-sup stability of $b_h(\cdot,\cdot)$ in the following lemma.
\begin{lemma}\label{lem_infsup1}
There exists a positive constant $C_2$ such that for all $(\mathbf{v}_h, q_h)\in\widetilde{\boldsymbol{V}Q_{h,0}}(\Omega)$,
\begin{equation}\label{infsup1}
C_2\|q_h\|_{L^2(\Omega)}\leq \sup_{(\mathbf{v}^\prime_h, q_h^\prime)\in\widetilde{\boldsymbol{V}Q_{h,0}}(\Omega)}\frac{b_h(\mathbf{v}^\prime_h, q_h) }{\|\mathbf{v}^\prime_h\|_{1,h}}+\|\mathbf{v}_h\|_{1,h}.
\end{equation}
\end{lemma}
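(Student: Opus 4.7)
The plan is a Fortin/Verf\"urth-type argument adapted to the coupled IFE space. Given $(\mathbf{v}_h,q_h)\in\widetilde{\boldsymbol{V}Q_{h,0}}(\Omega)$, I will exploit $q_h\in L_0^2(\Omega)$ and the continuous Stokes inf-sup on $(H_0^1(\Omega)^N,L_0^2(\Omega))$ to obtain $\mathbf{w}\in H_0^1(\Omega)^N$ with $-\nabla\cdot\mathbf{w}=q_h$ and $\|\mathbf{w}\|_{H^1(\Omega)}\leq C\|q_h\|_{L^2(\Omega)}$. The test velocity is built as $\mathbf{w}_h^{\ast}=\widetilde{\Pi}_{\mathbf{v}}^{\Gamma}\mathbf{w}+\sum_{T\in\mathcal{T}_h}\sum_{i=1}^{N}\alpha_T^{(i)}b_T\mathbf{e}_i$, with the elementwise bubble coefficients $\alpha_T^{(i)}$ chosen so that $\int_T\mathbf{w}_h^{\ast}=\int_T\mathbf{w}$ for every $T$; the bubble slot is available because it belongs to $\widetilde{\boldsymbol{V}Q_h}(T)=\widetilde{\boldsymbol{P}_1P_1}(T)\oplus(\mbox{span}\{b_T\}^N,\{0\})$. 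Lemma~\ref{lem_h1_inter} together with standard bubble scaling yields $\|\mathbf{w}_h^{\ast}\|_{1,h}\leq C\|\mathbf{w}\|_{H^1(\Omega)}\leq C\|q_h\|_{L^2(\Omega)}$. Pairing $\mathbf{w}_h^{\ast}$ with any admissible IFE pressure adjusted to have zero mean places the pair in $\widetilde{\boldsymbol{V}Q_{h,0}}(\Omega)$; note that the supremand $b_h(\mathbf{v}_h^\prime,q_h)/\|\mathbf{v}_h^\prime\|_{1,h}$ depends only on the velocity component.

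The heart of the proof is to relate $b_h(\mathbf{w}_h^{\ast},q_h)$ to $-\int_\Omega q_h\nabla\cdot\mathbf{w}=\|q_h\|_{L^2(\Omega)}^2$. I will write $-\sum_{T}\int_T q_h\nabla\cdot\mathbf{w}_h^{\ast}=\|q_h\|_{L^2(\Omega)}^2-\sum_T\int_T q_h\nabla\cdot(\mathbf{w}_h^{\ast}-\mathbf{w})$, then integrate by parts on each element, splitting across $\Gamma_{h,T}$ on $T\in\mathcal{T}_h^\Gamma$; this is legitimate because both $\mathbf{w}$ and $\mathbf{w}_h^{\ast}$ are continuous across $\Gamma_{h,T}$ while $q_h$ may jump there. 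The decisive simplification is that condition (\ref{dis_jp4}) forces $\nabla q_h|_T\in P_0(T)^N$, so the Fortin property $\int_T(\mathbf{w}_h^{\ast}-\mathbf{w})=\mathbf{0}$ annihilates the volume term $\int_T\nabla q_h\cdot(\mathbf{w}_h^{\ast}-\mathbf{w})$. Reassembling the element-boundary pieces by the identity $[q_h\psi]_F=\{q_h\}_F[\psi]_F+[q_h]_F\{\psi\}_F$, using $[\mathbf{w}]_F=\mathbf{0}$ and the fact that $[\mathbf{w}_h^{\ast}]_F=\mathbf{0}$ on all $F\notin\mathcal{F}_h^\Gamma$ (IFE velocities agree at shared nodes and the interface corrections vanish on uncut faces), the term $\sum_{F\in\mathcal{F}_h^\Gamma}\int_F\{q_h\}_F[\mathbf{w}_h^{\ast}\cdot\mathbf{n}_F]_F$ cancels the explicit jump correction in $b_h$; boundary integrals on $\partial\Omega$ vanish since $\mathbf{w},\mathbf{w}_h^{\ast}$ are zero there. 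After this bookkeeping one arrives at
\[
b_h(\mathbf{w}_h^{\ast},q_h)=\|q_h\|_{L^2(\Omega)}^2-\sum_{T\in\mathcal{T}_h^\Gamma}\int_{\Gamma_{h,T}}[q_h]_{\Gamma_{h,T}}(\mathbf{w}_h^{\ast}-\mathbf{w})\cdot\mathbf{n}_h-\sum_{F\in\mathcal{F}_h^\Gamma}\int_F[q_h]_F(\{\mathbf{w}_h^{\ast}\}_F-\mathbf{w})\cdot\mathbf{n}_F.
\]

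These two residual integrals are the main obstacle and the reason the extra $\|\mathbf{v}_h\|_{1,h}$ summand appears: the form $b_h$ carries no compensating integral on $\Gamma_{h,T}$, yet the IFE space intrinsically couples $[q_h]_{\Gamma_{h,T}}$ to the velocity through the explicit formula in Lemma~\ref{lem_IFEbasis}. The pressure-jump inequalities (\ref{trac_IFE_inequality_qj}) and (\ref{ineq_jup_v1}) of Lemma~\ref{lem_trac_IFE} are tailor-made for this situation, converting $[q_h]_{\Gamma_{h,T}}$ and $[q_h]_F$ into multiples of $|\mathbf{v}_h|_{H^1}$ on the corresponding patches. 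Combining them with the trace estimates $\|\mathbf{w}_h^{\ast}-\mathbf{w}\|_{L^2(\Gamma_{h,T})}\leq Ch_T^{1/2}\|\mathbf{w}\|_{H^1(\omega_T)}$ and $\|\{\mathbf{w}_h^{\ast}\}_F-\mathbf{w}\|_{L^2(F)}\leq Ch_F^{1/2}\|\mathbf{w}\|_{H^1(\omega_F)}$ (which follow from (\ref{tac_ine}), Lemma~\ref{lem_h1_inter}, and bubble scaling), Cauchy--Schwarz bounds both residuals by $C\|\mathbf{v}_h\|_{1,h}\|\mathbf{w}\|_{H^1(\Omega)}\leq C\|\mathbf{v}_h\|_{1,h}\|q_h\|_{L^2(\Omega)}$. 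Thus $\|q_h\|_{L^2(\Omega)}^2\leq b_h(\mathbf{w}_h^{\ast},q_h)+C\|\mathbf{v}_h\|_{1,h}\|q_h\|_{L^2(\Omega)}\leq \|\mathbf{w}_h^{\ast}\|_{1,h}\sup_{(\mathbf{v}_h^\prime,q_h^\prime)}\frac{b_h(\mathbf{v}_h^\prime,q_h)}{\|\mathbf{v}_h^\prime\|_{1,h}}+C\|\mathbf{v}_h\|_{1,h}\|q_h\|_{L^2(\Omega)}$; dividing by $\|q_h\|_{L^2(\Omega)}$ and using $\|\mathbf{w}_h^{\ast}\|_{1,h}\leq C\|q_h\|_{L^2(\Omega)}$ yields (\ref{infsup1}).
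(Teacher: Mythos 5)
Your proposal is correct and follows essentially the same route as the paper: a continuous right inverse of the divergence, a Fortin-type operator built from $\widetilde{\Pi}_{\mathbf{v}}^{\Gamma}$ plus an elementwise bubble correction preserving $\int_T\mathbf{w}$ (the paper's $I_h^{new}=\widetilde{\Pi}_{\mathbf{v}'}^{\Gamma}+\pi_h(\mathrm{id}-\widetilde{\Pi}_{\mathbf{v}'}^{\Gamma})$), elementwise integration by parts where condition (\ref{dis_jp4}) and the moment condition annihilate the volume residual, and the pressure-jump inequalities (\ref{trac_IFE_inequality_qj}) and (\ref{ineq_jup_v1}) converting the remaining face and interface residuals into the $\|\mathbf{v}_h\|_{1,h}$ term. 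The only differences are cosmetic (sign conventions and the grouping of the residual terms).
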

\begin{proof}
Since $q_h\in Q$,  there is a function $\mathbf{v}^\prime\in \boldsymbol{V} $ satisfying (see \cite[Theorem 6.5]{di2011mathematical})
\begin{equation}\label{pro_inf_ad1}
\nabla \cdot \mathbf{v}^\prime=q_h~~\mbox{ and }~~ \|\mathbf{v}^\prime\|_{H^1(\Omega)}\leq C\|q_h\|_{L^2(\Omega)}.
\end{equation}
Integration by parts gives
\begin{equation}\label{pro_qhinf1}
\begin{aligned}
\|q_h\|_{L^2(\Omega)}^2&=\int_{\Omega}q_h\nabla\cdot \mathbf{v}^\prime=-\sum_{T\in\mathcal{T}_h}\int_T\nabla q_h\cdot \mathbf{v}^\prime+\sum_{F\in\mathcal{F}_h^\Gamma}\int_F[q_h]_F\mathbf{n}_F\cdot\mathbf{v}^\prime-\sum_{T\in\mathcal{T}_h^\Gamma}\int_{\Gamma_{h,T}}[\![q_h^\pm]\!]\mathbf{v}^\prime\cdot\mathbf{n}_h.
\end{aligned}
\end{equation}
Define an operator $\pi_h$ by  
\begin{equation}\label{def_pi_0}
(\pi_h \mathbf{w})|_T\in \mbox{span}\{b_T\}^N, \quad \int_T (\mathbf{w}-\pi_h \mathbf{w})=\mathbf{0} \quad \forall T\in\mathcal{T}_h~~\forall \mathbf{w}\in L^2(\Omega)^N.
\end{equation}
Let $I_h^{new}\mathbf{v}^\prime=\widetilde{\Pi}_{\mathbf{v}^\prime}^\Gamma\mathbf{v}^\prime+\pi_h(\mathbf{v}^\prime-\widetilde{\Pi}_{\mathbf{v}^\prime}^\Gamma\mathbf{v}^\prime)$. From (\ref{def_tilde_ih}) and the property of Scott-Zhang interpolation, we have $(I_h^{new}\mathbf{v}^\prime)|_{\partial \Omega}=\mathbf{0}$. By (\ref{explicit_formula}), we know there exists at least one function $q_I^\prime$ such that $(I_h^{new}\mathbf{v}^\prime, q_I^\prime)\in \widetilde{\boldsymbol{V}Q_{h,0}}(\Omega)$.
Integration by parts again yields 
\begin{equation}\label{pro_bh_eq1}
\begin{aligned}
-b_h&(I_h^{new}\mathbf{v}^\prime,q_h)=\sum_{T\in\mathcal{T}_h}\int_T q_h\nabla\cdot I_h^{new}\mathbf{v}^\prime-\sum_{F\in\mathcal{F}_h^\Gamma}\int_F\{q_h\}_F[I_h^{new}\mathbf{v}^\prime]_F\cdot\mathbf{n}_F\\
&=-\sum_{T\in\mathcal{T}_h}\int_T\nabla q_h\cdot I_h^{new}\mathbf{v}^\prime+\sum_{F\in\mathcal{F}_h^\Gamma}\int_F[q_h]_F\mathbf{n}_F\cdot\{I_h^{new}\mathbf{v}^\prime\}_F-\sum_{T\in\mathcal{T}_h^\Gamma}\int_{\Gamma_{h,T}}[\![q_h^\pm]\!]I_h^{new}\mathbf{v}^\prime \cdot\mathbf{n}_h,
\end{aligned}
\end{equation}
where we used $[I_h^{new}\mathbf{v}^\prime]_F=\mathbf{0},~[q_h]_F=0~\forall F\in\mathcal{F}_h^{non}$ and  $[I_h^{new}\mathbf{v}^\prime]_{\Gamma_{h,T}}=\mathbf{0}~\forall T\in\mathcal{T}_h^\Gamma$.
Combining (\ref{pro_qhinf1}) and (\ref{pro_bh_eq1}) yields 
\begin{equation}\label{pro_i0}
\begin{aligned}
\|q_h\|_{L^2(\Omega)}^2=&-b_h(I_h^{new}\mathbf{v}^\prime,q_h)- \sum_{T\in\mathcal{T}_h}\int_T\nabla q_h\cdot (\mathbf{v}^\prime-I_h^{new}\mathbf{v}^\prime)+\sum_{F\in\mathcal{F}_h^{\Gamma}}\int_F[q_h]_F\{\mathbf{v}^\prime-I_h^{new}\mathbf{v}^\prime\}_F\cdot\mathbf{n}_F\\
&\quad-\sum_{T\in\mathcal{T}_h^\Gamma}\int_{\Gamma_{h,T}}[\![q_h^\pm]\!](\mathbf{v}^\prime-I_h^{new}\mathbf{v}^\prime)\cdot\mathbf{n}_h:=({\rm II})_1+({\rm II})_2+({\rm II})_3+({\rm II})_4.
\end{aligned}
\end{equation}
By the definition of $I_h^{new}$, we have the relation 
\begin{equation}\label{rela_ih}
\mathbf{v}^\prime-I_h^{new}\mathbf{v}^\prime=(\mathbf{v}^\prime-\widetilde{\Pi}_{\mathbf{v}^\prime}^\Gamma\mathbf{v}^\prime)-\pi_h(\mathbf{v}^\prime-\widetilde{\Pi}_{\mathbf{v}^\prime}^\Gamma\mathbf{v}^\prime).
\end{equation}
 We then use (\ref{def_pi_0}) to get $({\rm II})_2=0$ since $(\nabla q_h)|_T\in P_0(T)^N$ from (\ref{dis_jp4}). 
To estimate other terms, we first use (\ref{rela_ih}) to deduce that, for $m=0,1$,
\begin{equation}\label{est_inew1}
\begin{aligned}
|\mathbf{v}^\prime-&I_h^{new}\mathbf{v}^\prime|_{H^m(T)}\leq  |(\mathbf{v}^\prime-\widetilde{\Pi}_{\mathbf{v}^\prime}^\Gamma\mathbf{v}^\prime)|_{H^m(T)}+|\pi_h(\mathbf{v}^\prime-\widetilde{\Pi}_{\mathbf{v}^\prime}^\Gamma\mathbf{v}^\prime)|_{H^m(T)}\\
&\leq  |(\mathbf{v}^\prime-\widetilde{\Pi}_{\mathbf{v}^\prime}^\Gamma\mathbf{v}^\prime)|_{H^m(T)}+Ch_T^{-m}\|\pi_h(\mathbf{v}^\prime-\widetilde{\Pi}_{\mathbf{v}^\prime}^\Gamma\mathbf{v}^\prime)\|_{L^2(T)}\\
&\leq  |(\mathbf{v}^\prime-\widetilde{\Pi}_{\mathbf{v}^\prime}^\Gamma\mathbf{v}^\prime)|_{H^m(T)}+Ch_T^{-m}\|\mathbf{v}^\prime-\widetilde{\Pi}_{\mathbf{v}^\prime}^\Gamma\mathbf{v}^\prime\|_{L^2(T)}\leq Ch^{1-m}|\mathbf{v}^\prime|_{H^1(\omega_T)},
\end{aligned}
\end{equation}
where we used the inverse inequality in the second inequality, the property $\|\pi_h \mathbf{w}\|_{L^2(T)}\leq C\|\mathbf{w}\|_{L^2(T)}$ for all $\mathbf{w}\in L^2(T)$ in the third inequality, and Lemma~\ref{lem_h1_inter} in the last inequality. 
By the triangle inequality, (\ref{est_inew1}) also implies the stability of $I_h^{new}$:
$
\|I_h^{new}\mathbf{v}^\prime\|_{1,h}\leq \|\mathbf{v}^\prime-I_h^{new}\mathbf{v}^\prime\|_{1,h}+|\mathbf{v}^\prime|_{H^1(\Omega)}\leq C|\mathbf{v}^\prime|_{H^1(\Omega)}.
$
Therefore,
\begin{equation}\label{pro_i1}
|({\rm II})_1|\leq C \frac{|b_h(I_h^{new}\mathbf{v}^\prime, q_h)|}{\| I_h^{new}\mathbf{v}^\prime \|_{1,h}} |\mathbf{v}^\prime|_{H^1(\Omega)} \leq C \left( \sup_{(\mathbf{v}^\prime_h, q_h^\prime)\in\widetilde{\boldsymbol{V}Q_{h,0}}(\Omega)}\frac{b_h(\mathbf{v}^\prime_h, q_h) }{\| \mathbf{v}^\prime_h \|_{1,h}}\right) |\mathbf{v}^\prime|_{H^1(\Omega)}.
\end{equation}
As for  Term $({\rm II})_3$, we use (\ref{ineq_jup_v1}), the standard trace inequality and (\ref{est_inew1}) to get 
\begin{equation}\label{pro_i2}
\begin{aligned}
&|({\rm II})_3|^2\leq \left(\sum_{F\in\mathcal{F}_h^\Gamma}h_F\|[q_h]_F\|^2_{L^2(F)} \right)\sum_{F\in\mathcal{F}_h^\Gamma}h_F^{-1} \|\{\mathbf{v}^\prime- I_h^{new}\mathbf{v}^\prime\}_F \|^2_{L^2(F)}\\
&\leq C\|\mathbf{v}_h\|^2_{1,h}\sum_{T\in\mathcal{T}_h^\Gamma}(h_T^{-2}\|\mathbf{v}^\prime-I_h^{new}\mathbf{v}^\prime\|^2_{L^2(T)}+|\mathbf{v}^\prime-I_h^{new}\mathbf{v}^\prime|^2_{H^1(T)})\leq C\|\mathbf{v}_h\|^2_{1,h}|\mathbf{v}^\prime|^2_{H^1(\Omega)}.
\end{aligned}
\end{equation}
For Term $({\rm II})_4$, similarly to (\ref{pro_i2}), we use (\ref{trac_IFE_inequality_qj}) and the trace inequality (\ref{tac_ine}) with $\Gamma_{R_T}$ and $R_T$ replaced by $\Gamma_{h,T}$ and $T$ (see \cite[Lemma 1]{guzman2018inf}) to obtain
$
|({\rm II})_4|\leq C\|\mathbf{v}_h\|_{1,h}|\mathbf{v}^\prime|_{H^1(\Omega)}.
$
Substituting the above estimates of $({\rm II})_1$--$({\rm II})_4$ into (\ref{pro_i0}) and using (\ref{pro_inf_ad1}) completes the proof.
\end{proof}

We are ready to prove the following inf-sup stability of the $\mathcal{A}_h$ form. 
\begin{theorem}\label{lem_inf_sup_ulti}
There exists a positive constant $C_{s}$ such that
\begin{equation}\label{inf_sup_ulti}
C_{s}\interleave(\mathbf{v}_h, q_h)\interleave \leq \sup_{(\mathbf{w}_h, r_h)\in \widetilde{\boldsymbol{V}Q_{h,0}}(\Omega)}\frac{\mathcal{A}_h(\mathbf{v}_h, q_h; \mathbf{w}_h, r_h)}{\interleave(\mathbf{w}_h, r_h)\interleave}\qquad \forall (\mathbf{v}_h, q_h)\in \widetilde{\boldsymbol{V}Q_{h,0}}(\Omega).
\end{equation}
\end{theorem}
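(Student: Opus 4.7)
The strategy is the classical "composite test function" trick for mixed discretizations: combine the coercivity of $a_h$ (Lemma~\ref{lem_cor}) with the inf-sup condition for $b_h$ on the IFE pair (Lemma~\ref{lem_infsup1}) to simultaneously control $\|\mathbf{v}_h\|_{1,h}$ and $\|q_h\|_{L^2(\Omega)}$. Since $\widetilde{\boldsymbol{V}Q_{h,0}}(\Omega)$ is finite-dimensional, the supremum in (\ref{infsup1}) is attained, so given any $(\mathbf{v}_h, q_h)$ I can select a pressure-testing velocity $\mathbf{v}_h^*\in \widetilde{\boldsymbol{V}Q_{h,0}}(\Omega)$, rescaled so that $\|\mathbf{v}_h^*\|_{1,h}=\|q_h\|_{L^2(\Omega)}$ and
$$
b_h(\mathbf{v}_h^*, q_h)\geq C_2\|q_h\|_{L^2(\Omega)}^2-\|\mathbf{v}_h\|_{1,h}\|q_h\|_{L^2(\Omega)}.
$$

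Next I test $\mathcal{A}_h(\mathbf{v}_h, q_h;\cdot,\cdot)$ against the composite pair $(\mathbf{w}_h, r_h)=(\mathbf{v}_h+\delta\mathbf{v}_h^*,\,q_h)$, where $\delta>0$ is a small parameter to be fixed later. Expanding with the definition (\ref{def_Ah}), the symmetric $b_h$ contributions $b_h(\mathbf{v}_h,q_h)-b_h(\mathbf{v}_h,q_h)$ cancel exactly, leaving the clean identity
$$
\mathcal{A}_h(\mathbf{v}_h, q_h;\mathbf{w}_h, r_h)=a_h(\mathbf{v}_h,\mathbf{v}_h)+\delta\,a_h(\mathbf{v}_h,\mathbf{v}_h^*)+\delta\,b_h(\mathbf{v}_h^*,q_h).
$$
The coercivity from Lemma~\ref{lem_cor} controls the first term by $C_a\|\mathbf{v}_h\|_{1,h}^2$; the boundedness in (\ref{bd}) together with the norm equivalence (\ref{norm_eq1}) gives $|a_h(\mathbf{v}_h,\mathbf{v}_h^*)|\leq C\|\mathbf{v}_h\|_{1,h}\|q_h\|_{L^2(\Omega)}$; and the inf-sup choice of $\mathbf{v}_h^*$ handles the third term. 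Applying Young's inequality to absorb the cross terms $\|\mathbf{v}_h\|_{1,h}\|q_h\|_{L^2(\Omega)}$ partly into $\tfrac{C_a}{2}\|\mathbf{v}_h\|_{1,h}^2$ and partly into $O(\delta^2)\|q_h\|_{L^2(\Omega)}^2$, then fixing $\delta$ small enough that the coefficient of $\|q_h\|_{L^2(\Omega)}^2$ remains, say, $\geq \delta C_2/2$, yields
$$
\mathcal{A}_h(\mathbf{v}_h, q_h;\mathbf{w}_h, r_h)\geq C\bigl(\|\mathbf{v}_h\|_{1,h}^2+\|q_h\|_{L^2(\Omega)}^2\bigr)=C\interleave(\mathbf{v}_h,q_h)\interleave^2.
$$
Finally, the triangle inequality and $\|\mathbf{v}_h^*\|_{1,h}=\|q_h\|_{L^2(\Omega)}$ give $\interleave(\mathbf{w}_h,r_h)\interleave\leq C\interleave(\mathbf{v}_h,q_h)\interleave$, and dividing produces (\ref{inf_sup_ulti}).

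I do not expect a genuine obstacle at the level of this theorem: the difficulty has been pushed into Lemma~\ref{lem_infsup1}, where the surjectivity of $\nabla\cdot$ onto $L^2_0(\Omega)$ has to be realized within the IFE space by combining the continuous Bogovskii-type lift with $\widetilde\Pi_{\mathbf v}^\Gamma$ plus a bubble correction, producing the nonconforming residual $\|\mathbf{v}_h\|_{1,h}$ on the right-hand side of (\ref{infsup1}). The role of the present proof is simply to verify that this residual is harmless, which it is because it is absorbed into the coercivity contribution $C_a\|\mathbf{v}_h\|_{1,h}^2$ via Young's inequality. The one point that requires care is that the bubble part of $\mathbf{v}_h^*$ does not spoil the boundedness constant used for $a_h(\mathbf{v}_h,\mathbf{v}_h^*)$, but this is ensured by the norm equivalence (\ref{norm_eq1}) applied to $\widetilde{\boldsymbol{V}Q_{h,0}}(\Omega)$.
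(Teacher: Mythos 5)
There is a genuine gap, and it sits exactly at the point you dismiss as unproblematic. Your composite test pair is $(\mathbf{w}_h,r_h)=(\mathbf{v}_h+\delta\mathbf{v}_h^*,\,q_h)$, and the ``clean cancellation'' of the $b_h(\mathbf{v}_h,\cdot)$ terms depends on keeping the test pressure equal to $q_h$ while perturbing only the test velocity. But $\widetilde{\boldsymbol{V}Q_{h,0}}(\Omega)$ is not a product of a velocity space and a pressure space: on interface elements the pair $(\mathbf{w},r)$ must jointly satisfy the coupled jump conditions (\ref{dis_jp0}), and by Lemma~\ref{lem_IFEbasis} (see also Remark~\ref{rema_dep} and the explicit formula (\ref{explicit_formula})) the pressure component contains the velocity-dependent piece $c_N(z_T-I_{h,T}z_T)$ with $c_N=\mathbf{n}_h^T\sigma(\mu^--\mu^+,I_{h,T}\mathbf{w},0)\mathbf{n}_h$. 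Hence $(\mathbf{v}_h+\delta\mathbf{v}_h^*,q_h)$ lies in the space only if $(\mathbf{v}_h^*,0)$ does, i.e.\ only if $[\sigma(\mu_h,\mathbf{v}_h^*,0)\mathbf{n}_h]_{\Gamma_{h,T}}=\mathbf{0}$, which fails in general when $\mu^+\neq\mu^-$. Your supremum is therefore being taken over an inadmissible test function, and the identity you expand is not available.

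The paper's proof repairs exactly this: it constructs a companion pressure $q_h^*$ for $\mathbf{v}_h^*$ (taking $q$ with $(\mathbf{v}_h^*,q)\in\widetilde{\boldsymbol{V}Q_h}(\Omega)$, $I_hq=0$, then $q_h^*=q-\int_\Omega q$ to land in $L_0^2(\Omega)$), and tests with $(\mathbf{v}_h+\theta\mathbf{v}_h^*,\,q_h+\theta q_h^*)$. The price is that the $b_h$ terms no longer cancel: an extra term $-\theta\, b_h(\mathbf{v}_h,q_h^*)$ appears, and controlling it requires the nontrivial stability bound $\|q_h^*\|_{*,pre}\leq C_3\|\mathbf{v}_h^*\|_{1,h}$ (equation (\ref{cstar})), which is proved from the explicit structure $q|_T=c_N(z_T-I_{h,T}z_T)$ together with (\ref{IFE_inequ_v}). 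This estimate, and the admissibility of the composite pair, are the substantive content of the theorem beyond Lemmas~\ref{lem_cor} and~\ref{lem_infsup1}; your proposal omits both. The rest of your argument (Young's inequality, absorption into coercivity, the final norm comparison of the test pair) matches the paper once these pieces are inserted.
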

\begin{proof}
Since $\widetilde{\boldsymbol{V}Q_{h,0}}(\Omega)$ is a finite-dimensional space, we assume that the supremum in (\ref{infsup1}) is achieved at $(\mathbf{v}_h^*, q_h^*)\in \widetilde{\boldsymbol{V}Q_{h,0}}(\Omega)$ with $\|\mathbf{v}_h^*\|_{1,h}=\|q_h\|_{L^2(\Omega)}$. Then (\ref{infsup1}) becomes 
\begin{equation}\label{infsup2}
C_2\|q_h\|^2_{L^2(\Omega)}\leq b_h(\mathbf{v}_h^*, q_h)  + \|\mathbf{v}_h\|_{1,h}\|q_h\|_{L^2(\Omega)}.
\end{equation}
We note that $q_h^*$ depends on $\mathbf{v}_h^*$ and  is not unique. Next, we describe how to choose  $q_h^*$. We first construct a function $q$ such that $(\mathbf{v}_h^*, q)\in \widetilde{\boldsymbol{V}Q_{h}}(\Omega)$ and $I_hq=0$, and then set $q_h^*=q-\int_\Omega q$ so that $q_h^*\in L_0^2(\Omega)$. 
It is clear that $q|_{T}=0$ for all $T\in\mathcal{T}_h^{non}$. And, on each interface element $T\in\mathcal{T}_h^{\Gamma}$, by (\ref{explicit_formula}) it holds
$q|_{T}=c_N(z_T-I_{h,T}z_T)$ with $c_N=\mathbf{n}_{h}^T\sigma(\mu^--\mu^+,I_{h,T}\mathbf{v}_h^*,0)\mathbf{n}_h$.
Using  (\ref{def_zw}) and (\ref{IFE_inequ_v}) we get
\begin{equation*}
\begin{aligned}
&\sum_{F\in\mathcal{F}_h^\Gamma}h_F\|q\|^2_{L^2(F)}\leq \sum_{T\in\mathcal{T}_h^\Gamma}Ch_T^N |\nabla I_{h,T}\mathbf{v}_h^*|^2 \leq C |I_{h,T}\mathbf{v}_h^*|^2_{H^1(T)}\leq C \|\mathbf{v}_h^*\|^2_{1,h},\\
&\left|\int_\Omega q\right|^2\leq  C\sum_{T\in\mathcal{T}_h^\Gamma}\|q\|^2_{L^2(T)}\leq \sum_{T\in\mathcal{T}_h^\Gamma}Ch_T^N |\nabla I_{h,T}\mathbf{v}_h^*|^2 \leq C \|\mathbf{v}_h^*\|^2_{1,h}.
\end{aligned}
\end{equation*}
We then using the relation $q_h^*=q-\int_\Omega q$ to conclude that there exists a constant $C_3$ such that
\begin{equation}\label{cstar}
\|q_h^*\|_{*,pre}\leq C_3\| \mathbf{v}_h^*\|_{1,h}.
\end{equation}
By (\ref{def_Ah}), (\ref{bd}), (\ref{norm_eq1}), (\ref{cstar}) and the fact $\|\mathbf{v}_h^*\|_{1,h}=\|q_h\|_{L^2(\Omega)}$, inequality (\ref{infsup2}) becomes
\begin{equation*}
\begin{aligned}
C_2\|q_h\|^2_{L^2(\Omega)}&\leq  \mathcal{A}_h(\mathbf{v}_h, q_h; \mathbf{v}_h^*, q_h^*)-a_h(\mathbf{v}_h, \mathbf{v}_h^*)+b_h(\mathbf{v}_h, q_h^*)+\|\mathbf{v}_h\|_{1,h}\|q_h\|_{L^2(\Omega)}\\
&\leq \mathcal{A}_h(\mathbf{v}_h, q_h; \mathbf{v}_h^*, q_h^*) + (C_0^{2}+C_3C_b+1)\| \mathbf{v}_h \|_{1,h} \|q_h\|_{L^2(\Omega)}\\
&\leq \mathcal{A}_h(\mathbf{v}_h, q_h; \mathbf{v}_h^*, q_h^*) +\frac{(C_0^{2}+C_3C_b+1)^2}{2C_2} \| \mathbf{v}_h \|_{1,h}^2+ \frac{C_2}{2}\|q_h\|_{L^2(\Omega)}^2.
\end{aligned}
\end{equation*}
Combining this with
$C_a\|\mathbf{v}_h\|^2_{1,h}\leq \mathcal{A}_h(\mathbf{v}_h, q_h; \mathbf{v}_h, q_h)$, we get
\begin{equation}\label{pro_C2}
C_4 \interleave(\mathbf{v}_h, q_h )\interleave^2\leq \mathcal{A}_h(\mathbf{v}_h, q_h; \mathbf{v}_h + \theta \mathbf{v}_h^*, q_h + \theta q_h^*),
\end{equation}
for a suitable $\theta >0$ and a constant $C_4>0$ depending only on $\theta$, $C_0$, $C_2$, $C_3$, $C_a$ and $C_b$.
The desired result (\ref{inf_sup_ulti}) then follows from 
$
(\mathbf{v}_h + \theta \mathbf{v}_h^*, q_h + \theta q_h^*)=(\mathbf{v}_h, q_h )+\theta (\mathbf{v}_h^*, q_h^*)\in \widetilde{\boldsymbol{V}Q_{h,0}}(\Omega)
$
and
\begin{equation*}
\begin{aligned}
\interleave(\mathbf{v}_h + \theta \mathbf{v}_h^*, &q_h + \theta q_h^*)\interleave\leq\interleave(\mathbf{v}_h , q_h )\interleave+\theta(\|\mathbf{v}_h^*\|_{1,h}+\|q_h^*\|_{L^2(\Omega)})\\
&\leq\interleave(\mathbf{v}_h , q_h )\interleave+\theta(1+C_3)\|\mathbf{v}_h^*\|_{1,h}\leq (1+\theta+\theta C_3)\interleave(\mathbf{v}_h, q_h )\interleave,
\end{aligned}
\end{equation*}
where we have used (\ref{cstar}) and the fact $\|\mathbf{v}_h^*\|_{1,h}=\|q_h\|_{L^2(\Omega)}$ in the derivation.
\end{proof}

\subsection{Consistency}
Recalling that $\mathbf{u}_E^\pm$ and $p_E^\pm$ are extensions of $\mathbf{u}^\pm$ and $p^\pm$, we can extend (\ref{originalpb1}) and (\ref{originalpb2}) from $\Omega^\pm$ to $\Omega$ as
$
\tilde{\mathbf{f}}_E^\pm:=-\nabla\cdot (2\mu^\pm\boldsymbol{\epsilon}(\mathbf{u}_E^\pm)) +\nabla p_E^\pm$ and 
$\tilde{h}_E^\pm:=\nabla\cdot \mathbf{u}_E^\pm.$
Obviously, $\tilde{\mathbf{f}}_E^\pm=\mathbf{f}_E^\pm$ and $\tilde{h}_E^\pm=0$ in $\Omega^\pm$, while $\tilde{\mathbf{f}}_E^\pm\not=\mathbf{f}_E^\pm$ and $\tilde{h}_E^\pm\not=0$ in $\Omega_h^\pm\backslash\Omega^\pm$. For brevity, we let $(\mathbf{u}^{BK}, p^{BK}):=E_h^{BK}(\mathbf{u}, p)$ and  define  $\tilde{\mathbf{f}}^{BK}$ by $\tilde{\mathbf{f}}^{BK}|_{\Omega_h^\pm}= \tilde{\mathbf{f}}_E^\pm|_{\Omega_h^\pm}$ and $\tilde{h}^{BK}$ by $\tilde{h}^{BK}|_{\Omega_h^\pm}= \tilde{h}_E^\pm|_{\Omega_h^\pm}$. Then it holds on the whole domain $\Omega$ that 
\begin{equation*}
-\nabla\cdot (2\mu_h\boldsymbol{\epsilon}(\mathbf{u}^{BK})) +\nabla p^{BK}=\tilde{\mathbf{f}}^{BK},  \quad
\nabla\cdot \mathbf{u}^{BK}=\tilde{h}^{BK},
\end{equation*}
where the  gradient and the divergence (also denoted by $\nabla $ and $\nabla \cdot$ for simplicity) are understood in a piecewise sense since $\mathbf{u}^{BK}$ and $p^{BK}$ are broken across $\Gamma_h$.
For all $(\mathbf{w}_h,r_h) \in \widetilde{\boldsymbol{V}Q_{h,0}}(\Omega)$,  integrating by parts  and using the facts $[\mathbf{u}^{BK}]_F=\mathbf{0}$ and $[p^{BK}]_F=0$, we infer that 
\begin{equation*}
\mathcal{A}_h(\mathbf{u}^{BK}, p^{BK}; \mathbf{w}_h,r_h)=\int_\Omega \tilde{\mathbf{f}}^{BK}\cdot \mathbf{w}_h+ \int_\Omega\tilde{h}^{BK}r_h-\int_{\Gamma_h} [\![\sigma^\pm \mathbf{n}_h]\!]\cdot \mathbf{w}_h,
\end{equation*}
where  the notation $\sigma^\pm:=\sigma(\mu^\pm,\mathbf{u}_{E}^{\pm}, p_{E}^{\pm})$ is used for simplicity. Combining this with (\ref{IFE_method}) and using the facts $\mathbf{f}^{BK}|_{\Omega\backslash\Omega^\triangle}=\tilde{\mathbf{f}}^{BK}|_{\Omega\backslash\Omega^\triangle}$  and $\tilde{h}^{BK}|_{\Omega\backslash\Omega^\triangle}=0$ yields
\begin{equation}\label{consis_eq}
\mathcal{A}_h(\mathbf{u}^{BK}-\mathbf{U}_h, p^{BK}-P_h; \mathbf{w}_h,r_h)=\underbrace{\int_{\Omega^\triangle} (\tilde{\mathbf{f}}^{BK}-\mathbf{f}^{BK})\cdot \mathbf{w}_h}_{({\rm III})_1}+\underbrace{\int_{\Omega^\triangle}\tilde{h}^{BK}r_h}_{({\rm III})_2}-\underbrace{\int_{\Gamma_h} ([\![\sigma^\pm \mathbf{n}_h]\!]-\mathbf{g}_h)\cdot \mathbf{w}_h}_{({\rm III})_3}.
\end{equation}
We now estimate the right-hand side of (\ref{consis_eq}) term by term. 

\emph{Term $({\rm III})_1$}. Using the Cauchy-Schwarz inequality and the triangle inequality we get
\begin{equation*}
|({\rm III})_1|\leq (\|\tilde{\mathbf{f}}^{BK}\|_{L^2(\Omega^\triangle)}+\|\mathbf{f}^{BK}\|_{L^2(\Omega^\triangle)})\|\mathbf{w}_h\|_{L^2(\Omega^\triangle)}.
\end{equation*}
Suppose $(\mathbf{u},p)\in \widetilde{\boldsymbol{H}^2H^1}(\mathbf{g})$, then by the definition of $\tilde{\mathbf{f}}^s_E$ and the extension stability (\ref{extension}) we have
\begin{equation*}
\begin{aligned}
\|\tilde{\mathbf{f}}^{BK}\|_{L^2(\Omega^\triangle)}\leq \sum_{s=\pm}\|\tilde{\mathbf{f}}^s_E\|_{L^2(\Omega)}\leq C(\|\mathbf{u}\|_{H^2(\cup \Omega^\pm)}+\|p\|_{H^1(\cup \Omega^\pm)}).
\end{aligned}
\end{equation*}
Adding and subtracting $I_h^B\mathbf{w}_h$ which belongs to $H_0^1(\Omega)^N$,  we obtain 
\begin{equation*}
\begin{aligned}
\|\mathbf{w}_h\|_{L^2(\Omega^\triangle)}&\leq\|I_h^B\mathbf{w}_h\|_{L^2(\Omega^\triangle)}+\|\mathbf{w}_h-I_h^B\mathbf{w}_h\|_{L^2(\Omega^\triangle)}\\
&\leq Ch|I_h^B\mathbf{w}_h-\mathbf{w}_h+\mathbf{w}_h|_{H^1(\Omega)}+\|\mathbf{w}_h-I_h^B\mathbf{w}_h\|_{L^2(\Omega^\triangle)}\leq Ch\|\mathbf{w}_h\|_{1,h},
\end{aligned}
\end{equation*}
where in the second inequality we used Lemma~\ref{lem_strip}, the fact  $\Omega^\triangle\subset U(\Gamma,Ch^2)$ and Poincar\'e's inequality, and in the last inequality we used the triangle inequality and (\ref{IFE_FE_er}). Collecting the above inequalities and the assumption $\|\mathbf{f}^{BK}\|_{L^2(\Omega^\triangle)}\leq C\|\mathbf{f}\|_{L^2(\Omega)}$ gives the bound 
\begin{equation*}
|({\rm III})_1|\leq Ch(\|\mathbf{u}\|_{H^2(\cup\Omega^\pm)}+\|p\|_{H^1(\cup\Omega^\pm)})\|\mathbf{w}_h\|_{1,h}.
\end{equation*}

\emph{Term $({\rm III})_2$}. Analogously, we have
\begin{equation*}
\begin{aligned}
|({\rm III})_2|\leq \sum_{s=\pm}\|\nabla\cdot \mathbf{u}_E^\pm\|_{L^2(\Omega^\triangle)}\|r_h\|_{L^2(\Omega^\triangle)}\leq Ch\|\mathbf{u}\|_{H^2(\cup\Omega^\pm)}\|r_h\|_{L^2(\Omega)}.
\end{aligned}
\end{equation*}

\emph{Term $({\rm III})_3$}. We need the following lemma whose proof is postponed in Appendix~\ref{sec_app_c}.
\begin{lemma}\label{lem_gah_ineq}
For all $v\in H^1(\Omega)$, there exists a positive constant $C$ such that  
\begin{align}
&\|v\|^2_{L^2(\Gamma_h)}\leq C\|v\|^2_{L^2(\Gamma)}+Ch^2 \|\nabla v\|^2_{L^2(\Omega)}, \label{gah_ieq1}\\
&\|v-v\circ \mathbf{p}_h\|^2_{L^2(\Gamma_h)}\leq Ch^2 \|\nabla v\|^2_{L^2(\Omega)}.\label{gah_ieq2}
\end{align}
\end{lemma}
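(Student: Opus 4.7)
The plan is to prove \eqref{gah_ieq2} first and then derive \eqref{gah_ieq1} from it by the triangle inequality combined with a surface change of variables on $\Gamma_h$ via $\mathbf{p}_h$. A density argument in $H^1(\Omega)$, together with continuity of the traces on $\Gamma$ and on the Lipschitz surface $\Gamma_h$, reduces both estimates to the case $v\in C^1(\overline{\Omega})$.

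For \eqref{gah_ieq2}, I would use $\mathbf{p}_h(\mathbf{x})=\mathbf{x}+\varrho_h(\mathbf{x})\mathbf{n}_h$ together with $|\varrho_h|\leq Ch^2$ from \eqref{ph_esti}. For $\mathbf{x}\in \Gamma_{h,T}$ the fundamental theorem of calculus along the normal segment gives
\begin{equation*}
v(\mathbf{x})-v(\mathbf{p}_h(\mathbf{x})) = -\int_0^{\varrho_h(\mathbf{x})}(\nabla v)(\mathbf{x}+t\mathbf{n}_h)\cdot \mathbf{n}_h\,dt,
\end{equation*}
so Cauchy--Schwarz yields the pointwise bound $|v(\mathbf{x})-v(\mathbf{p}_h(\mathbf{x}))|^2\leq |\varrho_h(\mathbf{x})|\int_0^{|\varrho_h(\mathbf{x})|}|\nabla v(\mathbf{x}+t\mathbf{n}_h)|^2\,dt$. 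After integration over $\Gamma_h$ and using $|\varrho_h|\leq Ch^2$, the key step is to rewrite the remaining iterated integral as a bulk integral. Since $\mathbf{n}_h|_T$ is constant and orthogonal to the flat piece $\Gamma_{h,T}^{ext}$, the map $(\mathbf{x},t)\mapsto \mathbf{x}+t\mathbf{n}_h$ is piecewise isometric onto a one-sided tube $U_T\subset U(\Gamma,Ch^2)\subset\Omega$, and the tubes $\{U_T\}_{T\in\mathcal{T}_h^\Gamma}$ have finite overlap (disjoint on each side of $\Gamma_h$), yielding
\begin{equation*}
\int_{\Gamma_h}\!\!\int_0^{|\varrho_h|}\!\!|\nabla v|^2\,dt\,ds\leq C\|\nabla v\|^2_{L^2(\Omega)},
\end{equation*}
which combined with the factor $|\varrho_h|\leq Ch^2$ proves \eqref{gah_ieq2}.

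For \eqref{gah_ieq1}, I would split $v=(v-v\circ\mathbf{p}_h)+v\circ\mathbf{p}_h$ and apply the triangle inequality. The first piece is handled by \eqref{gah_ieq2}. For the second, I would change variables along $\mathbf{p}_h:\Gamma_h\to\Gamma$: by \eqref{ph_esti}, $\mathbf{n}_h-\mathbf{n}\circ\mathbf{p}=O(h)$ and $\varrho_h=O(h^2)$, so the surface Jacobian of $\mathbf{p}_h$ is $1+O(h)$, hence uniformly bounded above (and below) in $h$ and interface position; this gives $\|v\circ\mathbf{p}_h\|^2_{L^2(\Gamma_h)}\leq C\|v\|^2_{L^2(\Gamma)}$ and thus \eqref{gah_ieq1}.

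The main technical obstacle is the geometric bookkeeping for the two changes of variables, in particular verifying the uniform bound on the surface Jacobian of $\mathbf{p}_h$ and the bounded-overlap property of the one-sided tubes $U_T$. Both facts are consequences of the second-order approximation of $\Gamma$ by $\Gamma_h$ recorded in \eqref{ass_Gamma_h}--\eqref{ph_esti} and the assumption $h<\delta_0$, and are essentially already contained in the cut-FEM references cited in the paper (e.g.\ \cite{burman2018Acut}); the rest of the argument reduces to routine calculus.
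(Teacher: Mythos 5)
Your argument for \eqref{gah_ieq2} is correct and coincides with the paper's proof: the fundamental theorem of calculus along the segment from $\mathbf{x}$ to $\mathbf{p}_h(\mathbf{x})$, Cauchy--Schwarz with $|\varrho_h|\leq Ch^2$ from \eqref{ph_esti}, conversion of the iterated integral into a bulk integral over thin tubes (the paper bounds these by $\|\nabla v\|^2_{L^2(R_T)}$ and invokes finite overlap of the $R_T$, which is the same bookkeeping). For \eqref{gah_ieq1} the paper simply cites (A.4)--(A.6) of \cite{burman2018Acut}, whereas you supply the standard derivation (triangle inequality plus the $1+O(h)$ surface Jacobian of $\mathbf{p}_h$); this is the argument contained in that reference, so the two treatments agree in substance.
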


We observe from the Cauchy-Schwarz inequality and the definition of $\mathbf{g}_h$ that 
\begin{equation*}
|({\rm III})_3|\leq \left\|[\![\sigma^\pm \mathbf{n}_h]\!]-[\![\sigma^\pm \mathbf{n}]\!]\circ\mathbf{p}_h\right\|_{L^2(\Gamma_h)}\| \mathbf{w}_h\|_{L^2(\Gamma_h)}.
\end{equation*}
Using (\ref{ass_Gamma_h}), Lemma~\ref{lem_gah_ineq}, the global trace inequality and (\ref{extension}) we have 
\begin{equation*}
\begin{aligned}
 \left\|[\![\sigma^\pm \mathbf{n}_h]\!]-[\![\sigma^\pm \mathbf{n}]\!]\circ\mathbf{p}_h\right\|_{L^2(\Gamma_h)}&\leq  \left\|[\![\sigma^\pm (\mathbf{n}_h-\mathbf{n})]\!]\right\|_{L^2(\Gamma_h)}+\left\|[\![\sigma^\pm \mathbf{n}]\!]-[\![\sigma^\pm \mathbf{n}]\!]\circ\mathbf{p}_h\right\|_{L^2(\Gamma_h)}\\
 &\leq Ch(\|\mathbf{u}\|_{H^2(\cup\Omega^\pm)}+\|p\|_{H^1(\cup\Omega^\pm)}).
 \end{aligned}
\end{equation*}
On the other hand, the triangle inequality gives 
$
 \| \mathbf{w}_h\|_{L^2(\Gamma_h)}\leq  \| I_h^B\mathbf{w}_h\|_{L^2(\Gamma_h)}+ \| \mathbf{w}_h-I_h^B\mathbf{w}_h\|_{L^2(\Gamma_h)}.
$
Since $I_h^B\mathbf{w}_h\in H_0^1(\Omega)^N$, it follows from Lemma~\ref{lem_gah_ineq}, the global trace inequality,  Poincar\'e's inequality and  (\ref{IFE_FE_er})  that 
\begin{equation*}
 \| I_h^B\mathbf{w}_h\|_{L^2(\Gamma_h)}\leq C\|I_h^B\mathbf{w}_h\|_{H^1(\Omega)}\leq C|I_h^B\mathbf{w}_h|_{H^1(\Omega)}\leq C\|\mathbf{w}_h\|_{1,h}.
\end{equation*}
By (\ref{IFE_FE_er}) again, we have
$
\| \mathbf{w}_h-I_h^B\mathbf{w}_h\|_{L^2(\Gamma_h)}\leq Ch^{1/2}\|\mathbf{w}_h\|_{1,h}.
$
Thus,
$
 \| \mathbf{w}_h\|_{L^2(\Gamma_h)}\leq C\|\mathbf{w}_h\|_{1,h}.
$
 Collecting the above inequalities yields 
$
|({\rm III})_3|\leq Ch(\|\mathbf{u}\|_{H^2(\cup\Omega^\pm)}+\|p\|_{H^1(\cup\Omega^\pm)})\|\mathbf{w}_h\|_{1,h}.
$
We now combine the bounds for Terms $({\rm III})_1$--$({\rm III})_3$ in  (\ref{consis_eq}) to obtain 
\begin{equation}\label{consis_ulti}
|\mathcal{A}_h(\mathbf{u}^{BK}-\mathbf{U}_h, p^{BK}-P_h; \mathbf{w}_h,r_h)|\leq Ch(\|\mathbf{u}\|_{H^2(\cup\Omega^\pm)}+\|p\|_{H^1(\cup\Omega^\pm)})\interleave(\mathbf{w}_h, r_h)\interleave.
\end{equation}

\subsection{A priori error estimates}
We note that the pressure component of $\Pi_{h}^{\Gamma}(\mathbf{u},p)$ and $\Pi_{h}^{\Gamma,J}(\mathbf{u},p)$ might not belong to $L^2_0(\Omega)$. To overcome this difficulty, we define $\Pi_{h,0}^{\Gamma}$ and $\Pi_{h,0}^{\Gamma,J}$ by
\begin{equation}
\Pi_{h,0}^{\Gamma}(\mathbf{u},p)=\Pi_{h}^{\Gamma}(\mathbf{u},p)-(\mathbf{0},c_h) \mbox{ and }  \Pi_{h,0}^{\Gamma,J}(\mathbf{u},p)=\Pi_{h}^{\Gamma}(\mathbf{u},p)+(\mathbf{u}_h^J,p_h^J-c_h-c_h^J),
\end{equation}
where $c_h=\int_{\Omega} \Pi^\Gamma_{\mathbf{u},p}p$ and $c_h^J=\int_{\Omega} p_h^J$. Recalling Remark~\ref{rema_dep}, it is not hard to verify that $\Pi_{h,0}^{\Gamma}(\mathbf{u},p)\in  \widetilde{\boldsymbol{V}Q_{h,0}}(\Omega)$, which will be useful in the proof of Theorem~\ref{theo_ulti_esti} (see (\ref{pro_l02})). Since $p\in L^2_0(\Omega)$, we have
\begin{equation*}
|c_h|=\left|\int_{\Omega} \Pi^\Gamma_{\mathbf{u},p}p+p_h^J-p-p_h^J\right|\leq C\| \Pi^\Gamma_{\mathbf{u},p}p+p_h^J-p\|_{L^2(\Omega)}+|c_h^J|
\end{equation*}
The first term can be bounded by Theorem~\ref{lem_interIFE_up2}. Since $p_h^J=0$ on all non-interface elements,  we use (\ref{ex_uphj}) and (\ref{avr_tac_ine}) to bound the second term
\begin{equation*}
\begin{aligned}
(c_h^J)^2&=\left(\int_{U(\Gamma,h)}p_h^J\right)^2\leq \|p_h^J\|^2_{L^2(U(\Gamma,h))}|U(\Gamma,h)|\leq Ch\sum_{T\in\mathcal{T}_h^\Gamma}\|p_h^J\|^2_{L^2(T)}\\
&\leq Ch\sum_{T\in\mathcal{T}_h^\Gamma}h_T^{N}|{\rm avg}_{\Gamma_{R_T}}(\mathbf{g})|^2 \leq Ch\sum_{T\in\mathcal{T}_h^\Gamma}(\|[\![\sigma^\pm\mathbf{n}]\!]\|^2_{L^2(R_T)}+h_T^2|\|[\![\sigma^\pm\mathbf{n}]\!]|^2_{H^1(R_T)})\\
&\leq Ch^2(\|\mathbf{u}\|^2_{H^2(\cup\Omega^\pm)}+\|p\|^2_{H^1(\cup\Omega^\pm)}),
\end{aligned}
\end{equation*}
where  in the last inequality we used the  finite overlapping of $R_T$, the fact $\bigcup_{T\in\mathcal{T}_h^\Gamma}R_T\subset U(\Gamma, Ch)$,  Lemma~\ref{lem_strip} with $\delta=Ch$, and the extension stability (\ref{extension}).
Collecting the above results yields 
\begin{equation}\label{est_ch}
\interleave(\mathbf{0},c_h)\interleave_{*}\leq C|c_h| \leq Ch(\|\mathbf{u}\|_{H^2(\cup\Omega^\pm)}+\|p\|_{H^1(\cup\Omega^\pm)}).
\end{equation}

The following lemma presents  an interpolation error estimate in terms of the  $\interleave\cdot\interleave_{*}$-norm.
\begin{lemma}\label{ener_app}
Let $(\mathbf{u}, p)$ be the solution of (\ref{weakform}) and suppose $(\mathbf{u},p)\in \widetilde{\boldsymbol{H}^2H^1}(\mathbf{g})$. Then there holds 
\begin{equation*}
\interleave(\mathbf{u}^{BK}, p^{BK})-\Pi_{h,0}^{\Gamma,J}(\mathbf{u},p)\interleave_{*} \leq Ch(\|\mathbf{u}\|_{H^2(\cup\Omega^\pm)}+\|p\|_{H^1(\cup\Omega^\pm)}).
\end{equation*}
\end{lemma}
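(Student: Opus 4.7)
The plan is to decompose the error via the triangle inequality as
\begin{equation*}
\interleave(\mathbf{u}^{BK},p^{BK})-\Pi_{h,0}^{\Gamma,J}(\mathbf{u},p)\interleave_* \le \interleave(\mathbf{u}^{BK},p^{BK})-\Pi_h^{\Gamma,J}(\mathbf{u},p)\interleave_* + \interleave(\mathbf{0},c_h+c_h^J)\interleave_*,
\end{equation*}
and dispose of the constant shift first. A direct computation gives $\interleave(\mathbf{0},c)\interleave_*^2\le Cc^2$ for any constant $c$, and the bounds $|c_h|,|c_h^J|\le Ch(\|\mathbf{u}\|_{H^2(\cup\Omega^\pm)}+\|p\|_{H^1(\cup\Omega^\pm)})$ were already derived in the paragraph leading to (\ref{est_ch}), so this piece contributes the target $O(h)$ quantity.

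Setting $(\mathbf{e},e_p):=(\mathbf{u}^{BK},p^{BK})-\Pi_h^{\Gamma,J}(\mathbf{u},p)$, I would split $\interleave(\mathbf{e},e_p)\interleave_*^2$ into a bulk part $\sum_T\|\sqrt{2\mu_h}\boldsymbol{\epsilon}(\mathbf{e})\|_{L^2(T)}^2+\|e_p\|_{L^2(\Omega)}^2$ and a face part consisting of the three sums over $F\in\mathcal{F}_h^\Gamma$ (the velocity jump, the normal-stress average, and the pressure average). The bulk part follows at once from Theorem~\ref{lem_interIFE_up2}, which already furnishes the correct $h$-power.

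The main work is the face part. The key observation is that on each element $T$, both $\mathbf{u}^{BK}$ and $\Pi_h^{\Gamma,J}(\mathbf{u},p)$ admit polynomial extensions to all of $T$ from either side of $\Gamma_h$: namely $\mathbf{u}_E^\pm$ on the first factor and, by Lemma~\ref{lem_IFEbasis} together with (\ref{ex_uphj}), an affine polynomial on the second factor. Hence each $\mathbf{e}^\pm$ lies in $H^2(T)$ and each $e_p^\pm$ in $H^1(T)$. On a face $F\subset\partial T$ with $F\in\mathcal{F}_h^\Gamma$, I would bound $\|v\|_{L^2(F)}^2\le\sum_{s=\pm}\|v^s\|_{L^2(F)}^2$ (using $F=(F\cap\Omega_h^+)\cup(F\cap\Omega_h^-)$) and apply the standard trace inequality $\|w\|_{L^2(F)}^2\le C(h_T^{-1}\|w\|_{L^2(T)}^2+h_T|w|_{H^1(T)}^2)$ separately to each polynomial extension. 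Lemma~\ref{lem_int_mid} then supplies the local $H^0$- and $H^1$-norms of $\mathbf{e}^\pm$ and $e_p^\pm$ with the required $h_T$-powers, and the identity $|\mathbf{e}^\pm|_{H^2(T)}=|\mathbf{u}_E^\pm|_{H^2(T)}$ (the IFE interpolant is affine) handles the lone $H^2$ quantity that arises from the trace of $\nabla\mathbf{e}^\pm$. The weights $h_F^{\pm 1}$ in the definitions of $\|\cdot\|_{*,h}$ and $\|\cdot\|_{*,pre}$ balance the scaling so that each face summand is of order $h_F^2\mathcal{L}_T(\mathbf{u},p)+h_F^2\sum_s(|\mathbf{u}_E^s|_{H^2(T)}^2+|p_E^s|_{H^1(T)}^2)$. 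Summing over $F\in\mathcal{F}_h^\Gamma$, using the finite overlap of the patches $R_T$ and $\omega_T$, and applying the extension stability (\ref{extension}), produces the desired $Ch^2(\|\mathbf{u}\|_{H^2(\cup\Omega^\pm)}^2+\|p\|_{H^1(\cup\Omega^\pm)}^2)$ bound.

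The hardest step is the normal-stress-average face term $\{2\mu_h\boldsymbol{\epsilon}(\mathbf{e})\mathbf{n}_F\}_F$: it is the only face contribution involving a first derivative of $\mathbf{e}$, so the trace inequality unavoidably brings in $|\mathbf{e}^\pm|_{H^2(T)}$, a quantity Lemma~\ref{lem_int_mid} does not control with an $h_T$-power. The resolution is precisely the observation above that the IFE piece contributes no second derivatives, collapsing this norm onto $|\mathbf{u}_E^\pm|_{H^2(T)}$, which combined with the $h_F$ prefactor in the flux-average term gives exactly the scaling needed. A minor bookkeeping point is that an interface face may be shared with a non-interface element, in which case Lemma~\ref{lem_int_mid} is replaced by the standard interpolation estimate (\ref{std_inter_est}), producing the same scaling.
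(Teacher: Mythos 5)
Your proposal is correct and follows essentially the same route as the paper's proof: peel off the constant shift by the triangle inequality and bound it via the estimates leading to (\ref{est_ch}), control the bulk terms by the interpolation results, and handle the interface-face terms by splitting into the $\pm$ extensions, applying the standard trace inequality elementwise, and invoking Lemma~\ref{lem_int_mid} together with the observation that the piecewise-affine IFE interpolant contributes no second derivatives, so the $H^2$ seminorm collapses onto $|\mathbf{u}_E^\pm|_{H^2(T)}$ --- exactly the one term the paper writes out in detail. The only cosmetic difference is that you carry $c_h+c_h^J$ explicitly in the triangle inequality where the paper displays only $c_h$; both constants are bounded by the same $O(h)$ quantity, so nothing changes.
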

\begin{proof}
It holds 
$
\interleave(\mathbf{u}^{BK}, p^{BK})-\Pi_{h,0}^{\Gamma,J}(\mathbf{u},p)\interleave_{*} \leq \interleave(\mathbf{u}^{BK}, p^{BK})-\Pi_{h}^{\Gamma,J}(\mathbf{u},p)\interleave_{*}+\interleave(\mathbf{0},c_h)\interleave_{*}.
$
In view of (\ref{est_ch}), it suffices to prove 
\begin{equation}\label{int_pro_01}
\interleave(\mathbf{u}^{BK}, p^{BK})-\Pi_{h}^{\Gamma,J}(\mathbf{u},p)\interleave_{*} \leq Ch(\|\mathbf{u}\|_{H^2(\cup\Omega^\pm)}+\|p\|_{H^1(\cup\Omega^\pm)}).
\end{equation}
First we have
$
\|\{ 2\mu_h\boldsymbol{\epsilon}(\mathbf{u}^{BK}-\Pi_{\mathbf{u}}^{\Gamma,J}\mathbf{u})\mathbf{n}_F\}_F \|_{L^2(F)}^2\leq \sum_{s=\pm}\|\{ 2\mu_h\boldsymbol{\epsilon}(\mathbf{u}_E^s-(\Pi_{\mathbf{u}}^{\Gamma,J}\mathbf{u})^s)\mathbf{n}_F\}_F \|_{L^2(F)}^2.
$
We then use the standard trace inequality and Lemma~\ref{lem_int_mid} to get
\begin{equation*}
\begin{aligned}
h_F\|\{ 2\mu_h\boldsymbol{\epsilon}(\mathbf{u}^{BK}-\Pi_{\mathbf{u}}^{\Gamma,J}\mathbf{u})\mathbf{n}_F\}_F \|_{L^2(F)}^2 &\leq C\sum_{s=\pm}\sum_{T\in\mathcal{T}_h^F}\left(|\mathbf{u}_E^s-(\Pi_{\mathbf{u}}^{\Gamma,J}\mathbf{u})^s|^2_{H^1(T)}+h_T^2|\mathbf{u}_E^s|^2_{H^2(T)}\right)\\
&\leq Ch^2\sum_{T\in\mathcal{T}_h^F}\sum_{s=\pm}(\|\mathbf{u}_E^s\|^2_{H^2(R_T)}+\|p_E^s\|^2_{H^1(\omega_T\cup R_T)}).
\end{aligned}
\end{equation*}
Other terms in the  $\interleave\cdot\interleave_{*}$-norm can be estimated similarly and the details are omitted.
The desired result (\ref{int_pro_01}) then follows from the finite overlapping of the sets $R_T\cup \omega_T$ and the extension stability (\ref{extension}).
 \end{proof}

With these preparations, we  are now ready to derive the error estimate of the proposed IFE method.
\begin{theorem}\label{theo_ulti_esti}
Let $(\mathbf{u}, p)$ and $(\mathbf{U}_h, P_h)$ be the solutions of (\ref{weakform}) and (\ref{IFE_method}), respectively. Suppose $(\mathbf{u}, p)\in  \widetilde{\boldsymbol{H}^2H^1}(\mathbf{g})$, then the following error estimate holds true:
\begin{equation}\label{ulti_esti}
\interleave (\mathbf{u}^{BK}, p^{BK})-(\mathbf{U}_h, P_h) \interleave_*\leq Ch(\|\mathbf{u}\|_{H^2(\cup \Omega^\pm)}+\|p\|_{H^1(\cup\Omega^\pm)}).
\end{equation}
\end{theorem}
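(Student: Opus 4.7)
The plan is to follow the classical Strang-type argument: split the error via the intermediate IFE interpolant $\Pi_{h,0}^{\Gamma,J}(\mathbf{u},p)$, then control the interpolation part by Lemma~\ref{ener_app} and the remaining discrete part by combining inf-sup stability, continuity, and consistency. Specifically, I would write
\begin{equation*}
\interleave(\mathbf{u}^{BK},p^{BK})-(\mathbf{U}_h,P_h)\interleave_*\leq \interleave(\mathbf{u}^{BK},p^{BK})-\Pi_{h,0}^{\Gamma,J}(\mathbf{u},p)\interleave_*+\interleave\Pi_{h,0}^{\Gamma,J}(\mathbf{u},p)-(\mathbf{U}_h,P_h)\interleave_*,
\end{equation*}
where the first summand is already bounded by $Ch(\|\mathbf{u}\|_{H^2(\cup\Omega^\pm)}+\|p\|_{H^1(\cup\Omega^\pm)})$ thanks to Lemma~\ref{ener_app}.

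For the second summand, the key observation (this is what equation (\ref{pro_l02}) presumably isolates) is that
\begin{equation*}
\mathbf{E}_h:=\Pi_{h,0}^{\Gamma,J}(\mathbf{u},p)-(\mathbf{U}_h,P_h)=\Pi_{h,0}^{\Gamma}(\mathbf{u},p)-(\mathbf{u}_h,p_h)\in\widetilde{\boldsymbol{V}Q_{h,0}}(\Omega),
\end{equation*}
because the correction terms $(\mathbf{u}_h^J, p_h^J - c_h^J)$ cancel and $\Pi_{h,0}^{\Gamma}(\mathbf{u},p)$ has been normalized so that its pressure component lies in $L_0^2(\Omega)$ (via the subtraction of $c_h=\int_{\Omega}\Pi_{\mathbf{u},p}^{\Gamma}p$). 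With $\mathbf{E}_h$ an admissible test function, I would apply the norm equivalence (\ref{norm_eq1}) followed by the inf-sup stability (\ref{inf_sup_ulti}) to get
\begin{equation*}
\interleave\mathbf{E}_h\interleave_*\leq C_0\interleave\mathbf{E}_h\interleave\leq \frac{C_0}{C_s}\sup_{(\mathbf{w}_h,r_h)\in\widetilde{\boldsymbol{V}Q_{h,0}}(\Omega)}\frac{\mathcal{A}_h(\mathbf{E}_h;\mathbf{w}_h,r_h)}{\interleave(\mathbf{w}_h,r_h)\interleave}.
\end{equation*}

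The numerator is then split as $\mathcal{A}_h(\mathbf{E}_h;\mathbf{w}_h,r_h)=\mathcal{A}_h(\Pi_{h,0}^{\Gamma,J}(\mathbf{u},p)-(\mathbf{u}^{BK},p^{BK});\mathbf{w}_h,r_h)+\mathcal{A}_h((\mathbf{u}^{BK},p^{BK})-(\mathbf{U}_h,P_h);\mathbf{w}_h,r_h)$. The first piece is bounded by the continuity (\ref{bd}) together with Lemma~\ref{ener_app}, and the second piece is exactly the quantity estimated by the consistency bound (\ref{consis_ulti}); both give $\leq Ch(\|\mathbf{u}\|_{H^2(\cup\Omega^\pm)}+\|p\|_{H^1(\cup\Omega^\pm)})\interleave(\mathbf{w}_h,r_h)\interleave_*$, and we convert $\interleave\cdot\interleave_*$ back to $\interleave\cdot\interleave$ using (\ref{norm_eq1}) once more. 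Combining the two summands finishes the proof.

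The main technical obstacle I expect is the bookkeeping around the constants $c_h$ and $c_h^J$: one must verify carefully that $\Pi_{h,0}^{\Gamma}(\mathbf{u},p)\in\widetilde{\boldsymbol{V}Q_{h,0}}(\Omega)$ (so that its difference with $(\mathbf{u}_h,p_h)$ is a legitimate test function in the inf-sup argument), and that the $(\mathbf{u}_h^J, p_h^J-c_h^J)$ corrections on both sides really cancel in $\mathbf{E}_h$. Everything else is the standard Strang framework: interpolation + inf-sup + continuity + consistency, all of which have already been established earlier in the section.
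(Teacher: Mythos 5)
Your proposal is correct and follows essentially the same route as the paper: the same triangle-inequality splitting through $\Pi_{h,0}^{\Gamma,J}(\mathbf{u},p)$, the same identification (\ref{pro_l02}) that the discrete part equals $\Pi_{h,0}^{\Gamma}(\mathbf{u},p)-(\mathbf{u}_h,p_h)\in\widetilde{\boldsymbol{V}Q_{h,0}}(\Omega)$, and the same combination of norm equivalence (\ref{norm_eq1}), inf-sup stability (\ref{inf_sup_ulti}), adding and subtracting $(\mathbf{u}^{BK},p^{BK})$, boundedness (\ref{bd}), consistency (\ref{consis_ulti}), and Lemma~\ref{ener_app}. The bookkeeping around $c_h$ and $c_h^J$ that you flag is indeed what the paper handles via the definitions of $\Pi_{h,0}^{\Gamma}$ and $\Pi_{h,0}^{\Gamma,J}$ and the estimate (\ref{est_ch}).
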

\begin{proof}
The triangle inequality gives
\begin{equation*}
\interleave (\mathbf{u}^{BK}, p^{BK})-(\mathbf{U}_h, P_h) \interleave_*\leq \interleave (\mathbf{u}^{BK}, p^{BK})-\Pi_{h,0}^{\Gamma,J}(\mathbf{u},p) \interleave_*+\interleave \Pi_{h,0}^{\Gamma,J}(\mathbf{u},p)-(\mathbf{U}_h, P_h) \interleave_*.
\end{equation*}
By definition, it is true that
\begin{equation}\label{pro_l02}
\Pi_{h,0}^{\Gamma,J}(\mathbf{u},p)-(\mathbf{U}_h, P_h)=\Pi_{h,0}^{\Gamma}(\mathbf{u},p)-(\mathbf{u}_h, p_h)\in \widetilde{\boldsymbol{V}Q_{h,0}}(\Omega).
\end{equation}

Now we can use  (\ref{norm_eq1}) and (\ref{inf_sup_ulti}) to get
\begin{equation*}
\interleave \Pi_{h,0}^{\Gamma,J}(\mathbf{u},p)-(\mathbf{U}_h, P_h) \interleave_* \leq  C_0C_s^{-1}\sup_{(\mathbf{w}_h, r_h)\in \widetilde{\boldsymbol{V}Q_{h,0}}(\Omega)}\frac{\mathcal{A}_h(\Pi_{h,0}^{\Gamma,J}(\mathbf{u},p)-(\mathbf{U}_h, P_h) ; \mathbf{w}_h, r_h)}{\interleave(\mathbf{w}_h, r_h)\interleave}.
\end{equation*}
Adding and subtracting $(\mathbf{u}^{BK}, p^{BK})$ and using (\ref{consis_ulti}), (\ref{bd}) and (\ref{norm_eq1}), we deduce that 
\begin{equation*}
\interleave \Pi_{h,0}^{\Gamma,J}(\mathbf{u},p)-(\mathbf{U}_h, P_h) \interleave_* \leq Ch(\|\mathbf{u}\|_{H^2(\cup\Omega^\pm)}+\|p\|_{H^1(\cup\Omega^\pm)})+C\interleave (\mathbf{u}^{BK}, p^{BK})-\Pi_{h,0}^{\Gamma,J}(\mathbf{u},p) \interleave_*
\end{equation*}
The desired result (\ref{ulti_esti}) now follows from the above inequalities and Lemma~\ref{ener_app}.
\end{proof}

\begin{remark}
Using (\ref{error_Ehbk}) and (\ref{ulti_esti}), we can  establish  the following error estimate:
\begin{equation*}
\|\mathbf{u}-\mathbf{U}_h\|_{1,h}+\|p-P_h\|_{L^2(\Omega)}\leq Ch(\|\mathbf{u}\|_{H^2(\cup \Omega^\pm)}+\|p\|_{H^1(\cup\Omega^\pm)}).
\end{equation*}

\end{remark}
\subsection{Condition number analysis}
In this subsection we show that the condition number of the stiffness matrix of our IFE method has the same bound as that of conventional mini element methods with the hidden constant independent of the interface. 
We assume the family of triangulations is also quasi-uniform. Define $\interleave (\mathbf{v}_h, q_h) \interleave_0^2=\|\mathbf{v}_h\|^2_{L^2(\Omega)}+\|q_h\|^2_{L^2(\Omega)}$ for all $(\mathbf{v}_h, q_h) \in \widetilde{\boldsymbol{V}Q_{0,h}}(\Omega)$. By (\ref{inver_IFE}), we have the inverse estimate 
\begin{equation}\label{con_pro_inver}
\interleave (\mathbf{v}_h, q_h) \interleave \leq Ch^{-1}\interleave (\mathbf{v}_h, q_h) \interleave_0 \quad \forall (\mathbf{v}_h, q_h) \in \widetilde{\boldsymbol{V}Q_{0,h}}(\Omega).
\end{equation}
Using (\ref{ineq_jup_v1}) and the Poincar\'e-Friedrichs  inequalities for broken functions (see \cite[Chapter 10.6]{brenner2008mathematical}), we have 
the following Poincar\'e type inequality:
\begin{equation}\label{con_pro_Poin}
\interleave (\mathbf{v}_h, q_h) \interleave_0\leq C\interleave (\mathbf{v}_h, q_h) \interleave\quad \forall (\mathbf{v}_h, q_h) \in \widetilde{\boldsymbol{V}Q_{0,h}}(\Omega).
\end{equation}
By Lemma~\ref{lem_IFEbasis} and (\ref{pro_ap_ber01}), we have on each $T\in\mathcal{T}_h^\Gamma$ that
\begin{equation}\label{pro_iieq1}
\|\mathbf{v}_h-I_h^B\mathbf{v}_h\|^2_{L^2(T)}+\|q_h-I_hq_h\|^2_{L^2(T)} \leq Ch_T^2|I_{h,T}\mathbf{v}_h|^2_{H^1(T)}\leq \|\mathbf{v}_h\|^2_{L^2(T)}.
\end{equation}
Similarly to the second inequality in (\ref{pro_app_ber02}), it holds $|I_{h,T}\mathbf{v}_h|^2_{H^1(T)}\leq |I_{h}^B\mathbf{v}_h|^2_{H^1(T)}$. Therefore, we have
\begin{equation}\label{pro_iieq2}
\|\mathbf{v}_h-I_h^B\mathbf{v}_h\|^2_{L^2(T)}+\|q_h-I_hq_h\|^2_{L^2(T)}  \leq Ch_T^2 |I_h^B\mathbf{v}_h|^2_{H^1(T)}\leq   C\|I_h^B\mathbf{v}_h\|^2_{L^2(T)},
\end{equation}
where we used the standard inverse inequality in the last inequality. Using (\ref{pro_iieq1}) and (\ref{pro_iieq2}) we obtain
\begin{equation}\label{con_equi01}
c\interleave (\mathbf{v}_h, q_h) \interleave_0\leq \interleave (I_h^B\mathbf{v}_h,I_hq_h) \interleave_0\leq C\interleave (\mathbf{v}_h, q_h) \interleave_0,
\end{equation}
where $C$ and $c$ are positive constants independent of $h$ and the interface. 
Let $X$ be the vector such that $(\mathbf{v}_h, q_h)=\sum_i X(i) \Psi_i$ with $\{\Psi_i\}$ being the standard finite element basis in $\widetilde{\boldsymbol{V}Q_{0,h}}(\Omega)$.
Since  $(I_h^B\mathbf{v}_h,I_hq_h)$ belongs to the conventional mini finite element
space, we have the following standard result:
$$
ch^{-N}\interleave (I_h^B\mathbf{v}_h,I_hq_h)  \interleave^2_0\leq |X|^2 \leq Ch^{-N}\interleave (I_h^B\mathbf{v}_h,I_hq_h)  \interleave^2_0,
$$
where $C$ and $c$ are positive constants independent of $h$ and the interface. Then, by  (\ref{con_equi01}) we get
\begin{equation}\label{con_ineq_max}
ch^{-N}\interleave (\mathbf{v}_h, q_h)  \interleave^2_0\leq |X|^2 \leq Ch^{-N}\interleave (\mathbf{v}_h, q_h)  \interleave^2_0.
\end{equation}

Combining (\ref{con_ineq_max}), the inverse estimate (\ref{con_pro_inver}) and the Poincar\'e type inequality (\ref{con_pro_Poin}), we can use the approach in \cite[p. 109]{hansbo2014cut} to derive the bound:
$
\kappa(A_h)\leq Ch^{-2},
$
where $A_h$ is the stiffness matrix associated with $\mathcal{A}_h$ and $\kappa(A_h)$ is the the spectral condition number of $A_h$.
\section{Numerical results}\label{sec_num}
\subsection{2D numerical examples}
We first test two 2D numerical examples from \cite{caceres2020new}. Let $\Omega=(-1,1)^2$, $\Omega^-=\{\mathbf{x}\in\mathbb{R}^2 :|\mathbf{x}|< R\}$, $\Omega^+=\Omega\backslash \Omega^-$ and $\Gamma=\{\mathbf{x}\in\mathbb{R}^2 :|\mathbf{x}|=R\}$ with $R=1/\sqrt{\pi}$. We test our IFE method with $\gamma=-1$ and $\eta=0$ on uniform Cartesian meshes consisting of $2M\times M$ congruent triangles. The errors are denoted by 
$e_0(\mathbf{u}):= \|\mathbf{u}^{BK}-\mathbf{U}_h\|_{L^2(\Omega)}$,  $e_1(\mathbf{u}):= \|\mathbf{u}^{BK}-\mathbf{U}_h\|_{1,h}$ and $e_0(p):=\|p^{BK}-P_h\|_{L^2(\Omega)}.$
\textbf{Example 1} ($\mathbf{g}=\mathbf{0}$). The exact solutions are chosen as, for all $\mathbf{x}:=(x_1,x_2)^T\in\Omega$,
\begin{equation*}
\mathbf{u}^\pm(\mathbf{x})=\frac{r_0^2-|\mathbf{x}|^2}{\mu^\pm}\left(
\begin{array}{c}
-x_2\\
x_1\\
\end{array}
\right),
\quad p(\mathbf{x})=x_2^2-x_1^2.
\end{equation*}
It is easy to verify $\mathbf{g}:=[\sigma(\mu,\mathbf{u},p)\mathbf{n}]_\Gamma=\mathbf{0}$.
We consider three cases: $\mu^+=5, \mu^-=1$; $\mu^+=1000, \mu^-=1$; $\mu^+=1, \mu^-=1000$. The numerical results presented in Figure~\ref{ex1_fig} showing the optimal convergence of the proposed IFE method.
\begin{figure} [htbp]
\centering
\subfigure[$\mu^+=5, \mu^-=1$ ]{
\includegraphics[width=0.3\textwidth]{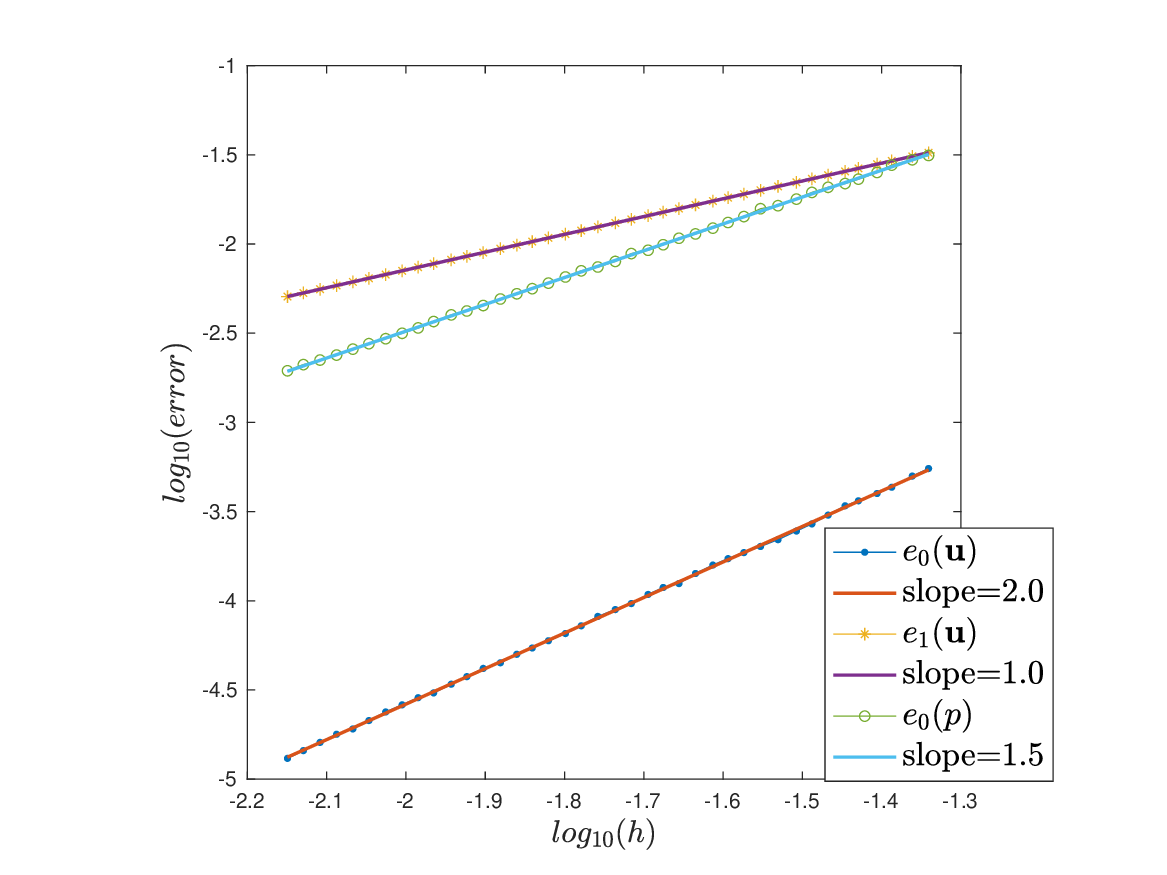}}
\subfigure[$\mu^+=1000, \mu^-=1$]{
\includegraphics[width=0.3\textwidth]{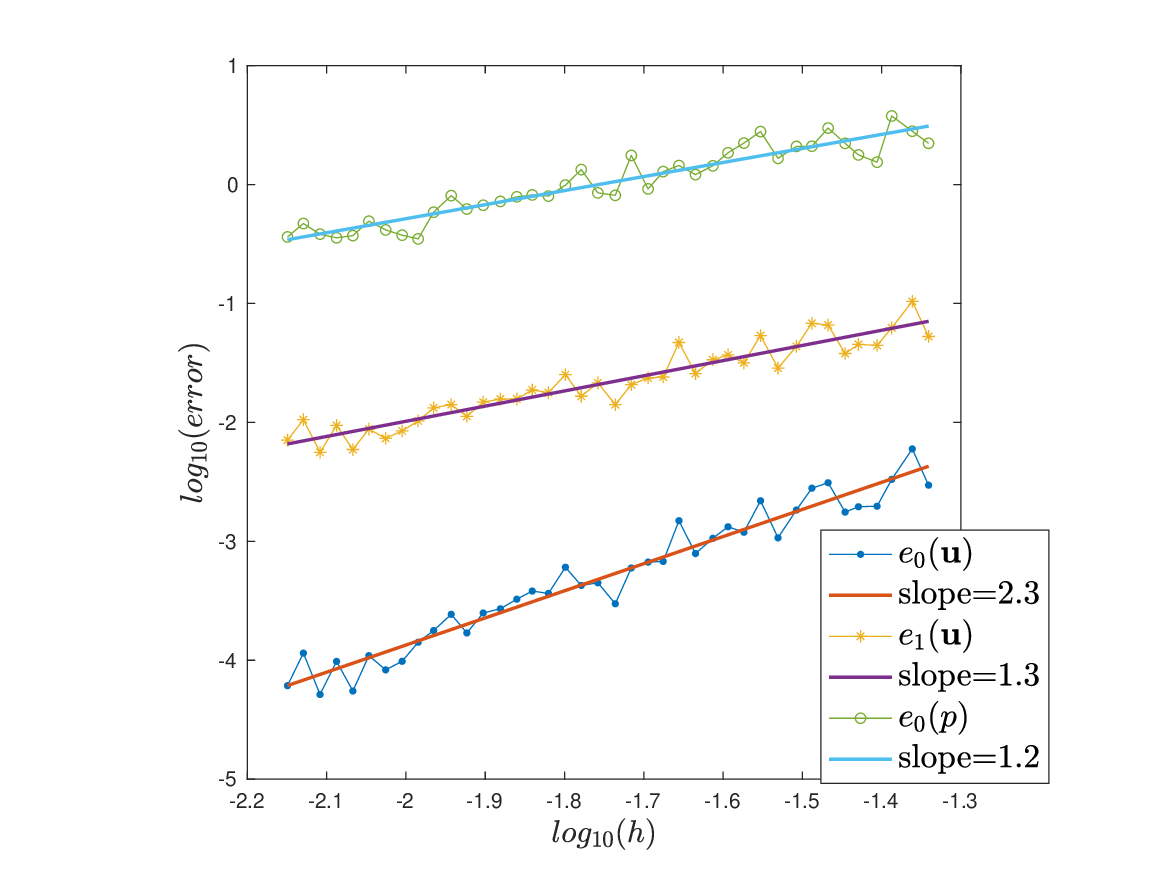}}
\subfigure[$\mu^+=1, \mu^-=1000$]{
\includegraphics[width=0.3\textwidth]{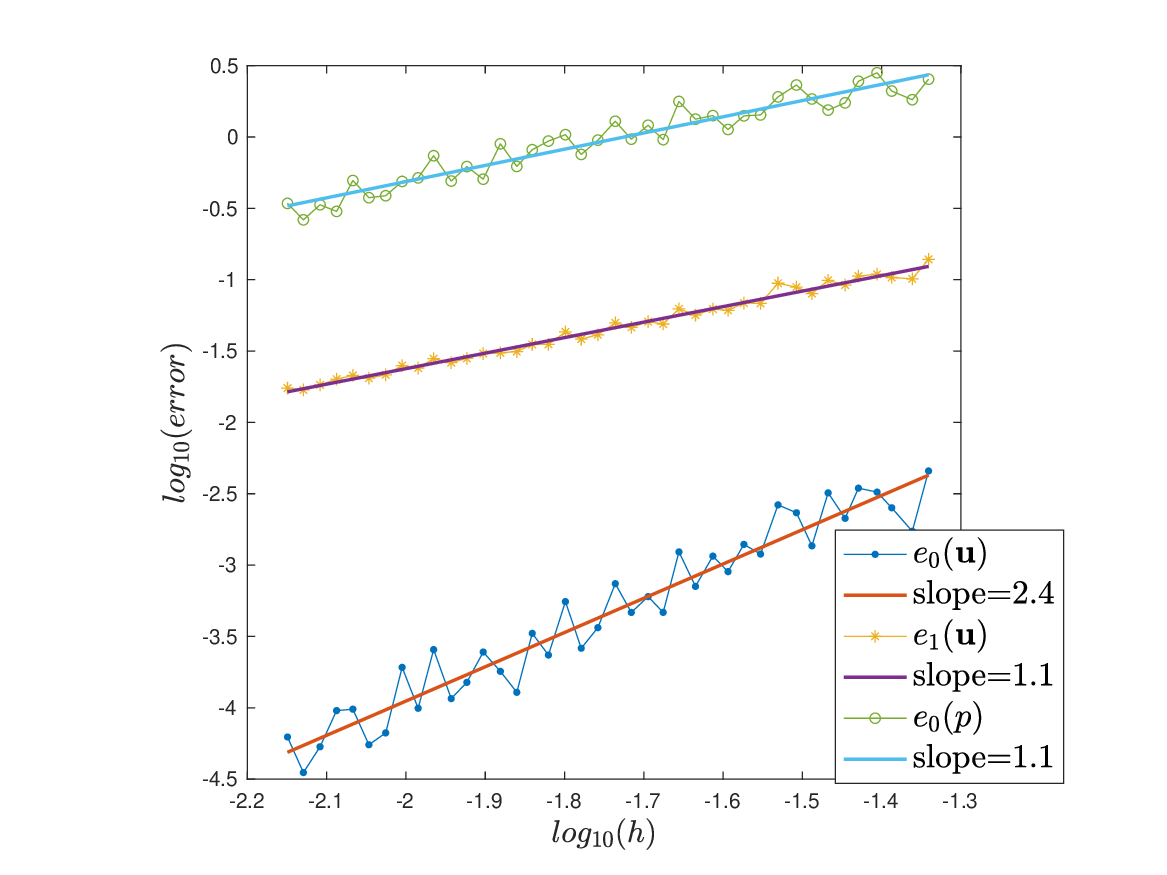}}
 \caption{Plots of $\log_{10}(error)$ versus $\log_{10} (h)$ for Example 1. The linear regression analysis is used to find an approximate order of convergence.\label{ex1_fig}} 
\end{figure}

\textbf{Example 2 }($\mathbf{g}\not=\mathbf{0}$). We consider the parameters $\mu^+=2, \mu^-=0.5$ and the exact solutions  given by
\begin{equation*}
\mathbf{u}(\mathbf{x})=\frac{1}{\pi}
\left(\begin{array}{c}
\sin \pi x_1 \sin \pi x_2\\
\cos \pi x_1 \cos\pi x_2\\
\end{array}
\right),
~~ p^+(\mathbf{x})=-\frac{1}{6\pi},~~p^-(\mathbf{x})=x_1^2+x_2^2.
\end{equation*}
In this example  $\mathbf{g}:=[\sigma(\mu,\mathbf{u},p)\mathbf{n}]_\Gamma\not=\mathbf{0}$ and $[p]_\Gamma\not=0$. We report the results in Table~\ref{ex2_biao} which again show the optimal convergence.
\begin{table}[H]
\caption{Errors and orders for Example 2.\label{ex2_biao}}
\begin{center}
{\small
\begin{tabular}{c|c c|c c|c c}
  \hline
       $M$  &  $e_0(\mathbf{u})$  &  rate   &  $e_1(\mathbf{u})$  &  rate  &  $e_0(p)$  &  rate  \\ \hline
        16   &   2.164E-02    &         &    3.903E-01    &        &    7.983E-01     &          \\ \hline
        32    &  5.280E-03   &   2.04   &   1.903E-01    &  1.04   &   2.213E-01     & 1.85 \\ \hline
        64    &  1.320E-03    &  2.00    &  9.442E-02    &  1.01   &   6.647E-02     & 1.74 \\ \hline
       128   &   3.258E-04   &   2.02    &  4.701E-02   &   1.01   &   2.106E-02   &   1.66 \\ \hline
       256    &  8.170E-05   &   2.00     & 2.347E-02  &    1.00    &  6.989E-03  &    1.59 \\ \hline
\end{tabular}
}
\end{center}
\end{table}
\subsection{A 3D numerical example}
\textbf{Example 3}. This example is taken from \cite{kirchhart2016analysis}. We consider the domain $\Omega=(-1,1)^3$, the parameters $\mu^+=2$, $\mu^-=0.5$, $R=2/3$, and  the exact solution given by
\begin{equation*}
\begin{aligned}
&\mathbf{u}(\mathbf{x})=\alpha(r)e^{-r^2}
\left(\begin{array}{c}
-x_2\\
x_1\\
0
\end{array}
\right),~~
 \alpha(r)=\left\{
\begin{array}{ll}
1/\mu^-&r<R,\\
\mu^++(1/\mu^--1/\mu^+)e^{r^2-R^2}&r\geq R,
\end{array}\right. \\
&p^+(\mathbf{x})=x_1^3-c,~~p^-(\mathbf{x})=x_1^3+10-c,
\end{aligned}
\end{equation*}
where $\mathbf{x}=(x_1,x_2,x_3)^T$ and $r=\sqrt{x_1^2+x_2^2+x_3^2}$. We set $c=5\pi R^3/3$ so that $\int_{\Omega}p=0$.  

In this example, we first test the mini IFE method  ($\gamma=-1$, $\eta=0$) and the conventional mini element method using uniform Cartesian meshes, which are obtained by first partitioning $\Omega$ into $M^3$ cubes and then subdividing each cube into six tetrahedra. The number of the elements is denoted by $\#Elem$.
Numerical results reported in Table~\ref{ex3_biao} clearly show optimal convergence for our IFE method and suboptimal convergence for the conventional mini element method.  
In Figure~\ref{ex3_fig}, we also show plots of the approximate pressures obtained by these two numerical methods on the mesh with $M=32$ . We observe that the IFE method reduces the non-physical oscillations near the interface substantially.
Then, we test the conventional mini element method on a sequence of interface-fitted meshes, obtained using Gmsh \cite{geuzaine2009gmsh} with $h=0.7, 0.35, 0.175$. Numerical results, as shown in Table~\ref{ex32_biao}, indicate suboptimal convergence rates for both velocity and pressure. Note that in this example, $\mathbf{g}\not=\mathbf{0}$, and thus, $[p]_\Gamma\not=0$.  Although interface-fitted meshes are used,  the conventional mini element method cannot achieve  optimal convergence.

\begin{table}[H]
\caption{Errors and orders obtained by two finite element methods using unfitted meshes for Example 3.\label{ex3_biao}}
\begin{center}
{\small
\begin{tabular}{|c|c c|c c|c c|}
  \hline
          \multicolumn{7}{|c|}{Conventional Mini Element Using Unfitted Meshes}  \\ \hline
       $M~~(\#Elem)$  &  $e_0(\mathbf{u})$  &  rate   &  $e_1(\mathbf{u})$  &  rate  &  $e_0(p)$  &  rate  \\ \hline
         4~(384)    &  1.309E-01    &             & 1.043E+00    &           &    5.768E+00  &            \\ \hline
         8~(3072)    & 6.405E-02    &  1.03    &  8.728E-01    &  0.26   &   3.837E+00   &   0.59 \\ \hline
        16~(24576)  &  2.834E-02   &   1.18    &  7.275E-01   &   0.26  &    2.981E+00  &    0.36 \\ \hline
        32~(196608)  &  1.260E-02  &    1.17    &  5.176E-01   &   0.49   &   2.012E+00   &   0.57 \\ \hline
                \multicolumn{7}{|c|}{Mini IFE }  \\ \hline
                  $M~~(\#Elem)$  &  $e_0(\mathbf{u})$  &  rate   &  $e_1(\mathbf{u})$  &  rate  &  $e_0(p)$  &  rate  \\ \hline
           4~(384)    &   3.911E-01 &               & 4.083E+00   &              & 1.475E+01    &            \\ \hline
         8~(3072)   &  7.283E-02   &   2.42    &  8.829E-01   &   2.21     & 2.107E+00   &   2.81 \\ \hline
        16~(24576)   &  1.628E-02   &   2.16  &    2.992E-01  &    1.56    &  4.003E-01   &   2.40 \\ \hline
        32~(196608)    &  4.216E-03    &  1.95 &     1.256E-01   &   1.25   &   1.143E-01   &   1.81 \\ \hline
\end{tabular}
}
\end{center}
\end{table}

\begin{table}[H]
\caption{Errors and orders obtained by conventional mini element  methods using fitted meshes for Example 3.\label{ex32_biao}}
\begin{center}
{\small
\begin{tabular}{|c|c c|c c|c c|}
  \hline
        \multicolumn{7}{|c|}{Conventional Mini Element Using Fitted Meshes}  \\ \hline
       $h ~~(\#Elem)$  &  $e_0(\mathbf{u})$  &  rate   &  $e_1(\mathbf{u})$  &  rate  &  $e_0(p)$  &  rate  \\ \hline
      0.7 ~(3495)    &   6.251E-02   &          &   1.111E+00  &           &  6.276E+00   &       \\ \hline
     0.35~ (23356)    &  2.369E-02    &  1.40   &   9.203E-01   &   0.27    &  4.142E+00    &  0.60\\ \hline
    0.175 ~(164479)    &  8.762E-03   &   1.43  &    6.822E-01  &    0.43   &   2.956E+00    &  0.49\\ \hline
\end{tabular}
}
\end{center}
\end{table}

\begin{figure} [htbp]
\centering
\subfigure{
\includegraphics[width=0.4\textwidth]{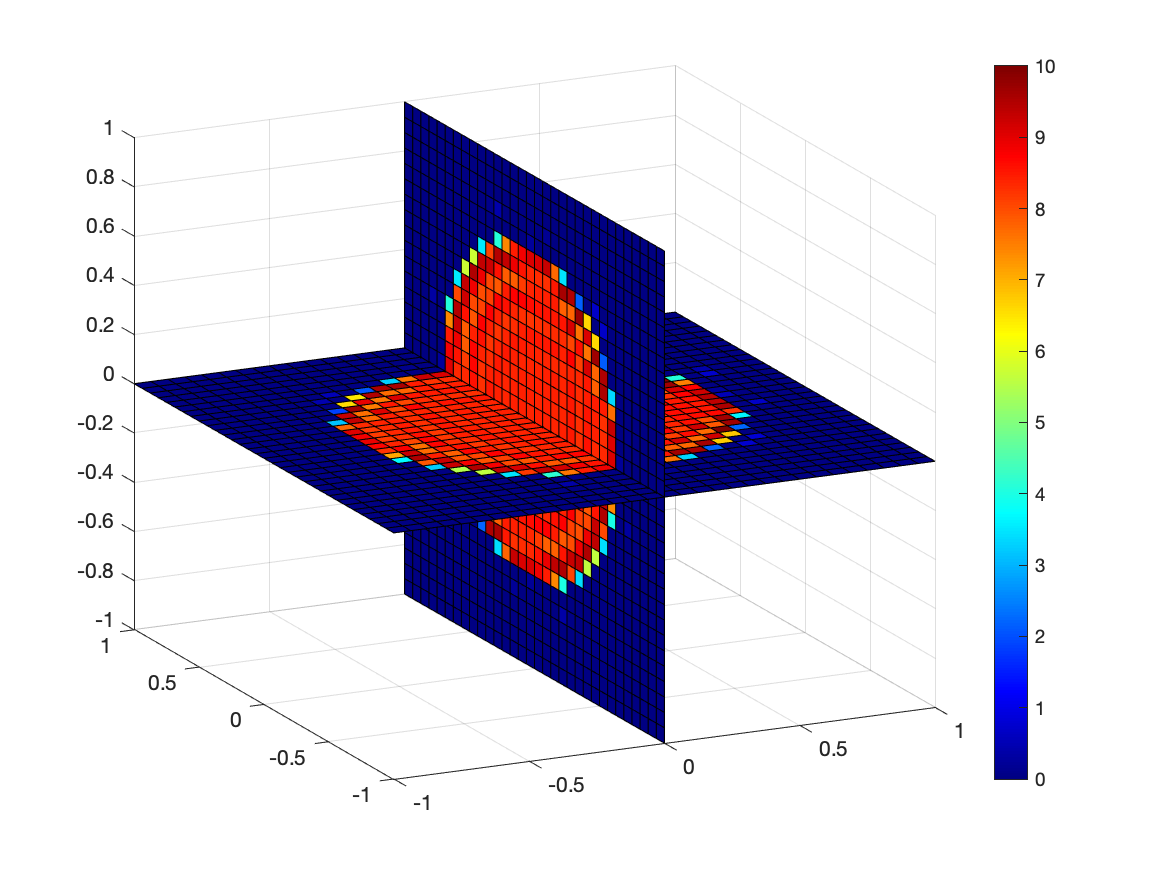}}
\subfigure{
\includegraphics[width=0.4\textwidth]{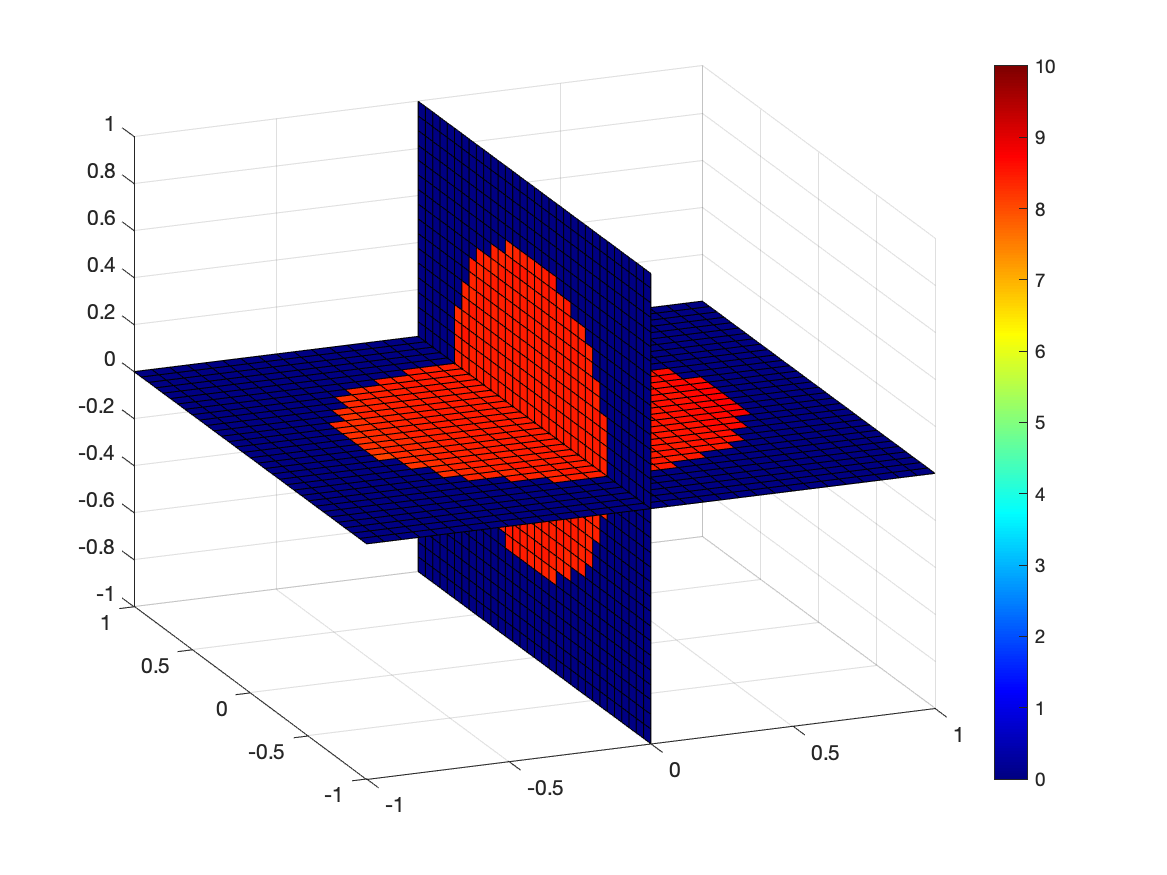}}
 \caption{Plots of approximate pressures obtained by the conventional mini element method (\emph{left}) and the mini IFE method (\emph{right}) for Example 3, using the uniform Cartesian mesh with $M=32$. \label{ex3_fig}} 
\end{figure}

\section*{Acknowledgements}
The authors would like to thank the anonymous referees sincerely for their careful reading and helpful suggestions that improved the quality of this paper.
H. Ji's work was partially supported by the NSFC grant 12371370 and Ministry of Education Key Laboratory of NSLSCS key project  202402.
D. Liang's work was partially supported by Natural Sciences and Engineering Research Council of Canada.
Q. Zhang's work was partially supported by the NSFC grant 12101327 and the Supporting Project of National Natural Science Youth Fund of Nanjing University of Chinese Medicine  (No. XPT12101327).

\appendix 
\section{Technical results}
\subsection{Proof of (\ref{res_ap2}) and (\ref{res_ap3})\label{sec_app_a}}
We construct a new box $R^\prime_T$  centered at $\mathbf{x}_T^c$ with diameter $\rho_T$, where $\mathbf{x}_T^c$ and $\rho_T$  are the center and  diameter of the largest ball inscribed in $T$. We assume that each face of $R^\prime_T$ is parallel to a corresponding face of $R_T$. One can easily obtain  $R^\prime_T\subset T\subset R_T$ and
$
(R_T-\mathbf{x}_T^*)/(2\sqrt{N}h_T)=(R^\prime_T-\mathbf{x}_T^c)/\rho_T.
$
Thus, it holds 
$R^\prime_T=\lambda(R_T-\mathbf{x}_0)+\mathbf{x}_0$ with $\lambda=\rho_T/(2\sqrt{N}h_T)$ and $\mathbf{x}_0=(\mathbf{x}_T^c-\lambda\mathbf{x}_T^*)/(1-\lambda)$. This implies that $R^\prime_T$ is  a homothetic image of $R_T$ (see \cite[Section 2.3]{2016High} for the definition). From the mesh regularity,  we know that the scaling factor $\lambda$ has lower and upper bounds independent of the mesh size and the interface location. Therefore, using Lemma 2.2 in \cite{2016High} we have the norm-equivalence property for polynomials:
\begin{equation}\label{poly_equ_app}
\|w\|_{L^2(R_T)}\leq C\|w\|_{L^2(R^\prime_T)}\quad \forall w\in P_k(R_T).
\end{equation}
For the box $R_T$, there exists a polynomial $v_h\in P_1(R_T)$ such that 
$
|v-v_h|_{H^m(R_T)}\leq Ch_T^{l-m}|v|_{H^l(R_T)},~0\leq m\leq l\leq 2$ (see \cite{brenner2008mathematical}).
Recalling that $\widetilde{I}_{h,T}v$ is a polynomial defined on $\mathbb{R}^N$, we deduce that 
\begin{equation*}
\begin{aligned}
|v-\widetilde{I}_{h,T}v|_{H^m(R_T)}&\leq |v-v_h|_{H^m(R_T)}+|v_h-\widetilde{I}_{h,T}v|_{H^m(R_T)}\\
&\leq  |v-v_h|_{H^m(R_T)}+C|v_h-\widetilde{I}_{h,T}v|_{H^m(R^\prime_T)}\\
&\leq  |v-v_h|_{H^m(R_T)}+C|v_h-\widetilde{I}_{h,T}v|_{H^m(T)}\\
&\leq  C|v-v_h|_{H^m(R_T)}+C|v-\widetilde{I}_{h,T}v|_{H^m(T)}\leq Ch_T^{l-m}|v|_{H^l(\omega_T\cup R_T)},
\end{aligned}
\end{equation*}
which completes the proof of (\ref{res_ap2}). The proof of (\ref{res_ap3}) is analogous.

\subsection{Proof of Lemma~\ref{lem_trac_IFE}\label{sec_app_b}}
We first prove the following useful result:
\begin{equation}\label{pro_app_l2}
|\mathbf{v}_h^\pm|_{H^m(T)} \leq C|\mathbf{v}_h|_{H^m(T)},\quad m=0,1.
\end{equation}
We have 
$\mathbf{v}_h^\pm=\mathbf{v}_b+\mathbf{v}_L^\pm$ with $(\mathbf{v}_L,q_h)\in\widetilde{\boldsymbol{P}_1P_1}(T)$ and $\mathbf{v}_b\in \mbox{span}\{b_T\}^N$. Since $(\mathbf{v}_L,q_h)$ satisfies (\ref{dis_jp0}), in view of the $\mathbf{t}_{i,h}$-$\mathbf{n}_h$ coordinate system, it is not hard to see
\begin{equation*}
\partial_{\boldsymbol{\nu}_{i,h}} (\mathbf{v}_L^+\cdot\boldsymbol{\nu}_{j,h})=\sum_{k}\sum_{l}c_{ijkl}\partial_{\boldsymbol{\nu}_{k,h}} (\mathbf{v}_L^-\cdot\boldsymbol{\nu}_{l,h})~ \mbox{ with }~ |c_{ijkl}|\leq C,
\end{equation*}
where $\boldsymbol{\nu}_{i,h}:=\mathbf{t}_{i,h}$, $i=1,...N-1$ and $\boldsymbol{\nu}_{N,h}:=\mathbf{n}_{h}$. 
This  implies 
$|\nabla \mathbf{v}_L^+ |\leq C|\nabla\mathbf{v}_L^-|$ and similarly, $|\nabla \mathbf{v}_L^- |\leq C|\nabla\mathbf{v}_L^+|$.

By (\ref{dis_jp2}), we have $[\![\mathbf{v}_L^\pm]\!]|_{\Gamma_{h,T}}=\mathbf{0}$, and then 
$
[\![\mathbf{v}_L^\pm]\!]|_{T_h^\pm}=\pm\mbox{dist}(\mathbf{x},\Gamma_{h,T}^{ext}) [\![\nabla\mathbf{v}_L^\pm\mathbf{n}_h]\!]. 
$
Combining these results with the fact $\|\mbox{dist}(\mathbf{x},\Gamma_{h,T}^{ext})\|_{L^\infty(T)}\leq Ch_T$ yields 
$$
\|\mathbf{v}_h^+\|^2_{L^2(T_h^-)}\leq C\|\mathbf{v}_h^-\|^2_{L^2(T_h^-)}+Ch_T^{2}|T_h^-||B|^{-1}\|\nabla \mathbf{v}_L^{s_0}\|^2_{L^2(B)},
$$
where  $s_0$= $+$ or $-$ and $B$ is a ball. The superscript $s_0$ and the ball $B$ are chosen such that $B\subset T_h^{s_0}$ and $|B|\geq Ch_T^N$ (see \cite[Lemma 5.8]{ji3Dnonconforming} for the construction of $B$). Since $\mathbf{v}_b|_{\partial T}=\mathbf{0}$ and $\mathbf{v}_L^{s_0}$ is linear, we have  $\int_{T} \nabla \mathbf{v}_L^{s_0}\cdot  \nabla \mathbf{v}_b=0$, which implies 
\begin{equation}\label{pro_app_ber02}
\|\nabla \mathbf{v}_L^{s_0}\|^2_{L^2(B)}\leq \|\nabla \mathbf{v}_L^{s_0}\|^2_{L^2(T)}\leq \|\nabla (\mathbf{v}_L^{s_0}+\mathbf{v}_b)\|^2_{L^2(T)}.
\end{equation}
Similarly to the proof in Appendix~\ref{sec_app_a}, we now need another ball $B^\prime$ with diameter $2h$ centered at the same point as $B$. Clearly, $T\subset B^\prime$ and $B^\prime=\lambda B$ with the scaling factor $2\leq \lambda\leq C$. Keeping in mind that $\mathbf{v}_L^{s_0}$ and $\mathbf{v}_b$ are viewed as polynomials defined in the whole space $\mathbb{R}^N$, we apply  the norm-equivalence property for polynomials as (\ref{poly_equ_app}) to obtain 
\begin{equation*}
\|\nabla \mathbf{v}_L^{s_0}\|^2_{L^2(B)} \leq \|\nabla (\mathbf{v}_L^{s_0}+\mathbf{v}_b)\|^2_{L^2(B^\prime)}\leq C\|\nabla (\mathbf{v}_L^{s_0}+\mathbf{v}_b)\|^2_{L^2(B)}=C\|\nabla \mathbf{v}_h^{s_0}\|^2_{L^2(B)}.
\end{equation*}
Collecting above results and using the standard inverse inequality on $B$, we have  
\begin{equation*}
\begin{aligned}
\|\mathbf{v}_h^+\|^2_{L^2(T_h)}\leq \|\mathbf{v}_h^+\|^2_{L^2(T_h^+)}+C\|\mathbf{v}_h^-\|^2_{L^2(T_h^-)}+C\| \mathbf{v}_h^{s_0}\|^2_{L^2(B)}\leq C\|\mathbf{v}_h\|^2_{L^2(T)},
\end{aligned}
\end{equation*}
which together with a similar result for $\mathbf{v}_h^-$ completes the proof of (\ref{pro_app_l2}) with $m=0$. The proof of (\ref{pro_app_l2}) with $m=1$ is analogous and so is omitted for brevity.

With the help of  (\ref{pro_app_l2}), the desired inequalities (\ref{trac_IFE_inequality1}) and (\ref{inver_IFE})  can be obtained  by
\begin{equation*}
\begin{aligned}
&\|\nabla \mathbf{v}_h\|_{L^2(\partial T)}\leq C\sum_{s=\pm}\|\nabla \mathbf{v}_h^s\|_{L^2(\partial T)}\leq \sum_{s=\pm}Ch_T^{-1/2}\|\nabla \mathbf{v}_h^s \|_{L^2(T)}\leq Ch_T^{-1/2}\|\nabla \mathbf{v}_h \|_{L^2(T)},\\
&|\mathbf{v}_h|_{H^1(T)} \leq C\sum_{s=\pm}|\mathbf{v}_h^s|_{H^1(T)}\leq \sum_{s=\pm}Ch_T^{-1}\| \mathbf{v}_h^s \|_{L^2(T)}\leq Ch_T^{-1}\| \mathbf{v}_h \|_{L^2(T)},
\end{aligned}
\end{equation*}
where we used the standard trace and inverse inequality on $T$ since $\mathbf{v}_h^\pm$ are polynomials.
As for (\ref{IFE_inequ_v}), we first use a similar argument to get 
\begin{equation}\label{pro_ap_ber01}
|I_{h,T}\mathbf{v}_h|_{H^1(T)}  \leq Ch_T^{N/2-1}\sum_{s=\pm}\|\mathbf{v}_h^s\|_{L^\infty(T)}\leq Ch_T^{-1}\sum_{s=\pm}\|\mathbf{v}_h^s\|_{L^2(T)}\leq Ch_T^{-1}\|\mathbf{v}_h\|_{L^2(T)}.
\end{equation}
Then the desired inequality (\ref{IFE_inequ_v}) can be obtained by 
\begin{equation*}
|I_{h,T}\mathbf{v}_h|_{H^1(T)} =|I_{h,T}(\mathbf{v}_h+\mathbf{c})|_{H^1(T)}\leq Ch_T^{-1}\|\mathbf{v}_h+\mathbf{c}\|_{L^2(T)}\leq C|\mathbf{v}_h|_{H^1(T)},
\end{equation*}
where the constant $\mathbf{c}$ is chosen as $\mathbf{c}=|T|^{-1}\int_T\mathbf{v}_h$.
By Lemma \ref{lem_IFEbasis}, it is easy to verify 
\begin{equation}\label{pro_jp}
|\mathbf{v}_h-I_{h}^B\mathbf{v}_h|\leq Ch_T|\nabla I_{h,T}\mathbf{v}_h|,~ |\nabla (\mathbf{v}_h-I_{h}^B\mathbf{v}_h)|\leq C|I_{h,T}\nabla\mathbf{v}_h|,~|[\![q_h^\pm]\!]|\leq C|I_{h,T}\nabla\mathbf{v}_h|,
\end{equation}
which together with (\ref{IFE_inequ_v})  yields (\ref{IFE_FE_er}) and (\ref{trac_IFE_inequality_qj}).
We also have $|q_h^\pm | \leq |q_h^\mp|+C|\nabla I_{h,T}\mathbf{v}_h|$. Thus,
\begin{equation*}
\|q_h^\pm\|_{L^2(T_h^\mp)}\leq \|q_h^\mp\|_{L^2(T_h^\mp)}+C|I_{h,T}\mathbf{v}_h|_{H^1(T)}\leq \|q_h\|_{L^2(T)}+C|\mathbf{v}_h|_{H^1(T)}.
\end{equation*}
Now (\ref{trac_IFE_inequality2}) is obtain by the above inequality  and
\begin{equation*}
\|q_h\|_{L^2(\partial T)}\leq C\sum_{s=\pm}\|q_h^s\|_{L^2(\partial T)}\leq Ch_T^{-1/2}\sum_{s=\pm}\|q_h^s\|_{L^2(T)}.
\end{equation*}
Finally, we consider (\ref{ineq_jup_v1}). 
Observing that $I_hq_h$ and $I_h^B\mathbf{v}_h$ are continuous, we have 
\begin{equation*}
\|[q_h]_F\|_{L^2(F)}= \|[q_h-I_hq_h]_F\|_{L^2(F)},\quad \|[\mathbf{v}_h]_{F}\|_{L^2(F)}=\|[\mathbf{v}_h-I_h^B\mathbf{v}]_{F}\|_{L^2(F)}.
\end{equation*}
By the first inequality of (\ref{pro_jp}) and (\ref{IFE_inequ_v}) we get
\begin{equation*}
h_F^{-1}\|\mathbf{v}_h-I^B_{h,T}\mathbf{v}_h\|^2_{L^2(F)} \leq Ch_F^{-1}|F|h_T^2|\nabla I_{h,T}\mathbf{v}_h|^2\leq C|I_{h,T}\mathbf{v}_h|^2_{H^1(T)}\leq C|\mathbf{v}_h|^2_{H^1(T)}.
\end{equation*}
By Lemma \ref{lem_IFEbasis}, we have  $|q_h-I_{h,T}q_h| \leq C|\nabla I_{h,T}\mathbf{v}_h|$, and then
\begin{equation*}
h_F\|q_h-I_{h,T}q_h\|^2_{L^2(F)} \leq Ch_F|F||\nabla I_{h,T}\mathbf{v}_h|^2\leq C|I_{h,T}\mathbf{v}_h|^2_{H^1(T)}\leq C|\mathbf{v}_h|^2_{H^1(T)}.
\end{equation*}
The desired inequality  (\ref{ineq_jup_v1})  follows from the above results and the triangle inequality.

\subsection{Proof of Lemma~\ref{lem_gah_ineq}\label{sec_app_c}}
See (A.4)-(A.6) in \cite{burman2018Acut} for the proof of (\ref{gah_ieq1}). In the following, we prove (\ref{gah_ieq2}) through a similar argument. For each $\mathbf{x}\in\Gamma_{h,T}$, $T\in\mathcal{T}_h^\Gamma$, using the representation
\begin{equation*}
v(\mathbf{x})=v(\mathbf{p}_h(\mathbf{x}))+\int_0^1\nabla v(s\mathbf{x}+(1-s)\mathbf{p}_h(\mathbf{x}))\cdot(\mathbf{x}-\mathbf{p}_h(\mathbf{x}))ds,
\end{equation*}
 the Cauchy-Schwarz inequality and (\ref{ph_esti}), we obtain 
\begin{equation*}
|v(\mathbf{x})-v(\mathbf{p}_h(\mathbf{x}))|^2\leq Ch_T^2\|\nabla v\|^2_{L^2(I_{\mathbf{x},\mathbf{p}_h(\mathbf{x})})},
\end{equation*}
where $I_{\mathbf{x},\mathbf{p}_h(\mathbf{x})}$ is the line segment between $\mathbf{x}$ and $\mathbf{p}_h(\mathbf{x})$. Integrating over $\Gamma_{h,T}$ yields
\begin{equation*}
\|v-v\circ\mathbf{p}_h\|_{L^2(\Gamma_{h,T})}^2\leq Ch_T^2\|\nabla v\|^2_{L^2(R_T)}.
\end{equation*}
The desired result (\ref{gah_ieq2}) follows from the above inequality and the finite overlapping of $R_T$.

\section{IFE basis functions in the $\mathbf{t}_{i,h}$-$\mathbf{n}_h$ coordinate system \label{sec_app_basis}}
Using the $\mathbf{t}_{i,h}$-$\mathbf{n}_h$ coordinate system, we can greatly simplify the IFE basis function formulas derived in subsection~\ref{sec_Unisolvence}. Here, we only consider the three-dimensional case.
Obviously, adopting the $\mathbf{t}_{i,h}$-$\mathbf{n}_h$ coordinate system is equivalent to considering the case where $\Gamma_{h,T}$ is parallel to the plane $x_1$-$x_2$. Thus, we have 
\begin{equation*}
\mathbf{t}_{1,h}=(1,0,0), ~\mathbf{t}_{2,h}=(0,1,0), ~\mathbf{n}_{h}=(0,0,1). 
\end{equation*}
Let $\lambda_{i,T}\in P_1(T)$, $i=1,...,4$, be the standard basis functions such that $ \lambda_{i,T}(\mathbf{a}_{j,T})=\delta_{ij}$ for all $j=1,...,4$. Therefore, the standard basis functions $\boldsymbol{\phi}_{k,T}$, $k=1,...,12$, for velocity can be written as
\begin{equation*}
\boldsymbol{\phi}_{i,T}=(\lambda_{i,T},0,0), ~\boldsymbol{\phi}_{i+4,T}=(0,\lambda_{i,T},0), ~ \boldsymbol{\phi}_{i+8,T}=(0,0,\lambda_{i,T}),~i=1,...,4.
\end{equation*}
After substituting the aforementioned expressions into the explicit formulas derived in subsection~\ref{sec_Unisolvence} and using the definition of $\sigma(\mu,\mathbf{u},p)$, we obtain the following IFE basis functions for velocity:
\begin{equation*}
\begin{aligned}
\boldsymbol{\phi}_{i,T}^{\Gamma}&=(\lambda_{i,T}+k_T\frac{\partial\lambda_{i,T}}{\partial x_3}, ~0, ~0),~~i=1,...,4,\\
\boldsymbol{\phi}_{i+4,T}^{\Gamma}&=(0,~ \lambda_{i,T}+k_T\frac{\partial\lambda_{i,T}}{\partial x_3}, ~0),~~i=1,...,4,\\
\boldsymbol{\phi}_{i+8,T}^{\Gamma}&=(k_T\frac{\partial\lambda_{i,T}}{\partial x_1}, ~k_T\frac{\partial\lambda_{i,T}}{\partial x_2}, ~\lambda_{i,T}+2k_T\frac{\partial\lambda_{i,T}}{\partial x_3}),~~i=1,...,4,\\
\boldsymbol{\phi}_{i+12,T}^{\Gamma}&=(0,~0,~0),~~i=1,...,4,
\end{aligned}
\end{equation*}
where 
$$k_T= (\mu^-/\mu^+-1)(w_T-I_{h,T}w_T)\left(1+(\mu^-/\mu^+-1)\frac{\partial (I_{h,T}w_T)}{\partial x_3}\right)^{-1}$$
is  a constant on each interface element. For the pressure, we have 
\begin{equation*}
\begin{aligned}
\varphi_{i,T}^\Gamma&=0,~~i=1,...,4,\\
\varphi_{i+4,T}^\Gamma&=0,~~i=1,...,4,\\
\varphi_{i+8,T}^\Gamma&=2(\mu^--\mu^+)\frac{\partial \lambda_{i,T}}{\partial x_3}(z_T-I_{h,T}z_T), ~~i=1,...,4,\\
\varphi_{i+12,T}^\Gamma&=\lambda_{i,T},~~i=1,...,4.
\end{aligned}
\end{equation*}
\bibliographystyle{plain}
\bibliography{refer}

\end{document}